\definecolor{byzantine}{rgb}{0.74, 0.2, 0.64}
\definecolor{magenta}{rgb}{1.0, 0.0, 1.0}
\definecolor{islamicgreen}{rgb}{0.0, 0.56, 0.0}
\definecolor{ferrarired}{rgb}{1.0, 0.11, 0.0}
\definecolor{crimson}{rgb}{0.86, 0.08, 0.24}
\definecolor{applegreen}{rgb}{0.55, 0.71, 0.0}
\definecolor{ao}{rgb}{0.0, 0.5, 0.0}
\theoremstyle{plain} % style plain
\newtheorem{thm}{Theorem}[section]
\newtheorem{cor}[thm]{Corollary}
\newtheorem{prop}[thm]{Proposition}
\newtheorem{lem}[thm]{Lemma}
\newtheorem{conj}{Conjecture}
\theoremstyle{definition}
\newtheorem{defi}[thm]{Definition}
\newtheorem{remark}[thm]{Remark}
\newtheorem{ex}[thm]{Example}
\newcommand{\cone}{\operatorname{cone}}
\newcommand{\shi}{{\operatorname{Shi}}}
\newcommand{\cat}{{\operatorname{Cat}}}
\newcommand{\aff}{{\operatorname{Aff}}}
\author[N. Chapelier-Laget]{Nathan~Chapelier-Laget}
\address[Nathan Chapelier-Laget]{CNRS, Universit\'e de Tours\\
Institut Denis Poisson \\ Facult\'e des sciences et techniques, Parc de Grandmont\\
37200 Tours\\ France}
\email{nathan.chapelier@gmail.com}
\urladdr{https://www.nathanchapelier.fr/home}
\author[C. Hohlweg]{Christophe~Hohlweg}
\address[Christophe Hohlweg]{Universit\'e du Qu\'ebec \`a Montr\'eal\\
LaCIM et D\'epartement de Math\'ematiques\\ CP 8888 Succ. Centre-Ville\\
Montr\'eal, Qu\'ebec, H3C 3P8\\ Canada}
\email{hohlweg.christophe@uqam.ca}
\urladdr{http://hohlweg.math.uqam.ca}
\title{Shi arrangements and low elements in affine Coxeter groups}
\keywords{Coxeter groups, low elements, Shi arrangements, affine Weyl groups, affine Coxeter groups.}
\thanks{This work was supported by the NSERC  grant {\em algebraic and geometric combinatorics of Coxeter groups} held by the second author.}
\begin{document}
\date{\today}
\maketitle

\begin{abstract} Given an affine Coxeter group $W$, the corresponding Shi arrangement is a refinement of the corresponding Coxeter hyperplane arrangements that was introduced by Shi to study Kazhdan-Lusztig cells for $W$. Shi showed that each region of the Shi arrangement contains exactly one element of minimal length in $W$.  Low elements in $W$ were introduced to study the word problem of the corresponding Artin-Tits (braid) group and turns out to produce automata to study the combinatorics of reduced words in $W$. 

In this article we show, in the case of an affine Coxeter group, that the set of minimal length elements of the regions in the Shi arrangement is precisely the  set of low elements, settling a conjecture of Dyer and the second author in this case. As a byproduct of our proof, we show that the descent-walls -- the walls that separate a region from the fundamental alcove --  of any region in the Shi arrangement are precisely the descent walls of the alcove of its corresponding low element.
\end{abstract}

\setcounter{tocdepth}{1}
\tableofcontents

\section{Introduction}  Let $(W,S)$ be a Coxeter system with length function $\ell:W\to \mathbb N$.  Let $V$ be the $ \mathbb{R}$-vector space with basis the simple system $\Delta=\{\alpha_s~|~s \in S\}$.  Let $B$ be the symmetric bilinear form on $V$ defined by:
$$
 B(\alpha_s,\alpha_t)=  \left\{
                          	\begin{array}{ll}
 						  -\text{cos}(\frac{\pi}{m_{st}})  & \text{if}~~ m_{st} < \infty \\
  				      	~~~~-1      & \text{if}~~m_{st} = \infty.
					    \end{array}
					    \right.
$$
Denote by $O_B(V)$ the orthogonal group for the pair $(V,B)$. For each $s \in S$ we consider the reflection $ \sigma_s : V \rightarrow V$ by $\sigma_s(x) = x - 2B(\alpha_s,x)\alpha_s$. The map $\sigma :W \hookrightarrow O_B(V)$ defined by $s \mapsto \sigma_s$ is a \emph{geometrical representation} of $(W,S)$.  The orbit $\Phi=W(\Delta)$ is a root system of $(W,S)$, with  positive root system $\Phi^+=\cone(\Delta)\cap \Phi$, where $\cone(X)$ is the set of nonnegative  linear combinations of vectors in $X$. The {\em inversion set of $w\in W$} is the set 
$$
N(w)=\Phi^+\cap w(\Phi^-),
$$ 
where $\Phi^-=-\Phi^+$. It is well-known that $\ell(w)=|N(w)|$. 

\subsection{Tits cone and Coxeter arrangement}

The \emph{contragredient representation} $\sigma^*:W\to \text{GL}(V^*)$ of $\sigma$, is defined as follows: for $w \in W$ and $f \in V^*$ we have 
$wf := \sigma^*(w)(f) = f\circ \sigma(w^{-1}).$  For $\alpha \in \Phi$ we consider the following hyperplane $H_\alpha$ and  open half-space $H_\alpha^+$:
$$
H_{\alpha} := \{ f \in V^*~| ~f(\alpha)  = 0 \}~\text{~and~}~H_\alpha^+:= \{ f \in V^*~| ~f(\alpha)  > 0 \}.
$$
The intersection $C$ of all $H_\alpha^+$ for $\alpha \in \Delta$ is called the \emph{fundamental chamber}. Let $D =\overline{C}$ be the topological closure of $C$, the \emph{Tits cone} $ \mathcal{U}(W)$ of $W$ is the following cone in $V^*$: 
$$
~\mathcal{U}(W) := \bigcup\limits_{w \in W}wD.
$$ 

If $w \in W$, $wC$ is called a \emph{chamber} of $\mathcal{U}(W)$. The action of $W$ on $\{wC, w \in W \}$ is simply transitive;  the chambers of $\mathcal{U}(W)$  are therefore in bijection with the elements of $W$. The {\em fundamental chamber $C$} corresponds to the identity element $e$ of $W$.  The hyperplane arrangement $\mathcal A_{(W,S)}=\{H_{\alpha}~|~\alpha \in \Phi\}$ is called  the {\em Coxeter arrangement of $(W,S)$}; see \cite{Hu90} for more details.

\subsection{Small roots, Shi arrangement and minimal elements} The {\em Shi arrangement} of an affine Weyl group $(W,S)$ is an affine hyperplane arrangement introduced  by Shi in~\cite[Chapter 7]{Shi86} to study Kazhdan-Lusztig cells in affine Weyl groups of type $A$ and later extended to other affine types in~\cite{Shi88}; see also Fishel's survey~\cite{Fi19} for more information.

Shi arrangement are generalized to any Coxeter system as follows.
Introduced by Brink and Howlett~\cite{BrHo93} to prove the automaticity of Coxeter groups, the {\em dominance order}  is the partial order $\preceq_{\mathrm{dom}}$ on $\Phi^+$ defined by:
$$
\alpha \preceq_{\mathrm{dom}} \beta \Longleftrightarrow \forall w \in W, \beta \in N(w) \Longrightarrow \alpha \in N(w).
$$
We say that $\beta$ is a \emph{small root} if $\beta$ dominates no other root than itself. The set of small roots is denoted by $\Sigma(W,S)$. One of the remarkable results of Brink and Howlett in~\cite{BrHo93} is that the set $\Sigma(W,S)$ is finite (if $S$ is finite); we refer the reader to the book~\cite[Chapter 4]{BjBr05} for more details.

The \emph{Shi arrangement} of $W$ is the subarrangement of the Coxeter arangement $\mathcal A_{(W,S)}$ defined by 
$$
\shi(W,S) = \{ H_\alpha ~|~ \alpha\in \Sigma(W,S) \}.
$$ 
The \emph{Shi regions} of $W$ are the connected components of $\shi(W,S)$ in the Tits cone $\mathcal U(W)$, that is, the connected components of 
$$
\mathcal{U}(W) ~\backslash \bigcup_{\alpha \in \Sigma(W,S)} H_{\alpha}.
$$

Each Shi region is uniquely determined by a union of chambers in the Tits cone, leading to an equivalence relation $\sim_\shi$ on $W$ defined by:  $u \sim_\shi w$ if and only if~$uC$ and $wC$ are in the same Shi region.  Let us define
$$
L_\shi (W,S) := \{w\in W\mid u\sim_\shi w \implies \ell(w)\leq \ell(u) \}
$$

%%%%%
\subsection{Low elements}
A conjectural characterization of $L_\shi(W,S)$ is given in terms of {\em low elements}. Low elements were introduced by Dehornoy, Dyer and the second author~\cite{DDH14} in relation to Garside theory~\cite{Gars15} in the case of  Artin-Tits (braid) groups. 
Low elements are defined as follows: $w\in W$ is a {\em low element} if
$$
 N(w)=\cone(\Sigma(W,S)\cap N(w))\cap \Phi.
 $$ 
 
 Denote by $L(W,S)$ the set of low elements of $(W,S)$. Dyer and the second author show in \cite{DyHo16} that $L(W,S)$ is a finite {\em Garside shadow}, i.e., a subset of $W$ containing~$S$ and closed under taking suffixes and under taking joins in the right weak order. 
 
 The following conjecture is a reformulation  of~\cite[Conjecture 2]{DyHo16} in the case of low elements; see also \cite[\S3.6]{HoNaWi16}.

\begin{conj}\label{conj:Main} We have $L_\shi(W,S)=L(W,S)$. Moreover, any Shi region $\mathcal R$  contains a unique low element, which is the unique element of minimal length in~$\mathcal R$.  
\end{conj}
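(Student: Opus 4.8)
\emph{Proof strategy.} The idea is to split the conjecture into two inclusions, one of which is essentially formal, and to reduce the other to a geometric recursion for the minimal chamber of a Shi region that is special to the affine setting. For the formal part: the Shi region of $w$ is determined by the position of $wC$ relative to each hyperplane $H_\beta$, $\beta\in\Sigma(W,S)$, and since for $\beta\in\Phi^+$ one has $\beta\in N(w)$ exactly when $wC$ lies on the opposite side of $H_\beta$ from the fundamental chamber $C$, we get $u\sim_\shi w$ if and only if $\Sigma(W,S)\cap N(u)=\Sigma(W,S)\cap N(w)$. Recall the standard fact that inversion sets are closed, i.e.\ $T\subseteq N(u)$ implies $\cone(T)\cap\Phi\subseteq N(u)$. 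Hence if $w\in L(W,S)$ and $u\sim_\shi w$, then
$$
N(w)=\cone(\Sigma(W,S)\cap N(w))\cap\Phi=\cone(\Sigma(W,S)\cap N(u))\cap\Phi\subseteq N(u),
$$
so $\ell(w)=|N(w)|\leq|N(u)|=\ell(u)$. This gives $L(W,S)\subseteq L_\shi(W,S)$, and also shows that $N(w)$, hence $w$, is recovered from its Shi region, so each Shi region contains at most one low element. Everything therefore reduces to proving that the unique minimal length element $w_{\mathcal R}$ of each Shi region $\mathcal R$ — which exists by Shi's theorem — is a low element; equivalently, since $\cone(\Sigma(W,S)\cap N(w_{\mathcal R}))\cap\Phi\subseteq N(w_{\mathcal R})$ is automatic from closedness, that $N(w_{\mathcal R})\subseteq\cone(\Sigma(W,S)\cap N(w_{\mathcal R}))$.

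I would prove this by induction on $\ell(w_{\mathcal R})$, peeling off one chamber at a time. Write $w=w_{\mathcal R}$, let $s$ be a right descent of $w$, and set $\beta_s:=-w\alpha_s\in N(w)$, so that $N(ws)=N(w)\setminus\{\beta_s\}$ and $H_{\beta_s}$ is a wall of the alcove $wC$ separating it from $C$ — these are the \emph{descent walls} of $wC$. The easy observation is that every such $\beta_s$ is a small root: otherwise $\Sigma(W,S)\cap N(ws)=\Sigma(W,S)\cap N(w)$, so $ws\sim_\shi w$ with $\ell(ws)<\ell(w)$, contradicting the minimality of $w$ in $\mathcal R$; the same computation shows $H_{\beta_s}$ is one of the walls of the region $\mathcal R$ separating it from $C$. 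The heart of the matter is the converse, much harder statement: the minimal chamber $wC$ abuts \emph{every} wall of $\mathcal R$ that separates $\mathcal R$ from $C$, so that the descent walls of the alcove $wC$ are exactly those of the region $\mathcal R$ (the byproduct announced in the abstract), and, more precisely, one can choose a descent wall $H_{\beta_s}$ of $wC$ for which reflecting $\mathcal R$ across it produces the Shi region $\mathcal R'$ of $ws$ with $ws=w_{\mathcal R'}$ again minimal in $\mathcal R'$, necessarily with $\ell(ws)=\ell(w)-1$.

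Granting this recursion the induction closes at once: $\beta_s\in\Sigma(W,S)\cap N(w)$ by the easy observation, so $\Sigma(W,S)\cap N(ws)=(\Sigma(W,S)\cap N(w))\setminus\{\beta_s\}$; the induction hypothesis applied to $w_{\mathcal R'}=ws$ gives $N(ws)\subseteq\cone(\Sigma(W,S)\cap N(ws))\subseteq\cone(\Sigma(W,S)\cap N(w))$, whence $N(w)=N(ws)\cup\{\beta_s\}\subseteq\cone(\Sigma(W,S)\cap N(w))$, so $w_{\mathcal R}\in L(W,S)$. Combined with the first paragraph this yields $L_\shi(W,S)=L(W,S)$ and the uniqueness statement, proving the conjecture (and the descent-wall byproduct comes out of the recursion).

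The main obstacle is the descent-wall recursion of the second paragraph: a sufficiently explicit description of the minimal chamber of a Shi region, and of how it and its inversion set transform under reflection across a descent wall toward $C$. This is exactly where affineness is used. I expect the argument to go through the address $(k_\alpha)_{\alpha\in\Phi_0^{+}}$ of an alcove (the integers with $k_\alpha<\langle\,\cdot\,,\alpha\rangle<k_\alpha+1$ on it), the explicit list of small roots of an affine Coxeter group — those supported on the hyperplanes $\langle\,\cdot\,,\alpha\rangle\in\{0,1\}$, $\alpha\in\Phi_0^{+}$ — and Shi's combinatorial description of which alcoves are minimal in their region. The delicate point is that whenever a non-small inversion root of $w_{\mathcal R}$ is written as a positive combination of small roots supported on levels $0$ and $1$, these summands are again inversions of $w_{\mathcal R}$; it is this that pins the minimal chamber against all descent walls of $\mathcal R$ and makes the reflection land on another minimal chamber. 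I would anticipate that carrying this out requires either a case analysis over the irreducible affine types or a uniform argument with galleries in the affine Coxeter arrangement.
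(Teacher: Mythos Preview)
Your reduction is correct and matches the paper (Propositions~\ref{prop:LLshi} and~\ref{prop:ShiSmall}): everything comes down to $L_\shi\subseteq L$, and you are right that the descent-wall statement $ND_R(w)=\Sigma D(\mathcal R)$ (Theorem~\ref{thm:ShiLDes}) is the technical core.

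The genuine gap is the recursion itself. You induct on $\ell(w)$ by removing a \emph{right} descent and assert that one can choose $s\in D_R(w)$ with $ws$ again minimal in its Shi region. This does not follow from the descent-wall theorem: that theorem pins the alcove $w\cdot A_\circ$ against every descent wall of $\mathcal R$, but says nothing about whether the minimal alcove of the adjacent region $\mathcal R'$ touches the shared wall $H_{\beta_s}$ from the other side, which is precisely what $ws=w_{\mathcal R'}$ would require. The only closure property of $L_\shi$ available a priori is under removing a \emph{left} descent (Proposition~\ref{prop:ShiS}), not a right one, and that is the direction the paper uses.

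The paper's induction is on $\ell(u)$ in the coset decomposition $w=uv$ with $u\in W_0$, $v\in{}^0W$. The base case $u=e$ (dominant Shi regions) is already a nontrivial result, supplied by Cellini--Papi's bijection with root-poset ideals (Theorem~\ref{thm:CePa}, Corollary~\ref{cor:LLshiDom}). For $\ell(u)\ge 1$ one takes $s\in D_L(w)\cap S_0$; then $sw\in L_\shi$ by suffix-closure, hence $sw\in L$ by induction, so $N^1(sw)\subseteq\Sigma$. To get back to $w$ one uses $N^1(w)\subseteq\{\alpha_s\}\cup s(N^1(sw))$. Since $s$ permutes $\Sigma\setminus\{\alpha_s,\delta-\alpha_s\}$ (Proposition~\ref{prop:S0}), the only possible obstruction is $\delta-\alpha_s\in N^1(sw)$, and this is excluded by combining the descent-wall theorem with the rank-$2$ sign-type criterion of Lemma~\ref{lem:Key1} (Corollary~\ref{lem:1}). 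So the descent-wall theorem is used, but not to drive a right-descent recursion on $L_\shi$; it is used to control which small roots can appear in $N^1(sw)$ when peeling off on the \emph{left}.
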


 Both statements in the above conjecture are equivalent, see \S\ref{ss:ShiSmall}. This conjecture implies also, if true, that the set $L_\shi(W,S)$ is a Garside shadow, since the set of low elements is a Garside shadow, as shown in~\cite{DyHo16}.  In~\cite{HoNaWi16}, the authors show that any Garside shadow is the set of states of an automaton that recognizes the language of reduced words for $(W,S)$. They also show that  the Brink-Howlett automaton, whose states are indexed by $L_\shi(W,S)$, projects onto the Garside shadow automaton associated to $L(W,S)$. 
 
 Therefore Conjecture~\ref{conj:Main} is known to be true in the cases where the Brink-Howlett automaton is minimal. For instance, this conjecture is true for Coxeter systems with complete Coxeter graphs and right-angled Coxeter groups, see \cite[Theorem 1.3]{HoNaWi16}; all cases of a minimal Brink-Howlett automaton are characterized by Parkinson and Yau in~\cite{PaYa19}. However, the  sole affine Weyl groups  for which the Brink-Howlett automaton is minimal are of type $A$.  Recently, Charles~\cite{Charles21} proved Conjecture~\ref{conj:Main} for any rank~3 Coxeter system. 

%%%%
\subsection{Main results} In this article we prove Conjecture~\ref{conj:Main} for any affine Coxeter systems.   
 
\begin{thm}\label{thm:main} If $(W,S)$ is an affine Coxeter system, then $L_\shi(W,S)=L(W,S)$. 
\end{thm}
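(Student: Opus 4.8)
The plan is to prove the nontrivial inclusion $L_\shi(W,S)\subseteq L(W,S)$; the reverse inclusion holds for any Coxeter system and should be recorded first. Indeed, if $w$ is low and $u\sim_\shi w$, then $\Sigma(W,S)\cap N(u)=\Sigma(W,S)\cap N(w)$ (the two elements lie on the same side of $H_\alpha$ for every small root $\alpha$), so, using that inversion sets are closed (if $\gamma=a\alpha+b\beta\in\Phi$ with $\alpha,\beta\in N(u)$ and $a,b\ge 0$ then $\gamma\in N(u)$), one gets $N(w)=\cone(\Sigma(W,S)\cap N(w))\cap\Phi=\cone(\Sigma(W,S)\cap N(u))\cap\Phi\subseteq N(u)$, hence $\ell(w)\le\ell(u)$; moreover $w$ is then the \emph{unique} minimal length element of its region. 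So it remains to show: for an affine Coxeter system, the minimal length element $w_{\mathcal R}$ of every Shi region $\mathcal R$ is low.

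Next I would pass to the standard affine picture, realizing $W$ as a group of affine isometries of a Euclidean space whose chambers are the alcoves, with $A$ the fundamental alcove and $N(w)$ read off as the set of positive affine roots whose hyperplanes separate $wA$ from $A$. There the Shi hyperplanes are exactly the $H_{\alpha,0}$ and $H_{\alpha,1}$ for $\alpha$ a positive root of the underlying finite root system $\Phi_0$, and I would check that $\Sigma(W,S)=\Phi_0^+\cup\{\delta-\alpha:\alpha\in\Phi_0^+\}$, where $\delta$ is the imaginary root; consequently $\Sigma(W,S)\cap N(w_{\mathcal R})$ is precisely the set of roots of the Shi hyperplanes separating $\mathcal R$ from the fundamental Shi region $\mathcal R_0$. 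The first, easy, observation is that every descent wall of $w_{\mathcal R}A$ — a wall separating it from $A$ — must lie on a Shi hyperplane: crossing such a wall shortens the element, so by minimality it must leave $\mathcal R$, hence it bounds $\mathcal R$ and lies on a Shi hyperplane. In particular the descent roots of $w_{\mathcal R}$ lie in $\Sigma(W,S)\cap N(w_{\mathcal R})$.

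The heart of the argument is the reverse containment $N(w_{\mathcal R})\subseteq\cone(\Sigma(W,S)\cap N(w_{\mathcal R}))$ (the other containment being again closedness of $N(w_{\mathcal R})$). Writing $f_\gamma$ for the affine functional attached to a positive affine root $\gamma$, normalized to be positive on $A$, and $P$ for the set of Shi hyperplanes separating $\mathcal R$ from $\mathcal R_0$, this containment is equivalent, by a Farkas-type duality, to the assertion that the polyhedron $\{x:f_H(x)\ge 0\ \text{for all}\ H\in P\}$ is contained in $\{x:f_\gamma(x)\ge 0\}$ for every $\gamma\in N(w_{\mathcal R})$ — that is, the inversion hyperplanes of $w_{\mathcal R}A$ that are not Shi hyperplanes impose no constraint beyond those of $P$. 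I would prove this using the minimality of $w_{\mathcal R}A$: if some inversion hyperplane $H_\gamma$ of $w_{\mathcal R}A$ violated the inclusion, there would be a point on the fundamental side of every hyperplane of $P$ yet strictly beyond $H_\gamma$, and one could then slide $w_{\mathcal R}A$ across non-Shi walls toward this point (equivalently toward $A$) to produce a strictly shorter alcove still inside $\mathcal R$, a contradiction. In purely root-theoretic terms this amounts to showing that, for the minimal alcove, every inversion $\gamma$ decomposes as a nonnegative combination of the small roots lying in $N(w_{\mathcal R})$, obtained by iteratively splitting the finite part of $\gamma$ through the root poset of $\Phi_0$ while controlling which translated hyperplanes separate $w_{\mathcal R}A$ from $A$. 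Making this sliding/decomposition rigorous — i.e.\ pinning down exactly which walls of $\mathcal R$ are ``seen'' by the minimal alcove — is the main obstacle; I expect it to go hand in hand with the byproduct that the walls of $\mathcal R$ facing $\mathcal R_0$ are precisely the walls of $w_{\mathcal R}A$ facing $A$, so I would aim to establish both statements together by the same gallery/folding analysis, after first reducing to an irreducible affine component.

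Putting the pieces together gives $N(w_{\mathcal R})=\cone(\Sigma(W,S)\cap N(w_{\mathcal R}))\cap\Phi$, i.e.\ $w_{\mathcal R}$ is low; hence $L_\shi(W,S)\subseteq L(W,S)$, which with the first paragraph yields $L_\shi(W,S)=L(W,S)$, each Shi region containing a unique low element (its minimal length element), and the stated description of descent-walls emerges as a byproduct of the crux step.
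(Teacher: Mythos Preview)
Your setup is right: the inclusion $L\subseteq L_\shi$ is the easy direction, the affine description of $\Sigma$ is correct, and you correctly observe that every descent wall of the minimal alcove $w_{\mathcal R}A$ is a Shi hyperplane (this is the paper's Proposition~\ref{prop:LLLshi}). You also correctly anticipate that the descent-wall statement $ND_R(w_{\mathcal R})=\Sigma D(\mathcal R)$ is central.

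However, the ``heart of the argument'' has two genuine gaps.

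\smallskip
\textbf{The Farkas reduction is off by a $\delta$.} The map $\gamma=\beta+k\delta\mapsto f_\gamma$, $f_\gamma(x)=\langle\beta,x\rangle+k$, sends $\delta$ to the constant function~$1$. Affine Farkas says that the polyhedron $\{f_\beta\ge 0:\beta\in\Sigma(w)\}$ is contained in $\{f_\gamma\ge 0\}$ if and only if $f_\gamma=\sum c_\beta f_\beta+c_0$ with $c_\beta,c_0\ge 0$; translated back, this is $\gamma\in\cone(\Sigma(w)\cup\{\delta\})$, not $\gamma\in\cone(\Sigma(w))$. These differ whenever $\delta\notin\cone(\Sigma(w))$, for instance for any dominant region other than the trivial one: with $\Sigma(w)=\{\delta-\alpha_0\}$ the polyhedron condition holds for $2\delta-\alpha_0$, yet $2\delta-\alpha_0\notin\cone(\{\delta-\alpha_0\})$. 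So even a successful polyhedron argument would not yield $N(w)\subseteq\cone(\Sigma(w))$.

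\smallskip
\textbf{The sliding argument contradicts your own observation.} You have already shown that every descent wall of $w_{\mathcal R}A$ is a Shi wall. Hence crossing any \emph{non}-Shi wall of $w_{\mathcal R}A$ is an ascent, increasing length by~$1$; one cannot ``slide across non-Shi walls\ldots\ to produce a strictly shorter alcove still inside $\mathcal R$''. Any gallery from $w_{\mathcal R}A$ that actually shortens must begin by crossing a Shi wall and therefore leaves $\mathcal R$ immediately. The existence of a point $p$ with $f_\beta(p)\ge 0$ for $\beta\in\Sigma(w)$ but $f_\gamma(p)<0$ does not place $p$ (or its alcove) in $\mathcal R$ either, since the remaining Shi hyperplanes (those in $\Sigma\setminus\Sigma(w)$) are uncontrolled.

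\smallskip
\textbf{How the paper proceeds instead.} The paper does not attempt a direct convexity argument. It writes $w=uv$ with $u\in W_0$, $v\in{}^0W$, and inducts on $\ell(u)$. The base case ($u=e$, i.e.\ dominant regions) is handled by invoking Cellini--Papi: their description of $N(w_\Psi)$ for ideals $\Psi$ of the root poset shows directly that each dominant minimal element is low (Corollary~\ref{cor:LLshiDom}). For the inductive step one picks $s\in D_L(u)\cap S_0$, so $sw\in L$ by induction; the crux is then to show $N^1(sw)\subseteq\Sigma\setminus\{\alpha_s,\delta-\alpha_s\}$, whence $N^1(w)\subseteq\{\alpha_s\}\cup s(N^1(sw))\subseteq\Sigma$. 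The delicate point---ruling out $\delta-\alpha_s\in N^1(sw)$---uses the descent-wall theorem $ND_R(w)=\Sigma D(\mathcal R)$, whose proof in turn is a careful analysis of how admissible sign types transform under $s\in S_0$, reduced via Shi's Theorem~\ref{thm:Shi} to a finite rank-$2$ verification (Tables~\ref{tab1} and~\ref{tab2}). In short, the missing ingredients in your plan are the Cellini--Papi input for the dominant base case and the sign-type combinatorics that makes the induction go through; a purely convex-geometric/Farkas route does not seem to close.
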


\begin{cor} If $(W,S)$ is an affine Coxeter system, then $L_\shi(W,S)$ is a Garside shadow for $(W,S)$. 
\end{cor}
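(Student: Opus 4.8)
Since Theorem~\ref{thm:main} is available, the Corollary requires essentially no further work, and the plan is simply to transfer the Garside shadow property from $L(W,S)$ to $L_\shi(W,S)$ along the equality provided by the theorem. Recall that a subset $G\subseteq W$ is a \emph{Garside shadow} if $S\subseteq G$, if $G$ is closed under taking suffixes, and if $G$ is closed under taking joins in the right weak order. By the theorem of Dyer and the second author from~\cite{DyHo16} recalled in the introduction, the set $L(W,S)$ of low elements is a finite set satisfying these three conditions. For an affine Coxeter system, Theorem~\ref{thm:main} asserts the equality of subsets $L_\shi(W,S)=L(W,S)$ of $W$; feeding this equality into each of the three defining conditions for $L(W,S)$ shows at once that $L_\shi(W,S)$ contains $S$, is suffix-closed, and is closed under right weak order joins, hence is a (finite) Garside shadow. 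This deduction presents no obstacle.

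Consequently the entire content sits in Theorem~\ref{thm:main}, and it is worth sketching how I would prove that, since that is the statement with genuine difficulty. I would work in the alcove model for the affine Coxeter system $(W,S)$: $W$ acts simply transitively on the alcoves of the affine hyperplane arrangement, the chambers of the Tits cone correspond to alcoves, $N(w)$ is read off from the set of affine hyperplanes separating the fundamental alcove $A_0$ from the alcove $wA_0$, and $\shi(W,S)$ is cut out by the small roots, which in the affine setting are tightly linked to the hyperplanes $H_{\alpha,0}$ and $H_{\alpha,1}$ for $\alpha$ a root of the underlying finite system. One direction of the low condition is automatic: inversion sets are closed under taking positive roots lying in the cone they generate, so $\cone(\Sigma(W,S)\cap N(w))\cap\Phi\subseteq N(w)$ for every $w$; hence being low is exactly the reverse inclusion, that every inversion of $w$ is a nonnegative combination of small inversions of $w$.

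With that in hand, the strategy for Theorem~\ref{thm:main} would have two halves. First, given a Shi region $\mathcal R$, Shi's theorem provides a unique minimal length element $w\in\mathcal R$, and I would show $w$ is low by identifying its small inversions with the small roots whose hyperplanes are \emph{descent walls} of $\mathcal R$ (the walls separating $\mathcal R$ from the fundamental alcove), and proving that those small inversions positively span all of $N(w)$. This is precisely where the descent-walls statement advertised in the abstract is needed: one must show that the descent walls of the region $\mathcal R$ coincide with the descent walls of the alcove $wA_0$ of its minimal element. Second, conversely, given a low element $w$ one shows it is the minimal length element of its own Shi region, essentially because the low condition forces $wA_0$ to be the alcove of its region closest to $A_0$. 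The main obstacle is the descent-walls coincidence: it demands careful control of the affine geometry of $\shi(W,S)$ --- using the translation lattice and the associated finite Weyl group --- and is exactly the feature that makes the affine hypothesis essential, the analogous statement in full generality being the still-open Conjecture~\ref{conj:Main}.
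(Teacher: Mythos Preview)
Your proof is correct and matches the paper's approach: the paper gives no explicit proof of this corollary, treating it as immediate from Theorem~\ref{thm:main} together with the fact (from~\cite{DyHo16}) that $L(W,S)$ is a Garside shadow, which is exactly what you do. Your additional sketch of the strategy for Theorem~\ref{thm:main} is also broadly consistent with the paper's actual proof (descent-wall theorem plus an induction reducing to the dominant region), though it is not needed for the corollary itself.
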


The above corollary shows that  the Brink-Howlett automaton is the Garside shadow automaton, in the sense of~\cite{HoNaWi16},  associated to $L(W,S)$.

\smallskip
In the case of an (irreducible) affine Coxeter system $(W,S)$, Shi shows in \cite[Proposition 7.2]{Shi88} that each equivalence class under $\sim_\shi$ contains exactly one element of minimal length, which means,  by Theorem~\ref{thm:main}, that $|L(W,S)|=|L_\shi(W,S)|$ counts the number of regions of $\shi(W,S)$. Writing $(W_0,S_0)$ for the underlying finite Coxeter system of $(W,S)$, Shi provides the following remarkable formula in~\cite[Theorem 8.1]{Shi88}:
 $$
 |L_\shi(W,S)|= (h+1)^{n},
 $$
 where $n=|S_0|$ is the rank of $W_0$ and $h$ is the Coxeter number of $W_0$. This formula also appears in relation to diagonal invariants in the work of Haiman~\cite[\S7]{Ha94}: Haiman conjectured that there exist a $W_0$-stable quotient ring of the covariant ring $\mathbb C[V\oplus V^*]^{co\, W}$ of dimension $(h+1)^{n}$; this conjecture was confirmed by Gordon in~\cite{Go03}. As a corollary of~Theorem~\ref{thm:main} we obtain the enumeration of the set of low elements in the case of affine Coxeter systems.
  
 \begin{cor} If $(W,S)$ is an affine Coxeter system with underlying finite Weyl group $W_0$ of rank $n$ and Coxeter number $h$,  then the number of low elements in $(W,S)$ is $|L(W,S)| = (h+1)^{n}$.
\end{cor}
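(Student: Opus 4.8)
The plan is to deduce this statement directly from Theorem~\ref{thm:main} together with Shi's enumeration of the regions of the Shi arrangement, so there is essentially no new work to do. First I would reduce to the case where $(W,S)$ is irreducible affine: if $(W,S)$ decomposes as a direct product $(W_1,S_1)\times\cdots\times(W_k,S_k)$ of irreducible components, then the root system, the set of small roots $\Sigma(W,S)$, the inversion sets $N(w)$ and the dominance order all split along this decomposition, whence $L(W,S)=L(W_1,S_1)\times\cdots\times L(W_k,S_k)$; so it suffices to treat a single irreducible affine component, which is also the only case in which the invariants $n$ and $h$ of $W_0$ are literally defined.

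Assume then that $(W,S)$ is irreducible affine, with underlying finite Weyl group $W_0$ of rank $n$ and Coxeter number $h$. By Theorem~\ref{thm:main} we have $L(W,S)=L_\shi(W,S)$, so it is enough to compute $|L_\shi(W,S)|$. Recall that, by definition, $L_\shi(W,S)$ is the set of \emph{all} elements of $W$ that attain the minimal length inside their $\sim_\shi$-class. The key input is Shi's Proposition~7.2 in~\cite{Shi88}, which asserts that each $\sim_\shi$-equivalence class — equivalently, each region of $\shi(W,S)$ — contains exactly one element of minimal length. Hence the map sending a Shi region to its unique minimal length element is a bijection from the set of regions of $\shi(W,S)$ onto $L_\shi(W,S)$, so that $|L_\shi(W,S)|$ equals the number of regions of $\shi(W,S)$.

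It then remains only to invoke Shi's count: by~\cite[Theorem 8.1]{Shi88}, the Shi arrangement $\shi(W,S)$ has exactly $(h+1)^{n}$ regions. Chaining the identities gives $|L(W,S)|=|L_\shi(W,S)|=(h+1)^{n}$, as claimed; in the reducible case one multiplies these equalities over the irreducible components. The only point worth a moment's care — and the only reason the argument is not a one-line tautology given Theorem~\ref{thm:main} — is that $L_\shi(W,S)$ is defined as the set of \emph{all} minimal length representatives, not as a single chosen representative per region, so one genuinely needs the uniqueness part of Shi's Proposition~7.2 (not merely the fact that length-minimization surjects onto the regions) in order to identify $|L_\shi(W,S)|$ with the number of Shi regions.
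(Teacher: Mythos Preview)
Your proposal is correct and follows exactly the approach the paper indicates: combine Theorem~\ref{thm:main} with Shi's uniqueness result \cite[Proposition~7.2]{Shi88} to identify $|L(W,S)|$ with the number of Shi regions, and then invoke Shi's enumeration \cite[Theorem~8.1]{Shi88}. The paper does not spell out a separate proof of this corollary beyond the paragraph preceding its statement, so your write-up is in fact more detailed than what appears there.
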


\subsection{Outline of the article} The proof of Theorem~\ref{thm:main} can be easily reduced to the case of irreducible affine Coxeter systems by standard technics; see for instance \cite{DyLe11}. Therefore, all Coxeter systems considered in this article are irreducible.

In order to prove Theorem~\ref{thm:main}, we  first recollect in~\S\ref{se:Affine} well-known facts about affine Coxeter systems and affine Weyl groups. In~\S\ref{se:Low}, we survey the notions of  low elements.  In \S\ref{se:Shi}, we survey Shi arrangements and  Shi's admissible signs.  In particular, in \S\ref{ss:LowShi} we show that $L_\shi(W,S)$ is closed under taking suffixes (Proposition~\ref{prop:ShiS}), and in \S\ref{ss:ShiSmall} we reduce the proof of Theorem~\ref{thm:main} to showing that $L_\shi(W,S)\subseteq L(W,S)$. 

Then we tackle the core of the proof of Theorem~\ref{thm:main} in \S\ref{se:Signs} to \S\ref{se:Proof}. Firstly,  we characterize the so-called {\em descent-walls} of a Shi region $\mathcal R$, that is,  the walls that separate $\mathcal R$ from the fundamental alcove (see Definition \ref{descent wall/root}); this characterization is given in terms of Shi's admissible sign types and Shi coefficients (Proposition~\ref{cor:Key1}). This leads to the first key result, which is Lemma~\ref{lem:Key1}, that describes some descent-walls of a Shi region. 

 Secondly, we observe that Cellini and Papi's set of minimal elements, which  appears in their works~\cite{CePa00,CePa02} on $ad$-nilpotent ideals of a Borel subalgebra, is precisely the set $L^0(W,S)$ of low elements in the dominant region of $\mathcal A(W,S)$, i.e., the fundamental region of the Coxeter arrangement of $(W_0,S_0)$ (Corollary~\ref{cor:LLshiDom}). This cover the initial case of our proof by induction. In order to conclude our inductive proof, we prove our second key result, the {\em descent-wall theorem}: the descent-walls of $w\in L_\shi$ are precisely those of its corresponding Shi region (Theorem~\ref{thm:ShiLDes}). We conclude the proof of Theorem~\ref{thm:main} in~\S\ref{ss:ProofMain}.

The connection with Cellini and Papi's results implies in particular the following corollary where the second equality is already known via the last term,  understood \cite[Theorem 1]{CePa02} as the number of ad-nilpotent ideals of a Borel subalgebra $\mathfrak{b}$ of a simple Lie algebra $\mathfrak{g}$.

\begin{cor} If $(W,S)$ is an irreducible affine Coxeter system with underlying finite Weyl group $W_0$, then the number of low elements in the fundamental chamber of $\mathcal A(W,S)$ is the $W_0$-Catalan number:
$$
|L^0(W,S)|=\cat(W_0) = \frac{1}{|W_0|}\prod_{i=1}^n (h+e_i+1),
$$
where $e_1,\dots,e_n$ are the exponents of $W_0$.
\end{cor}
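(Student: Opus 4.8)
The plan is to deduce this corollary from the identification, announced in the outline and to be established in the body of the paper, of the set $L^0(W,S)$ of low elements lying in the fundamental chamber of $\mathcal A(W,S)$ with Cellini and Papi's set of minimal elements attached to $ad$-nilpotent ideals of a Borel subalgebra $\mathfrak b$ of the simple Lie algebra $\mathfrak g$ of the same type as $W_0$ (Corollary~\ref{cor:LLshiDom}). Once that bijection is in place, the numerology is entirely borrowed: Cellini and Papi prove in~\cite[Theorem~1]{CePa02} that the number of $ad$-nilpotent ideals of $\mathfrak b$ equals $\prod_{i=1}^n \frac{h+e_i+1}{|W_0|}$ after pairing the count with the $W_0$-Catalan number, so the second equality in the statement is literally their result transported through Corollary~\ref{cor:LLshiDom}. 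Thus the first thing I would do is invoke Corollary~\ref{cor:LLshiDom} to get $|L^0(W,S)|$ equal to the number of $ad$-nilpotent ideals, and then cite~\cite{CePa02} for the product formula.

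For the first equality, $|L^0(W,S)| = \cat(W_0)$, I would argue that the number of $ad$-nilpotent ideals of $\mathfrak b$ is the $W_0$-Catalan number. This is the classical fact that the $ad$-nilpotent ideals of a Borel are enumerated by the antichains in the root poset of $\Phi_0^+$, equivalently by the dominant regions of the Shi arrangement of $W_0$ that Shi and later Cellini–Papi, Athanasiadis, and Sommers showed number $\cat(W_0)=\frac{1}{|W_0|}\prod_{i=1}^n(h+e_i+1)$. Alternatively, and more in keeping with the viewpoint of the present paper, I would note that $L^0(W,S)$ is exactly the set of minimal length elements of the Shi regions contained in the dominant chamber of $\mathcal A(W,S)$ — i.e., the dominant Shi regions — and that these are counted by $\cat(W_0)$; this is Shi's count of dominant regions combined with the bijection between regions and their minimal elements. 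Either route gives the product formula with no new computation.

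The main (and essentially only) obstacle is the identification $L^0(W,S) = \{\text{Cellini–Papi minimal elements}\}$ of Corollary~\ref{cor:LLshiDom}, but that is a result the excerpt instructs me to assume, proved independently in the body of the paper by matching the defining inequalities. Granting it, the proof of the present corollary is a two-line chain: apply Corollary~\ref{cor:LLshiDom}, then quote~\cite[Theorem~1]{CePa02} (and, for the first equality, the standard $\cat(W_0)$ formula for the number of $ad$-nilpotent ideals / dominant Shi regions). I would not expect any technical difficulty beyond being careful that the conventions for $h$, the exponents $e_i$, and the identification of the finite Weyl group underlying the irreducible affine system all match those used in~\cite{CePa02}; this is routine bookkeeping.
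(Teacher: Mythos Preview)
Your proposal is correct and matches the paper's approach exactly: the paper derives this corollary directly from Corollary~\ref{cor:LLshiDom}(iv), which gives $|L^0|=|L_\shi^0|=|\mathcal I(\Phi_0)|=\cat(\Phi_0)$, together with the product formula from \cite[Theorem~1]{CePa02} for the number of ad-nilpotent ideals. No additional argument is supplied or needed beyond what you outline.
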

For a survey of the history of Coxeter-Catalan numbers, we refer the reader to~\cite[p.181]{At05}. For more details on Coxeter numbers and exponents see \cite[Chapter 3]{Hu90}. 

%%%
%\subsection{Further works}
%
%In the case of arbitrary Coxeter systems, no notions of admissible sign types is known; Conjecture~\ref{conj:Main} is therefore wide open. We do not  know in general if there is a unique smallest (length) element in each Shi region.
%
%Conjecture~\ref{conj:Main} is  a particular case of the more general conjecture~\cite[Conjecture 2]{DyHo16} involving $m$-Shi arrangements and $m$-low elements, for $m\in\mathbb N$; the case $m=0$ being the case of low elements and Shi arrangements. This more general conjecture is open even for affine Coxeter systems. The set of $m$-low elements form  a Garside shadow, as recently proven by Dyer in~\cite{Dy21}, and are therefore the states of a Garside shadow automaton; see~\cite{HoNaWi16}.  Automata for $m$-Shi arrangements are discussed for instance in \cite[\S3.2]{DyHo16} and \cite{HoNaWi16}.  A possible approach to study~\cite[Conjecture 2]{DyHo16} in general could be provided by the recent work of Parkinson and Yau~\cite{PaYa21} in which they describe a nice geometrical framework to study the set of $m$-low elements and the $m$-Shi arrangement together with their automata.

%%%%%%%%
\section{Affine Coxeter systems}\label{se:Affine}

For a Euclidean vector space $V_0$ with inner product $\langle\cdot,\cdot\rangle$,  we denote by $O(V_0)$ the group of isometries on $V_0$ and $\aff(V_0)$ the group of affine isometries on~$V_0$.  The aim of this section is to survey some well-known facts on affine Coxeter systems along the lines of~\cite[\S3]{DyLe11}; see also \cite[Chapters 6 \& 7]{Kac90}.

%%%%
\subsection{Weyl group} \label{ss:CRS}

Let $\Phi_0$ be an irreducible crystallographic root system in $V_0$ with simple system $\Delta_0$; set $n=|\Delta|$. Let $\Phi_0^+$ be the associated positive root system. The root poset on $\Phi_0^+$ is the poset $(\Phi_0^+,\preceq)$ defined by $\alpha\preceq \beta$ if and only if $\beta-\alpha \in\cone(\Delta_0)$.

For any $\alpha \in \Phi_0$ let $\alpha^\vee=\frac{2}{\langle\alpha, \alpha \rangle}\alpha$ be the coroot of $\alpha$.  The reflection $s_\alpha$ associated to $\alpha$ is defined by
$$
\begin{array}{ccccc}
s_{\alpha}  & : & V_0 & \longrightarrow & V_0 \\
                 &   & x& \longmapsto     & x-\langle \alpha, x \rangle\alpha^\vee.
\end{array}
$$

Notice that for all $\alpha \in \Phi_0$ we have 
\begin{align}\label{2}
\langle \alpha^{\vee}, \alpha \rangle = \langle \frac{2\alpha}{\langle\alpha, \alpha \rangle}, \alpha \rangle = 2.
\end{align}

The set of fixed points of $s_\alpha$ is the hyperplane   $H_\alpha:=\{x\in V_0 \mid \langle x,\alpha\rangle = 0\}$. Let $W_0$ be the Weyl group associated to $\Phi_0$. We identify the root lattice $Q:= \mathbb{Z}\Phi_0$ and the coroot lattice $Q^{\vee} := \mathbb{Z}\Phi_0^{\vee}$ with their group associated translations and we denote by $\tau_x$ the translation corresponding to $x \in Q^{\vee}$.  With $S_0 = \{s_{\alpha}\mid \alpha\in\Delta_0\}$,  the pair $(W_0,S_0)$ is a finite Coxeter system.  The set of reflections  $T_0$ of $W_0$ is:
 $$
 T_0 = \bigcup\limits_{w \in W_0}wS_0w^{-1}.
 $$
The \emph{fundamental chamber} of $W_0$, also called the \emph{dominant region}, is  the set: $$C_\circ := \{x \in V_0~|~\langle x,\alpha \rangle > 0, ~\forall \alpha \in \Delta_0\}.$$

\subsection{Affine Weyl group and affine Coxeter arrangement}\label{affine cox arrangement} 

The affine reflection $s_{\alpha,k} \in \text{Aff}(V_0)$ is defined by:
\begin{align}\label{shi notation reflection}
 s_{\alpha,k}(x) &=x-(\langle \alpha, x \rangle-k)\alpha^{\vee} = \tau_{k\alpha^{\vee}}s_{\alpha}.
\end{align}

The {\em affine Weyl group} $W$  associated to $\Phi_0$ is the following discrete reflection group of affine isometries: 
$$
W = \langle s_{\alpha,k}~|~\alpha \in \Phi_0, ~k \in \mathbb{Z \rangle} = Q^{\vee} \rtimes W_0.
$$

Let $\alpha_0$ be the highest root of $\Phi_0$. The set $ S := S_0 \cup \{s_{\alpha_0,1}\}$  is a set of simple reflections for $W$. The set of fixed points of the reflection $s_{\alpha,k}$ is the affine hyperplane 
\begin{align*}
H_{\alpha,k}  = \{ x \in V~|~ \langle x, \alpha \rangle = k\}.
\end{align*}
The collection of hyperplanes $H_{\alpha,k}$,  for $ \alpha \in \Phi_0$ and $k \in \mathbb{Z}$,  is denoted by $\mathcal{A}(W,S)$ and is called the \emph{affine Coxeter arrangement} of $(W,S)$.  An \emph{alcove} of $(W,S)$ in $V_0$ is a connected component of  
$$
V_0 ~\backslash \bigcup\limits_{H \in \mathcal{A}(W,S)} H.
$$ 
The group $W$ acts transitively on the set of alcoves, i.e., the map $w\mapsto w\cdot A_\circ$ is a bijection between $W$ and the set of alcoves of $(W,S)$ in $V_0$ where $A_\circ$ is the alcove associated to the identity element.

\begin{remark} The affine Coxeter arrangement is in fact the intersection of the usual Coxeter arrangement with an affine hyperplane in the Tits cone. The fundamental alcove is the intersection of the fundamental chamber with this affine hyperplane. For more details, see  \cite[\S6.5]{Hu90}, \cite[Proof of Proposition 3]{DyLe11} or \cite[\S1.3.5]{Ch21}.
\end{remark}

%%%%%%
\subsection{Shi coefficients and Shi parameterization} \label{para coef k}

In \cite[Proposition 5.1]{Shi87}, Shi describes a correspondence 
$
w \mapsto (k(w,\alpha))_{\alpha\in\Phi_0^+}
$
between $W$ (or equivalently  the set of alcoves of $(W,S)$) and some $\Phi_0^+$-tuples over $\mathbb Z$.   We refer to some of Shi's results from~\cite{Shi87,Shi88}. However, there are three differences in the conventions between Shi's articles and our article:
\begin{enumerate}
\item In Shi's articles, $W$ is denoted by $W_a$ and the underlying Weyl group is denoted by $W$.  

\item In Shi's articles, the root system of the affine Coxeter system $(W,S)$ is associated to $\Phi_0^\vee$ instead of $\Phi_0$ in this article. Since $(\Phi_0^\vee)^\vee = \Phi_0$, it is enough to apply the involution $\alpha\mapsto \alpha^\vee$ when applying Shi's results. 

\item The action of $W$ on the root system is from the right in Shi's articles, where we adopt the convention of acting by the left; for more details see \cite[Remark 2.1]{Ch20}. 
\end{enumerate}

By convention, we have the following formula~ \cite[\S1]{Shi87} relating Shi coefficients on negative roots and Shi coefficients on positive roots:
\begin{align}\label{neg shi coeff}
k(w,-\alpha) = -k(w,\alpha) ~\mathrm{~~for ~~any~} \alpha \in \Phi_0.
\end{align}

Now the following statements give very important relations between Shi coefficients on positive roots.  

\begin{lem}[{\cite[Lemma 3.1]{Shi87}}]  \label{k groupe fini}
Let $w$ be an element of $W_0$ and $\alpha \in \Phi^+$. Then 
\begin{align*}
k(w,\alpha) =  \left\{
                          	\begin{array}{rl}
 						0      & \text{if}~~ \alpha \notin N(w) \\
  				      	-1      & \text{if}~~\alpha \in N(w).
					    \end{array}
					    \right.
\end{align*}
\end{lem}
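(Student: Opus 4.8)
The plan is to argue geometrically, via Shi's description of the alcove $A_w:=w\cdot A_\circ$ attached to $w$. Recall that for $v\in W$ and $\alpha\in\Phi_0^+$, the coefficient $k(v,\alpha)$ is the unique integer $m$ with $m<\langle x,\alpha\rangle<m+1$ for all $x\in A_v$; since an alcove never meets a hyperplane $H_{\alpha,m}=\{\langle\cdot,\alpha\rangle=m\}$ of $\mathcal A(W,S)$, this integer is simply $\lfloor\langle x,\alpha\rangle\rfloor$ for any $x\in A_v$.

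First I would record two facts about the fundamental alcove: $A_\circ\subseteq C_\circ$ (indeed $A_\circ=C_\circ\cap\{x\mid\langle x,\alpha_0\rangle<1\}$), and $0\in\overline{A_\circ}$, the origin being a vertex of $A_\circ$. Since $w\in W_0$ fixes $0$ and acts orthogonally on $V_0$, we get $A_w=w\cdot A_\circ\subseteq w\cdot C_\circ$ and $0\in\overline{A_w}$.

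Now fix $\alpha\in\Phi_0^+$. As $x\mapsto\langle x,\alpha\rangle$ is continuous, takes values in $(k(w,\alpha),k(w,\alpha)+1)$ on $A_w$, and $0\in\overline{A_w}$, we must have $0\in[k(w,\alpha),k(w,\alpha)+1]$, so $k(w,\alpha)\in\{-1,0\}$. To pin down which one, I would use the sign of $\langle\cdot,\alpha\rangle$ on the chamber $w\cdot C_\circ$: writing $C_\circ=\{x\mid\langle x,\gamma\rangle>0\ \forall\gamma\in\Phi_0^+\}$, one has $\langle x,\alpha\rangle>0$ on $w\cdot C_\circ$ precisely when $w^{-1}\alpha\in\Phi_0^+$, that is, when $\alpha\notin N(w)$, and $\langle x,\alpha\rangle<0$ there precisely when $\alpha\in N(w)$. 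In the former case the interval $(k(w,\alpha),k(w,\alpha)+1)$ lies in $(0,\infty)$, which together with $k(w,\alpha)\in\{-1,0\}$ forces $k(w,\alpha)=0$; in the latter case it lies in $(-\infty,0)$, forcing $k(w,\alpha)=-1$. This is the asserted dichotomy.

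There is no real obstacle here; the only thing demanding care is aligning conventions — in particular verifying that in this paper's normalization the bounding hyperplanes of $A_\circ$ are $H_{\alpha,0}$ for $\alpha\in\Delta_0$ together with $H_{\alpha_0,1}$, so that indeed $0$ is a vertex of $A_\circ$ and $A_\circ\subseteq C_\circ$. A purely combinatorial alternative is an induction on $\ell(w)$ within $W_0$: the base case $w=e$ gives $k(e,\alpha)=0$ for all $\alpha$, matching $N(e)=\emptyset$; for the inductive step, write $w=sw'$ with $s\in S_0$ and $\ell(w')<\ell(w)$, and use the recursion $k(sw',\alpha)=k(w',s\alpha)$ for $\alpha\neq\alpha_s$ together with $k(sw',\alpha_s)=-k(w',\alpha_s)-1$ — both obtained from $A_{sw'}=s\cdot A_{w'}$ and $\langle s\cdot x,\alpha\rangle=\langle x,s\cdot\alpha\rangle$ — and then compare with the effect of left multiplication by $s$ on inversion sets to conclude.
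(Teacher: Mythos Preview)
Your argument is correct. The paper does not supply its own proof of this lemma; it simply cites \cite[Lemma~3.1]{Shi87}. So there is nothing in the paper to compare against beyond noting that your geometric argument is the natural one and is in the spirit of Shi's original treatment.

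A couple of minor remarks. First, your characterization of $k(v,\alpha)$ as the unique integer $m$ with $m<\langle x,\alpha\rangle<m+1$ on $A_v$ is not stated explicitly in this paper, but it is exactly Shi's definition in \cite{Shi87} and is consistent with Remark~\ref{rem:use}(2) here; since the lemma is attributed to \cite{Shi87} anyway, invoking that description is entirely appropriate. Second, in your inductive alternative, the recursions $k(sw',\alpha)=k(w',s\alpha)$ for $\alpha\neq\alpha_s$ and $k(sw',\alpha_s)=-k(w',\alpha_s)-1$ are precisely the content of Proposition~\ref{descente indice k}(1) combined with Eq.~(\ref{neg shi coeff}); you derive them directly from the action on alcoves, which is fine and avoids circularity, but it is worth noting that in the paper's logical order Proposition~\ref{descente indice k} is itself quoted from \cite{Shi87} without proof, so either route is acceptable. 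Your main geometric argument is cleaner and self-contained, and is the one to keep.
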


%\begin{lem}[\cite{Shi86}, Lemma 3.2]\label{proposition translation}
%Let $w$ be an element of $W$ and $x \in \mathbb{Z}\Phi_0$.  For all $\alpha \in \Phi$ one has  $k(\tau_xw, \alpha) = k(w,\alpha) + \langle x, \alpha \rangle.$ 
%\end{lem}

%\begin{prop}[{\cite[Proposition 3.2]{Ch20}}] \label{proposition k(sw,beta)}
%Let $w \in W$ and $\alpha, \beta \in \Phi_0^+ $.  We have
% $$
%k(s_{\alpha}w,\beta) = 
%\left\{
%\begin{array}{rl}
   %k(w,s_{\alpha}(\beta)) & \text{if}~~ s_{\alpha}(\beta) \in \Phi_0^{+} \\
  % k(w,s_{\alpha}(\beta))-1 & \text{if}~~ s_{\alpha}(\beta) \in \Phi_0^{-}.
%\end{array}
%\right.
%$$
%\end{prop}

The following proposition is in general only stated for positive roots, but turns out to stand also for any root thanks to Eq.~(\ref{neg shi coeff}).

\begin{prop}\label{descente indice k}
Let $w \in W$,  $s \in S_0$ and $t \in T_0$.  For all $\alpha \in \Phi_0$ one has 
\begin{itemize}
\item[(1)] $k(sw, \alpha) = k(w, s(\alpha)) + k(s, \alpha)$  \text{~~} {\cite[Proposition 4.2]{Shi87}}.
\item[(2)] $k(tw, \alpha) = k(w, t(\alpha)) + k(t, \alpha)$ \text{(reformulation of} \text{~~} {\cite[Proposition 3.2]{Ch20}}).
\end{itemize}
\end{prop}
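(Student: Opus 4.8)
The plan is to reduce both identities to their positive-root versions, which are exactly the cited results: Shi's \cite[Proposition 4.2]{Shi87} for part (1) and the reformulation of \cite[Proposition 3.2]{Ch20} for part (2). So I would first record that for $\alpha\in\Phi_0^+$ both identities hold by citation, and then do the only remaining work, namely deducing the case $\alpha\in\Phi_0^-$ from the formula $k(w,-\alpha)=-k(w,\alpha)$ of Eq.~(\ref{neg shi coeff}).

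Fix $\alpha\in\Phi_0^+$ and prove identity (1) for $-\alpha$; the argument for (2) is verbatim the same with $t$ in place of $s$. Applying Eq.~(\ref{neg shi coeff}) on the left-hand side gives $k(sw,-\alpha)=-k(sw,\alpha)$, and the positive-root identity then yields $k(sw,-\alpha)=-k(w,s(\alpha))-k(s,\alpha)$. For the right-hand side, note $s(-\alpha)=-s(\alpha)$; since Eq.~(\ref{neg shi coeff}) is valid for an \emph{arbitrary} root — in particular for the root $s(\alpha)$, whether it happens to be positive or negative — we get $k(w,s(-\alpha))=k(w,-s(\alpha))=-k(w,s(\alpha))$, and likewise $k(s,-\alpha)=-k(s,\alpha)$. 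Adding these two shows the right-hand side also equals $-k(w,s(\alpha))-k(s,\alpha)$, which matches the left-hand side and finishes the case.

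The only subtlety worth flagging is that $s(\alpha)$ (respectively $t(\alpha)$) need not be a positive root, so one must invoke Eq.~(\ref{neg shi coeff}) in the stated generality (for any $\alpha\in\Phi_0$) rather than merely as a rule for negating a positive root; beyond that, no genuine obstacle arises. Part (2) is then obtained by substituting $t$ for $s$ throughout and using \cite[Proposition 3.2]{Ch20} in place of \cite[Proposition 4.2]{Shi87} for the positive-root input.
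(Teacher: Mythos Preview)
Your proposal is correct and matches the paper's approach exactly: the paper does not give a proof beyond the sentence preceding the proposition, which asserts that the identities, ``in general only stated for positive roots,'' extend to all of $\Phi_0$ ``thanks to Eq.~(\ref{neg shi coeff}).'' You have simply spelled out that one-line reduction in detail, including the (correct) observation that Eq.~(\ref{neg shi coeff}) must be applied to the possibly non-positive root $s(\alpha)$.
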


\begin{lem}[{\cite[Proposition 4.3]{Shi87}}] \label{lemma k<0}
Let $w \in W$ and  $\beta \in \Delta_0$. If $s_{\beta} \in D_L(w)$ then $k(w,\beta) \leq -1$.
\end{lem}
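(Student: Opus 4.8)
The plan is to argue geometrically, using Shi's description of the coefficients $k(w,\beta)$ as locating the alcove $wA_\circ$ between consecutive hyperplanes of $\mathcal A(W,S)$. Recall from \cite[Proposition 5.1]{Shi87} that for $\beta\in\Phi_0^+$ the integer $k(w,\beta)$ is characterized by
\[
k(w,\beta)<\langle y,\beta\rangle<k(w,\beta)+1\qquad\text{for every }y\in wA_\circ ,
\]
so that $\langle y,\beta\rangle$ is non-integral on the open alcove $wA_\circ$, and $k(w,\beta)\leq -1$ precisely when $\langle y,\beta\rangle<0$ on $wA_\circ$ (equivalently, when $wA_\circ$ lies in the half-space $\{x\in V_0\mid\langle x,\beta\rangle<0\}$). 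One checks that this is consistent with Lemma~\ref{k groupe fini} when $w\in W_0$.

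First I would record the standard length criterion for the left action on alcoves: for $s\in S$ with associated wall $H_s$ (a wall of $A_\circ$) and any $w\in W$, one has $\ell(sw)<\ell(w)$ if and only if $H_s$ separates $A_\circ$ from $wA_\circ$ (see e.g.\ \cite[\S4.5--4.6]{Hu90}). Here $\beta\in\Delta_0$, so $s_\beta\in S_0\subseteq S$ and its wall is $H_{s_\beta}=H_{\beta,0}$. Since $A_\circ$ lies in $\{x\in V_0\mid\langle x,\beta\rangle>0\}$, the hypothesis $s_\beta\in D_L(w)$ forces $H_{\beta,0}$ to separate $A_\circ$ from $wA_\circ$, whence $wA_\circ\subseteq\{x\in V_0\mid\langle x,\beta\rangle<0\}$. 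By the characterization above, $k(w,\beta)\leq -1$, as claimed.

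Alternatively, one may stay inside the combinatorial formalism: Proposition~\ref{descente indice k}(1) applied to $s=s_\beta$, combined with $s_\beta(\beta)=-\beta$, Eq.~\eqref{neg shi coeff}, and Lemma~\ref{k groupe fini} (note $N(s_\beta)=\{\beta\}$, so $k(s_\beta,\beta)=-1$), yields $k(s_\beta w,\beta)=-k(w,\beta)-1$. Thus $k(w,\beta)\leq -1$ is equivalent to $k(s_\beta w,\beta)\geq 0$, and since $\ell(s_\beta w)=\ell(w)-1$ we have $s_\beta\notin D_L(s_\beta w)$; so this amounts to the reverse (``ascent'') half of the equivalence $s_\beta\in D_L(v)\Leftrightarrow k(v,\beta)\leq -1$, which one establishes by the same alcove-position argument applied to $v=s_\beta w$. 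I do not expect a genuine obstacle here: the only point requiring care is to transport Shi's conventions (right action on roots, root system built on $\Phi_0^\vee$) into the present setting so that the geometric meaning of $k(w,\beta)$ is the one displayed above; once this translation is in place, the lemma is immediate.
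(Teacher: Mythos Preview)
Your argument is correct. Note, however, that the paper does not give its own proof of this lemma: it is simply cited as \cite[Proposition~4.3]{Shi87}, and Remark~\ref{rem:use}(1) points out that Shi himself does not prove it in \cite{Shi87} either, referring instead to \cite[Chapter~7]{Shi86}. The remark then sketches two possible direct approaches: one via an analog of \cite[Proposition~2.16]{DyHo16} for ``$p$-inversion sets $N^p(w)$'', and one (in Remark~\ref{rem:use}(2)) via the geometric interpretation of $|k(w,\alpha)|$ as the number of hyperplanes parallel to $H_\alpha$ separating $w\cdot A_\circ$ from $A_\circ$, with negative sign precisely when $H_\alpha$ itself is among them.

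Your main argument is exactly this second, geometric route, written out in full: the alcove-position characterization of $k(w,\beta)$ from \cite{Shi87} together with the standard wall-separation criterion for left descents yields the result immediately. This is more elementary and self-contained than the $p$-inversion-set approach the paper alludes to, and it has the virtue of making the equivalence $s_\beta\in D_L(w)\Leftrightarrow k(w,\beta)\leq -1$ transparent (you only state and use the forward direction, as required). Your alternative combinatorial rewriting via Proposition~\ref{descente indice k}(1) is also fine, and you correctly observe that it ultimately rests on the same geometric fact applied to $s_\beta w$. The caveat about transporting Shi's conventions (right action, coroot side) is apt and already addressed in \S\ref{para coef k} of the paper.
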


\begin{remark}\label{rem:use}  \begin{enumerate}
\item Shi does not give a proof of Lemma~\ref{lemma k<0} in \cite{Shi87}; instead he refers the reader to \cite[Chapter 7]{Shi86}.  We point out that a direct proof of this result can be obtain using an analog of \cite[Proposition 2.16]{DyHo16} for ``$p$-inversions sets $N^p(w)$''.
\item It is very useful to notice that, for an arbitrary $w\in W$ and $\alpha\in \Phi^+_0$, $|k(w,\alpha)|$ is given by the number of hyperplanes parallel to $H_{\alpha}$  that separates $w\cdot A_\circ$ from $A_\circ$; the sign is then negative if $H_{\alpha}$ is one of these hyperplanes.
\end{enumerate}
\end{remark}

\begin{ex}[Type $\tilde A_2$]\label{ex:A2} Consider $(W,S)$ of type $\tilde A_2$. Alcoves and their labelling are shown in Figure~\ref{fig:A2}. Write $S_0=\{s_1,s_2\}$, so $\Delta_0=\{\alpha_1,\alpha_2\}$ and $\Phi_0^+=\{\alpha_1,\alpha_2,\alpha_1+\alpha_2\}$ (with $s_i=s_{\alpha_i}$).

\begin{figure}[h!]
\includegraphics[scale=0.45]{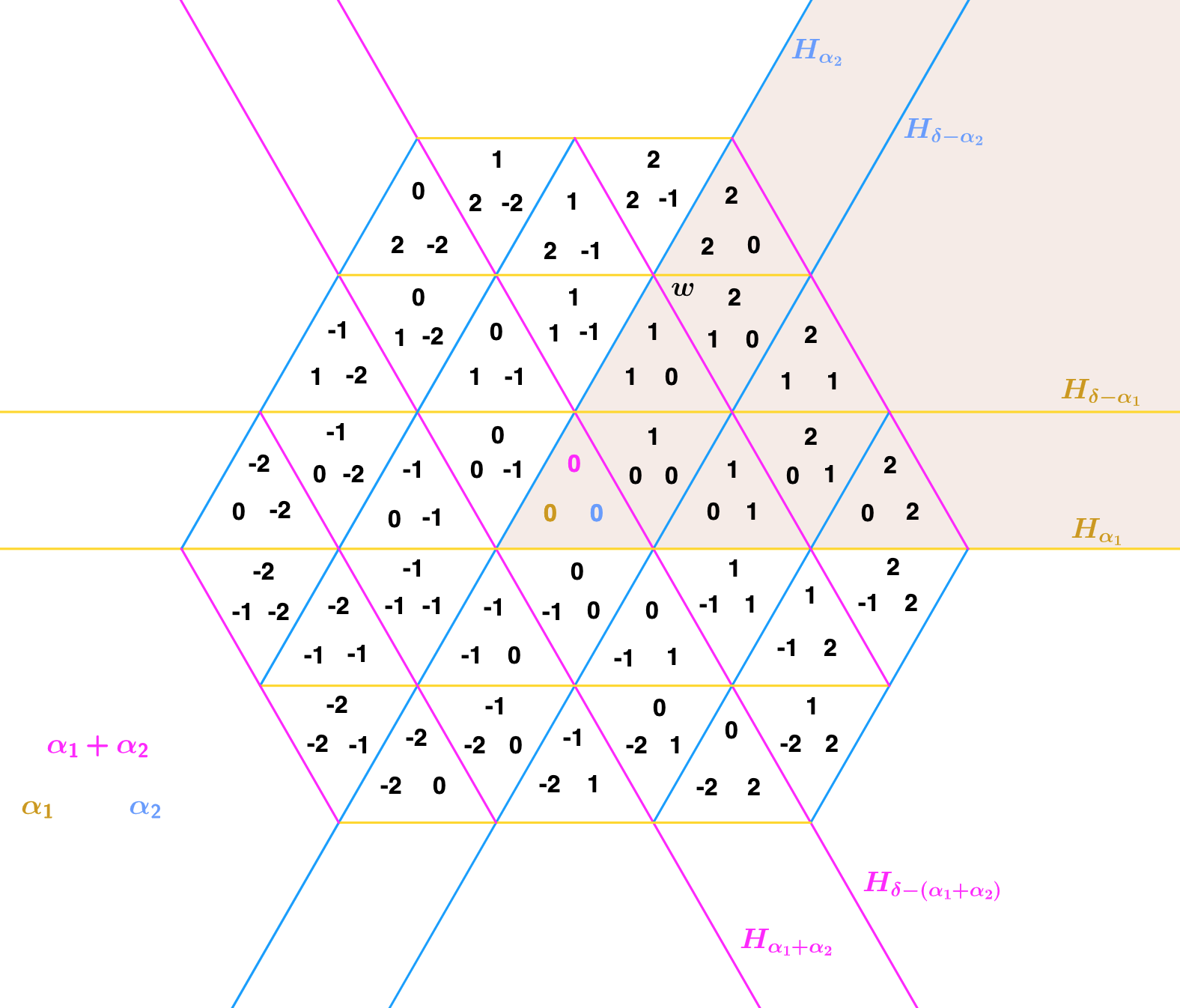}
\caption{The Coxeter arrangement of type $\tilde A_2$. Each alcove is labelled with its Shi parameterization. 
The labels $k(w,\alpha)$ for $\alpha\in \Phi_0^+$ are indicated in each alcove with the parameterization given at the left-hand side of the figure. The shaded region is the dominant region of $\mathcal A(W,S)$, which is also the fundamental chamber for the finite Weyl group $W_0$. The fundamental alcove $A_\circ$, which corresponds to $e$, is parameterized by $0$'s.}
\label{fig:A2}
\end{figure}
\end{ex}

\begin{ex}[Type $\tilde B_2$]\label{ex:B2} Consider $(W,S)$ of type $\tilde B_2$, with underlying finite Weyl group of type $B_2$.  We consider here $V=\mathbb R_2$ with orthonormal basis $\{e_1,e_2\}$. Set $\alpha_1=e_1$ and $\alpha_2=e_2-e_1$. Write $S_0=\{s_1,s_2\}$, so $\Delta_0=\{\alpha_1,\alpha_2\}$ and $\Phi_0^+=\{\alpha_1,\alpha_2,\alpha_1+\alpha_2,2\alpha_1+\alpha_2\}$ (with $s_i=s_{\alpha_i}$). The long roots are $\alpha_2$ and $2\alpha_1+\alpha_2$. Alcoves and their labelling are shown in Figure~\ref{fig:B2}.

\begin{figure}[h!]
\includegraphics[scale=2.8]{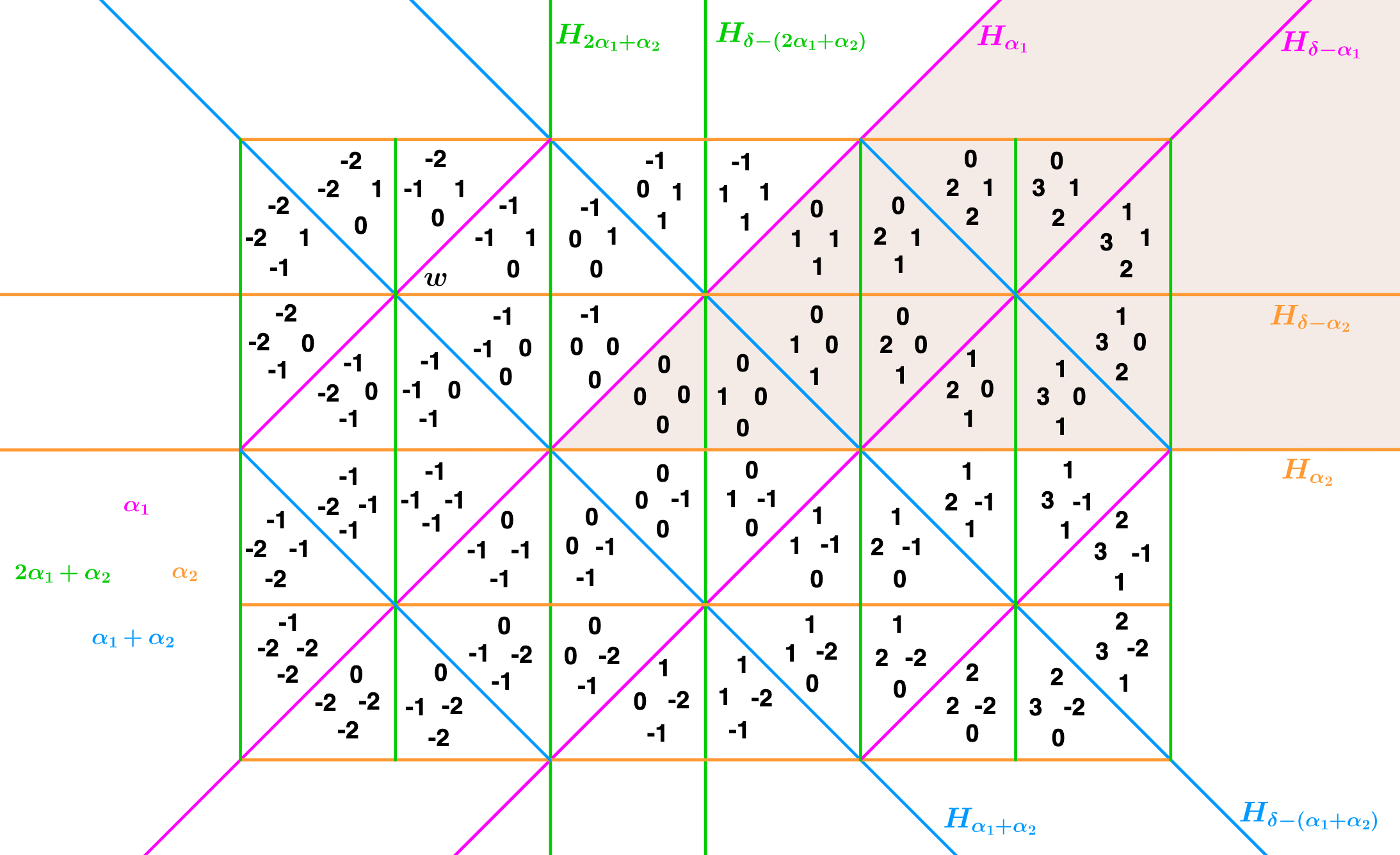}
\caption{The Coxeter arrangement of type $\tilde B_2$. Each alcove is labelled with its Shi parameterization. 
The labels $k(w,\alpha)$ for $\alpha\in \Phi_0^+$ are indicated in each alcove with the parameterization given at the left-hand side of the figure. The shaded region is the dominant region of $\mathcal A(W,S)$, which is also the fundamental chamber for the finite Weyl group $W_0$.  The fundamental alcove $A_\circ$, which corresponds to $e$, is parameterized by $0$'s.}
\label{fig:B2}
\end{figure}
\end{ex}

%%%%
\subsection{Affine crystallographic root systems}\label{ss:Aff1}

Set $V=V_0\oplus \mathbb R \delta$ to be a real vector space with basis $\Delta_0\sqcup \{\delta\}$, where $\delta$ is an indeterminate.  We define a symmetric bilinear form $B:V\times V\to \mathbb R$ by extending $\langle \cdot,\cdot\rangle$ to $V$ as follows: for $\alpha,\beta \in \Delta_0$ we set
$
B(\alpha,\beta) = \langle \alpha,\beta\rangle,\  B(\alpha,\delta)=0 \textrm{ and } B(\delta,\delta)=0 .
$
The pair $(V,B)$ is a quadratic space for which  the isotropic cone  is  $\mathbb R\delta= \{x\in V\mid B(x,x)=0\}$. 

\smallskip
We  describe now a {\em geometric representation of $(W,S)$ on $(V,B)$}.  A simple system in $(V,B)$ for $(W,S)$ is
$
\Delta = \Delta_0\sqcup \{\delta - \alpha_0\}.
$
The  root system $\Phi$ and the positive root system $\Phi^+$ for $(W,S)$ in $(V,B)$ are defined by $\Phi=\Phi^+\sqcup \Phi^-$, where $\Phi^-=-\Phi^+$ and
$\Phi^+ =( \Phi_0^+ +  \mathbb N\delta ) \sqcup ( \Phi_0^-+ \mathbb N^*\delta)$. The pair $(\Phi,\Delta)$ is called a {\em (crystallographic) based root system of $(W,S)$ in~$(V,B)$}. 
For $\alpha+k\delta\in\Phi^+$, the {\em reflection} $s_{\alpha+k\delta}:V\to V$ is defined as follows. Let $x=x_0+a\delta\in V$, then: 
\begin{align}\label{eq:Ref2}
s_{\alpha+k\delta} (x):= x - 2\frac{B(\alpha+k\delta,x)}{B(\alpha+k\delta,\alpha+k\delta)}(\alpha+k\delta) =  s_\alpha(x_0) + (a - k\langle \alpha^\vee,x_0 \rangle)\delta .
\end{align}

Observe that $s_{\alpha+0\delta} = s_\alpha$.  It follows that
$
S=S_0\sqcup \{s_{\delta-\alpha_0}\}.
$
The correspondences  between the positive root system $\Phi^+$,  the reflections on $V$ and those on $V_0$ defined in \S \ref{affine cox arrangement}, are as follows:
$$
\begin{array}{ccccc}
\Phi^+ & \longrightarrow & \{ \text{reflections on} ~V \} & \longrightarrow & \{ \text{reflections on} ~V_0 \} \\
             \alpha+k\delta & \longmapsto & s_{\alpha +k\delta}  & \longmapsto & s_{-\alpha,k} .
\end{array}
$$
Therefore,  the hyperplane associated to $\alpha+k\delta \in \Phi^+$ is
$
 H_{\alpha+k\delta}:= H_{-\alpha,k}.
$

%%%%%%
\section{Small roots and low elements  in affine Weyl groups}\label{se:Low}  Let $(W,S)$ be an affine Coxeter system with underlying Weyl group $W_0$. We now survey the notions of small roots and low elements, see \cite{DyHo16} for more details.  We gave general definitions of small roots and low elements in the introduction. In the case of affine Weyl groups these notions have easier interpretations, which we discuss below.

\subsection{Small roots and low elements} \label{small root} A positive root $\alpha+k\delta\in \Phi^+$ (resp. a hyperplane $H_{\alpha+k\delta}\in \mathcal A(W,S)$) is {\em small} if there is no hyperplane parallel to $H_{\alpha+k\delta}$ that separates $H_{\alpha+k\delta}$ from $A_\circ$. Denote by $\Sigma:=\Sigma(W,S)$ the set of small roots.  It turns out that  
$$
\Sigma =\Phi_0^+\sqcup (\delta-\Phi_0^+) =  \{\alpha,\delta-\alpha\mid \alpha\in\Phi_0^+\}, 
$$
which is of cardinality $|\Phi_0|$.  It is well-known, see for instance \cite[Proposition 1.4]{Hu90}, that $s\in S_0$ is a bijection on $\Phi_0^+\setminus\{\alpha_s\}$, where $\alpha_s$ denotes the root in $\Delta_0$ such that $s= s_{\alpha_s}$. The next proposition is an analog of this result for $\Sigma$. 

\begin{prop}\label{prop:S0} Let $s\in S_0$ then $s(\Sigma\setminus\{\alpha_s,\delta-\alpha_s\})=\Sigma\setminus\{\alpha_s,\delta-\alpha_s\}$.
\end{prop}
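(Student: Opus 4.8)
The plan is to prove that reflection by $s\in S_0$ preserves the set $\Sigma\setminus\{\alpha_s,\delta-\alpha_s\}$ by checking what $s$ does to each of the two pieces $\Phi_0^+\setminus\{\alpha_s\}$ and $(\delta-\Phi_0^+)\setminus\{\delta-\alpha_s\}$, using the explicit description $\Sigma=\Phi_0^+\sqcup(\delta-\Phi_0^+)$ and the action formula~\eqref{eq:Ref2}. Since $s=s_{\alpha_s}$ acts on $V=V_0\oplus\mathbb R\delta$ by $s(\beta+k\delta)=s(\beta)+k\delta$ (the $\delta$-coefficient is unchanged because $s$ fixes $\delta$), the subspace decomposition is respected: $s$ maps $\Phi_0^+$-level roots to $\Phi_0$-level roots and $(\delta-\Phi_0^+)$-type roots to $(\delta-\Phi_0)$-type roots.

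First I would recall the classical fact, quoted in the excerpt just before the proposition (``$s\in S_0$ is a bijection on $\Phi_0^+\setminus\{\alpha_s\}$''), that $s$ permutes $\Phi_0^+\setminus\{\alpha_s\}$ and sends $\alpha_s$ to $-\alpha_s\in\Phi_0^-$. So for the first piece: if $\beta\in\Phi_0^+\setminus\{\alpha_s\}$ then $s(\beta)\in\Phi_0^+\setminus\{\alpha_s\}\subseteq\Sigma\setminus\{\alpha_s,\delta-\alpha_s\}$, and this is a bijection of that subset onto itself. For the second piece: take $\delta-\beta$ with $\beta\in\Phi_0^+\setminus\{\alpha_s\}$. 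Then $s(\delta-\beta)=\delta-s(\beta)$, and since $s(\beta)\in\Phi_0^+\setminus\{\alpha_s\}$, we get $s(\delta-\beta)=\delta-s(\beta)\in(\delta-\Phi_0^+)\setminus\{\delta-\alpha_s\}\subseteq\Sigma\setminus\{\alpha_s,\delta-\alpha_s\}$, again bijectively. Putting the two bijections together, $s$ restricts to a bijection of $\Sigma\setminus\{\alpha_s,\delta-\alpha_s\}$ onto itself, which is the claim.

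The only thing to be slightly careful about is the interaction of the two excluded elements across the two pieces: one must check that $s$ does not push something from $\Phi_0^+\setminus\{\alpha_s\}$ onto $\delta-\alpha_s$ or vice versa. This cannot happen precisely because $s$ preserves the $\delta$-grading: a $\Phi_0$-level root stays $\Phi_0$-level, so it can never equal $\delta-\alpha_s$, which has $\delta$-coefficient $1$. Hence the two excluded roots $\alpha_s$ (the image of $\alpha_s$ under $s$ being $-\alpha_s$, which is not in $\Sigma$) and $\delta-\alpha_s$ (whose image $\delta-s(\alpha_s)=\delta+\alpha_s\notin\Sigma$) are handled independently in their respective pieces. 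I do not anticipate any real obstacle here; the statement is essentially a bookkeeping consequence of~\eqref{eq:Ref2} together with the standard permutation property of simple reflections on positive roots. If one wanted to avoid invoking the classical result one could instead argue directly: $s(\beta)$ is a root of the same length-grade, $s$ is an involution, and $s(\beta)\in\Phi_0^-$ iff $\beta=\alpha_s$; but citing \cite[Proposition 1.4]{Hu90} is cleaner.
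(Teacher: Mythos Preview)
Your proposal is correct and follows essentially the same approach as the paper: split $\Sigma\setminus\{\alpha_s,\delta-\alpha_s\}$ into $\Phi_0^+\setminus\{\alpha_s\}$ and $\delta-(\Phi_0^+\setminus\{\alpha_s\})$, use the classical fact that $s$ permutes $\Phi_0^+\setminus\{\alpha_s\}$, and apply Eq.~\eqref{eq:Ref2} to get $s(\delta-\alpha)=\delta-s(\alpha)$. The paper concludes equality from injectivity of $s$ on a finite set rather than explicitly invoking bijectivity on each piece, and omits your paragraph on the ``interaction'' (which is handled automatically by the $\delta$-grading), but the argument is the same.
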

\begin{proof} Let $\alpha\in \Phi^+_0\setminus \{\alpha_s\}$, then $s(\alpha)\in \Phi^+_0\setminus\{\alpha_s\} \subseteq \Sigma\setminus\{\alpha_s,\delta-\alpha_s\}$. We also have by Eq.~(\ref{eq:Ref2}):
$$
s(\delta-\alpha)=\delta-s(\alpha) \in \delta - (\Phi^+_0\setminus\{\alpha_s\})\subseteq \Sigma\setminus\{\alpha_s,\delta-\alpha_s\}.
$$
Therefore $s(\Sigma\setminus\{\alpha_s,\delta-\alpha_s\})\subseteq\Sigma\setminus\{\alpha_s,\delta-\alpha_s\}$; the equality follows since $s$ is injective.
\end{proof}

An element  $w\in W$ is {\em a low element of $(W,S)$} if there is $X\subseteq \Sigma$ such that 
$$
N(w)=\cone_\Phi(X):=\cone(X)\cap\Phi.
$$ 
We denote by $L(W,S)$ the set of low elements of $(W,S)$. If there is no possible confusion, we write $L:=L(W,S)$. Low element were introduced in order to prove that there is a finite Garside shadow, i.e., a finite subset of $W$ containing $S$, closed by taking suffixes and closed under the join operator of the right weak order, see It is proven in \cite[Theorem 1.1]{DyHo16}.

\begin{ex}
We continue Example \ref{ex:A2}. The small roots are:
$$
\Sigma=\{\alpha_1,\alpha_2,\alpha_1+\alpha_2, \delta-\alpha_1,\delta-\alpha_2, \delta-(\alpha_1+\alpha_2)\}.
$$
Obviously $k(e,\alpha_1)=k(e,\alpha_2)=k(e,\alpha_1+\alpha_2)=0$. Now, for an arbitrary $w\in W$, $|k(w,\alpha_1)|$ is given by the number of hyperplanes parallel to $H_{\alpha_1}$  that separates $w\cdot A_\circ$ from $A_\circ$; the sign is then negative if $H_{\alpha_1}$ is one of these hyperplanes (see Remark~\ref{rem:use}). For instance, the $w$ indicated in Figure~\ref{fig:A2} has $k(w,\alpha_1)=1$ since only $H_{\delta-\alpha_1}$ separates the alcove $w\cdot A_\circ$ from $A_\circ$. We proceed similarly for $\alpha_2$ and the highest root $\alpha_1+\alpha_2$. So for the particular $w$ indicated in Figure~\ref{fig:A2} we have:  $k(w,\alpha_2)=0$, since no parallel to $H_{\alpha_2}$ separates $w\cdot A_\circ$ from $A_\circ$;  $k(w,\alpha_1+\alpha_2)=2$, since $H_{\delta-(\alpha_1+\alpha_2)}$ and $H_{2\delta-(\alpha_1+\alpha_2)}$ separates $w\cdot A_\circ$ from $A_\circ$.
\end{ex}

\begin{ex}
We continue Example \ref{ex:B2}. The small roots are:
$$
\Sigma=\{\alpha_1,\alpha_2,\alpha_1+\alpha_2, 2\alpha_1+\alpha_2,\delta-\alpha_1,\delta-\alpha_2, \delta-(\alpha_1+\alpha_2), \delta-(2\alpha_1+\alpha_2)\}.
$$
 Now, for an arbitrary $w\in W$, $|k(w,\beta)|$ is, as in the preceding example, given by the number of hyperplanes parallel to $H_{\beta}$  that separates $w\cdot A_\circ$ from $A_\circ$; the sign is then negative if $H_{\beta}$ is one of these hyperplanes. For instance, the $w$ indicated in Figure~\ref{fig:B2} has $k(w,\alpha_1)=1$ since only $H_{\delta-\alpha_1}$ separates $w\cdot A_\circ$ from $A_\circ$. We proceed similarly for $\alpha_2$, $\alpha_1+\alpha_2$ and the highest root $2\alpha_1+\alpha_2$. So we obtain  $k(w,\alpha_2)=1$, since only $H_{\delta-\alpha_2}$ separates $w\cdot A_\circ$ from $A_\circ$, $k(w,\alpha_1+\alpha_2)=0$, since no parallel to $H_{\alpha_1+\alpha_2}$ separates the alcove $w\cdot A_\circ$ from $A_\circ$, and  finally $k(w,2\alpha_1+\alpha_2)=-1$, since only  $H_{2\alpha_1+\alpha_2}$  separates $w\cdot A_\circ$ from $A_\circ$.
\end{ex}

\subsection{Small inversion sets} In order to check if an element $w\in W$ is a low element, it is often convenient to consider all small roots in the inversion set $N(w)$. 

\smallskip
The {\em small inversion set of $w\in W$} is $\Sigma(w):=N(w)\cap \Sigma$. Therefore, an element $w\in W$ is a low element if and only if $N(w)=\cone_\Phi(\Sigma(w))$.
The set of all small inversion sets is
$
\Lambda(W,S)=\{\Sigma(w)\mid w\in W\} \subseteq 2^\Sigma.
$
We write $\Lambda:=\Lambda(W,S)$ if there is no possible confusion. Notice that, since $\Sigma$ is finite, the set $2^\Sigma$ of subsets of $\Sigma$ is finite, hence $\Lambda$ is finite.

The following result, which defines the transitions in the so-called Brink-Howlett automaton,  is \cite[Lemma 3.21 (1)]{DyHo16};  we refer the reader to~\cite{HoNaWi16,PaYa19,PaYa21} for more information on the Brink-Howlett automaton.

\begin{lem}\label{lem:DH} Let $w\in W$ and $s\in S$ such that $\ell(w)>\ell(sw)$. Then
$
\Sigma(w)=\{\alpha_s\}\sqcup s(\Sigma(sw))\cap \Sigma.
$
\end{lem}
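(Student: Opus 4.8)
The plan is to combine the classical decomposition of an inversion set under a left descent with the explicit shape $\Sigma=\Phi_0^+\sqcup(\delta-\Phi_0^+)$ and with Proposition~\ref{prop:S0}. Since $\ell(sw)<\ell(w)$ we have $\alpha_s\in N(w)$ together with the standard left-descent decomposition $N(w)=\{\alpha_s\}\sqcup s\big(N(sw)\big)$, where $\alpha_s\notin s(N(sw))$ because $\alpha_s\notin N(sw)$. Intersecting with $\Sigma$, and using that the simple root $\alpha_s$ is small, gives $\Sigma(w)=\{\alpha_s\}\sqcup\big(s(N(sw))\cap\Sigma\big)$. Since trivially $s(\Sigma(sw))\cap\Sigma\subseteq s(N(sw))\cap\Sigma$, the lemma reduces to proving the reverse inclusion, namely: \emph{every $\beta\in\Sigma$ with $s\beta\in N(sw)$ satisfies $s\beta\in\Sigma$}. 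Indeed, once this is known, $s\beta\in N(sw)\cap\Sigma=\Sigma(sw)$, hence $\beta=s(s\beta)\in s(\Sigma(sw))\cap\Sigma$, and disjointness is inherited from $\alpha_s\notin s(N(sw))$.

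So let $\beta\in\Sigma$ with $s\beta\in N(sw)\subseteq\Phi^+$. A simple reflection sends only $\alpha_s$ to a negative root, so $s\beta\in\Phi^+$ forces $\beta\neq\alpha_s$. By Proposition~\ref{prop:S0} when $s\in S_0$ — and by the entirely analogous statement for $s=s_{\delta-\alpha_0}$, which is a one-line computation from Eq.~(\ref{eq:Ref2}); in both cases $\delta-\alpha_s\in\Sigma$ and $s$ restricts to a permutation of $\Sigma\setminus\{\alpha_s,\delta-\alpha_s\}$ — we get $s\beta\in\Sigma$ \emph{unless} $\beta=\delta-\alpha_s$. It therefore remains to exclude the case $\beta=\delta-\alpha_s$. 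If it occurred, then applying $s$ to $s\beta\in N(sw)$ and using $N(w)=\{\alpha_s\}\sqcup s(N(sw))$ would give $\delta-\alpha_s=s(s\beta)\in N(w)$, while also $\alpha_s\in N(w)$.

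Thus the whole proof comes down to the auxiliary fact: \emph{for any $w\in W$ and any $\alpha\in\Phi_0^+$, the roots $\alpha$ and $\delta-\alpha$ are never both in $N(w)$} — to be applied with $\alpha=\alpha_s$ when $s\in S_0$, and with $\alpha=\alpha_0$ (using $\delta-\alpha_s=\alpha_0$) when $s=s_{\delta-\alpha_0}$. This is immediate from the alcove dictionary of Remark~\ref{rem:use}(2): $\alpha\in N(w)$ forces the wall $H_{\alpha,0}=\{\langle x,\alpha\rangle=0\}$ to separate $wA_\circ$ from $A_\circ$, hence $\langle\,\cdot\,,\alpha\rangle<0$ on $wA_\circ$; while $\delta-\alpha\in N(w)$ forces the parallel wall $H_{\delta-\alpha}=H_{\alpha,1}=\{\langle x,\alpha\rangle=1\}$ to separate them, hence $\langle\,\cdot\,,\alpha\rangle>1$ on $wA_\circ$; but $0<\langle x,\alpha\rangle\le\langle x,\alpha_0\rangle<1$ for every $x\in A_\circ$, so $A_\circ$ lies strictly between these two parallel walls and $wA_\circ$ cannot be beyond both — a contradiction. (Equivalently, putting $\gamma=w^{-1}\alpha$ one has $\gamma\in\Phi^-$ and $\delta-\gamma=w^{-1}(\delta-\alpha)\in\Phi^-$, which is impossible by inspection of $\Phi^+=(\Phi_0^++\mathbb N\delta)\sqcup(\Phi_0^-+\mathbb N^*\delta)$.) This contradiction disposes of the case $\beta=\delta-\alpha_s$ and completes the argument. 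The only genuine content is this last step — identifying $\delta-\alpha_s$ as the single small root on which $s$ may fail to act by a small root, and seeing that the descent hypothesis excludes it via the incompatibility of $\alpha$ and $\delta-\alpha$ as inversions; the rest is routine bookkeeping with inversion sets, so I expect no serious obstacle.
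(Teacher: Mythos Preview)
Your argument is correct. The paper does not give its own proof of this lemma: it simply cites \cite[Lemma~3.21(1)]{DyHo16}, where the result is established for arbitrary Coxeter systems. The general proof there rests on the dominance--order dichotomy for small roots: for $s\in S$ and $\beta\in\Sigma$ with $s\beta\in\Phi^+$, either $s\beta\in\Sigma$ or $s\beta$ dominates $\alpha_s$; in the latter case $s\beta\in N(sw)$ would force $\alpha_s\in N(sw)$, contradicting $\ell(sw)<\ell(w)$. Your route is different and genuinely affine-specific: you exploit the explicit description $\Sigma=\Phi_0^+\sqcup(\delta-\Phi_0^+)$, the fact that each simple reflection permutes $\Sigma\setminus\{\alpha_s,\delta-\alpha_s\}$, and the geometric impossibility of having both $\alpha$ and $\delta-\alpha$ in $N(w)$. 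What the general argument buys is uniformity across all Coxeter types; what yours buys is a self-contained proof within the paper that avoids invoking dominance theory. One small caveat: the ``one-line computation'' extending Proposition~\ref{prop:S0} to $s=s_{\delta-\alpha_0}$ is indeed routine, but it does use that $\langle\alpha_0^\vee,\alpha\rangle\in\{0,1\}$ for every $\alpha\in\Phi_0^+\setminus\{\alpha_0\}$ (a consequence of $\alpha_0$ being the highest long root, via Cauchy--Schwarz), so it is worth stating that explicitly rather than leaving it implicit.
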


The set of low elements is finite since the map $\sigma:L\to \Lambda$, defined by $w\mapsto \Sigma(w)$, is injective (\cite[Proposition 3.26 (ii)]{DyHo16}). It is conjectured by Dyer and the second author that this map is surjective, which turns out to to be equivalent to Conjecture~\ref{conj:Main} (see \S\ref{ss:ShiSmall} for more details).
 
\begin{conj}[{\cite[Conjecture 2]{DyHo16}}]\label{conj:DH} The map $\sigma:L\to \Lambda$  is a bijection.
\end{conj}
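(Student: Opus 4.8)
The reduction to an irreducible affine $(W,S)$ is standard (see \cite{DyLe11}), so assume $(W,S)$ irreducible. The map $\sigma\colon L\to\Lambda$ is injective by \cite[Proposition 3.26 (ii)]{DyHo16}, so the content is surjectivity. Whether a small hyperplane $H_\alpha$ with $\alpha\in\Sigma$ separates $wC$ from $C$ depends only on the Shi region containing $wC$, so $w\mapsto\Sigma(w)$ is constant on Shi regions and induces a bijection between the set of Shi regions and $\Lambda$; moreover, by Shi~\cite[Proposition 7.2]{Shi88} each Shi region of an irreducible affine Coxeter system contains a unique element of minimal length. Hence $L_\shi\to\Lambda$, $w\mapsto\Sigma(w)$, is a bijection, and $\sigma$ is surjective as soon as $L_\shi\subseteq L$. (One also has $L\subseteq L_\shi$ unconditionally: inversion sets are convex, so if $u$ is low and $u'$ lies in the same Shi region then $N(u)=\cone_\Phi(\Sigma(u))=\cone_\Phi(\Sigma(u'))\subseteq\cone_\Phi(N(u'))=N(u')$, whence $\ell(u)=|N(u)|\le|N(u')|=\ell(u')$.) So the whole statement reduces to proving $L_\shi\subseteq L$.

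The plan is to prove $L_\shi\subseteq L$ by induction on $\ell(w)$. Two preliminary facts are needed: first, $L_\shi$ is closed under taking suffixes (Proposition~\ref{prop:ShiS}), which I would get by reading a left multiplication $w\mapsto sw$ with $s\in D_L(w)$ as a move of alcoves toward $A_\circ$; second, since $\Sigma(w)\subseteq N(w)$ and $N(w)$ is convex ($N(w)=\cone_\Phi(N(w))$), an element $w$ is low if and only if $N(w)\subseteq\cone_\Phi(\Sigma(w))$. For the induction, if the alcove $wA_\circ$ lies in the closed dominant chamber $\overline{C_\circ}$ --- equivalently $N(w)\cap\Phi_0^+=\varnothing$ --- we are in the base case, namely that such $w$ are low; I would obtain this by identifying $L_\shi\cap\overline{C_\circ}$ with the low elements of the dominant region, and with Cellini and Papi's set of minimal elements for $ad$-nilpotent ideals (Corollary~\ref{cor:LLshiDom}). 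Otherwise $N(w)\cap\Phi_0^+\neq\varnothing$, and since $\Phi^+=\cone(\Delta)\cap\Phi$ this forces some simple root $\alpha_s\in\Delta_0$ into $N(w)$, i.e. $s\in S_0\cap D_L(w)$. By suffix-closure $sw\in L_\shi$, so $sw\in L$ by induction, and the aim is to deduce $w\in L$ using $N(w)=\{\alpha_s\}\sqcup s\bigl(N(sw)\bigr)$ together with $\Sigma(w)=\{\alpha_s\}\sqcup\bigl(s(\Sigma(sw))\cap\Sigma\bigr)$ (Lemma~\ref{lem:DH}).

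Since $N(sw)=\cone_\Phi(\Sigma(sw))$, we have $N(w)=\{\alpha_s\}\sqcup\cone_\Phi\bigl(s(\Sigma(sw))\bigr)$, so by the criterion above it is enough to check $s(\Sigma(sw))\subseteq\cone(\Sigma(w))$. As $s\in D_L(w)$ one has $\alpha_s\notin\Sigma(sw)$, so Proposition~\ref{prop:S0} sends $\Sigma(sw)\setminus\{\delta-\alpha_s\}$ into $\Sigma$, and even into $\Sigma\cap N(w)=\Sigma(w)$. The sole obstruction is $\delta-\alpha_s$: if $\delta-\alpha_s\in\Sigma(sw)$, then $s(\delta-\alpha_s)=\delta+\alpha_s$ is a \emph{non-small} root lying in $N(w)$ that must nevertheless be expressed inside $\cone_\Phi(\Sigma(w))$. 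By Proposition~\ref{descente indice k}(1), Eq.~(\ref{neg shi coeff}) and Lemma~\ref{k groupe fini} one computes $k(sw,\alpha_s)=-k(w,\alpha_s)-1$, so this case is exactly $k(w,\alpha_s)\le-2$, and then $\delta-\alpha_s\notin\Sigma(w)$; the naive identity $\delta+\alpha_s=(\delta-\alpha_s)+2\alpha_s$ is therefore useless, and the step does not close on its own. This is the heart of the argument. The proposed remedy is the combinatorics of descent-walls: characterise the descent-walls of a Shi region $\mathcal R$ (the walls separating $\mathcal R$ from the fundamental alcove) via Shi's admissible sign types and Shi coefficients (Proposition~\ref{cor:Key1}); extract a concrete family of descent-walls of $\mathcal R$ (Lemma~\ref{lem:Key1}); and prove the descent-wall theorem --- for $w\in L_\shi$, the descent-walls of the alcove $wA_\circ$ coincide with the descent-walls of its Shi region $\mathcal R_w$ (Theorem~\ref{thm:ShiLDes}). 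The descent-wall theorem drives the induction through the bad case: the walls of $\mathcal R_w$ facing $A_\circ$ constrain the sign type of $\mathcal R_w$ tightly enough that, whenever $k(w,\alpha_s)\le-2$, the set $\Sigma(w)$ already contains small roots expressing $\delta+\alpha_s$ --- hence all of $N(w)$ --- as a nonnegative combination over $\Sigma(w)$.

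Granting this, the induction gives $w\in L$, so $L_\shi\subseteq L$; with $L\subseteq L_\shi$ we conclude $L=L_\shi$, and therefore $\sigma\colon L\to\Lambda$ is the bijection produced in the first paragraph. The step I expect to be the main obstacle is the descent-wall theorem, i.e. transferring the alcove-level data (the facets of $wA_\circ$ pointing at $A_\circ$) to the region-level data (the walls of $\mathcal R_w$): this rigidity is precisely what is missing for general Coxeter systems, and proving it in the affine case should rest on the structure of Shi's admissible sign types and the link with Cellini and Papi's work.
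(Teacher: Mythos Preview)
Your proposal is correct and follows essentially the paper's route: reduce to $L_\shi\subseteq L$, induct along the $W_0$-part of the parabolic decomposition $w=uv$, use Cellini--Papi for the dominant base case, and invoke the descent-wall theorem (together with Lemma~\ref{lem:Key1}) to close the inductive step. The only difference is cosmetic: where you aim to place $\delta+\alpha_s$ inside $\cone(\Sigma(w))$, the paper instead shows $\delta-\alpha_s\notin N^1(sw)$ by contradiction (via Corollary~\ref{lem:1} and Proposition~\ref{prop:LLLshi}, which applies the descent-wall theorem to $sw$ and to $w$ respectively) and then concludes $N^1(w)\subseteq\{\alpha_s\}\cup s(N^1(sw))\subseteq\Sigma$ using Proposition~\ref{prop:S0} and \cite[Theorem~4.10]{DyHo16}.
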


\begin{remark}  The map $\sigma$ is denoted by $\Sigma$ in \cite{DyHo16}, we change the notation to avoid confusion with the set of small roots $\Sigma$.
\end{remark}

%%%%%
\subsection{Basis of inversion sets}  The most efficient way to check if an element $w\in W$ is low is to consider a basis for the inversion set $N(w)$; see \cite[\S4.2]{DyHo16} for details.

Let $w\in W$, consider the set:
\begin{eqnarray*}
N^1(w)&:=&\{\alpha+k\delta\in \Phi^+\mid \mathbb R(\alpha+k\delta) \textrm{ is an extreme ray of }\cone(N(w))\}\\
&=&\{\alpha+k\delta\in \Phi^+\mid \ell(s_{\alpha+k\delta}w)=\ell(w)-1\}.
\end{eqnarray*}
The set $N^1(w)$ is called the {\em basis of the inversion set $N(w)$} and is the inclusion-minimal subset of $N(w)$ such that
$N(w)=\cone_\Phi(N^1(w)).$  

\begin{prop}\label{prop:Basis} Let $w\in W$, then $w\in L$ if and only if $N^1(w)\subseteq \Sigma$. 
\end{prop}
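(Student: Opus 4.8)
The plan is to prove the two implications separately, using the two descriptions of $N(w)=\cone_\Phi(N^1(w))$ and $N(w)=\cone_\Phi(\Sigma(w))$ (when $w$ is low) as the bridge between the basis $N^1(w)$ and the small inversion set $\Sigma(w)$.

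First I would prove the easy direction: if $N^1(w)\subseteq \Sigma$, then since $N^1(w)\subseteq N(w)$ we get $N^1(w)\subseteq N(w)\cap\Sigma=\Sigma(w)$, hence $\cone_\Phi(N^1(w))\subseteq \cone_\Phi(\Sigma(w))\subseteq \cone_\Phi(N(w))=N(w)$. But $N(w)=\cone_\Phi(N^1(w))$ by the definition of the basis, so all these inclusions are equalities; in particular $N(w)=\cone_\Phi(\Sigma(w))$, which says precisely that $w$ is low.

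For the converse, suppose $w\in L$, so $N(w)=\cone_\Phi(\Sigma(w))$ with $\Sigma(w)\subseteq\Sigma$. I want to show every $\beta\in N^1(w)$ already lies in $\Sigma$. Since $\beta\in N(w)=\cone_\Phi(\Sigma(w))$, I can write $\beta$ as a nonnegative linear combination of roots in $\Sigma(w)$ — here I would invoke that $\Sigma(w)\subseteq N(w)$, so every element of $\Sigma(w)$ is itself a nonnegative combination of the extreme rays $N^1(w)$ of $\cone(N(w))$. Combining, $\beta$ is a nonnegative combination of roots each of which is a nonnegative combination of elements of $N^1(w)$. Because $\mathbb R\beta$ is an \emph{extreme} ray of $\cone(N(w))$, a standard convexity argument (an extreme ray appearing in a conic combination of vectors of the cone forces all those vectors to be multiples of $\beta$, or at least forces $\beta$ to be a positive multiple of one of them) shows that $\beta$ must be a positive multiple of some $\gamma\in\Sigma(w)$; since both are roots in the same ray and roots in an affine root system on a ray are comparable only via $\mathbb Z_{>0}$-scaling issues, I would conclude $\beta=\gamma\in\Sigma(w)\subseteq\Sigma$.

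The main obstacle is making the extreme-ray argument rigorous in this setting: I must be careful that $\cone(N(w))$ is a genuinely convex polyhedral (or at least pointed) cone so that "extreme ray" behaves as expected, and that passing between $\beta$ and a scalar multiple stays within $\Phi^+$ — i.e. that if $\mathbb R\beta$ is an extreme ray and $\gamma\in N(w)$ lies on it, then in fact $\gamma=\beta$ (not a proper multiple). For affine root systems the positive multiples of a positive root $\alpha+k\delta$ that are again roots are limited, so this should follow, but it is the point that needs the affine-root-system structure of \S\ref{ss:Aff1} rather than pure convexity. I would lean on the characterization $N^1(w)=\{\alpha+k\delta\in\Phi^+\mid \ell(s_{\alpha+k\delta}w)=\ell(w)-1\}$ and the minimality statement ($N^1(w)$ is the inclusion-minimal subset with $N(w)=\cone_\Phi(N^1(w))$) to close the gap cleanly: minimality of $N^1(w)$ together with $N(w)=\cone_\Phi(\Sigma(w)\cap N^1\text{-generated part})$ forces $N^1(w)$ to be contained in any generating set drawn from $N(w)$ that is "irredundant", and $\Sigma(w)$ supplies such a set.
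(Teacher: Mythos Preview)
Your approach is essentially the paper's, but you take a long detour before arriving at the one line that actually does the work. The paper's proof of the converse is literally: since $w\in L$, $N(w)=\cone_\Phi(\Sigma(w))=\cone_\Phi(N^1(w))$, and since $N^1(w)$ is \emph{the} inclusion-minimal subset of $N(w)$ with this property, $N^1(w)\subseteq\Sigma(w)\subseteq\Sigma$. That's it. The minimality statement you cite at the very end is not a tool to ``close the gap'' after an extreme-ray argument; it \emph{is} the argument, and it makes the extreme-ray discussion, the pointedness of the cone, and the ``two roots on the same ray'' issue all unnecessary.

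One small correction: your final sentence says minimality forces $N^1(w)$ into any \emph{irredundant} generating set and that $\Sigma(w)$ ``supplies such a set''. In general $\Sigma(w)$ is \emph{not} irredundant, so if you truly needed irredundancy this would be a gap. You don't: the meaning of ``the inclusion-minimal subset'' here is that $N^1(w)$ is contained in \emph{every} subset $X\subseteq N(w)$ with $N(w)=\cone_\Phi(X)$, redundant or not (this is exactly what the extreme-ray description guarantees, and is what the paper takes for granted from \cite[\S4.2]{DyHo16}). So drop the irredundancy clause and the argument is complete.

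For the forward direction you are also correct but again longer than needed: since $N(w)=\cone_\Phi(N^1(w))$ always holds and $N^1(w)\subseteq\Sigma$ by hypothesis, the definition of low element is satisfied with $X=N^1(w)$ directly; there is no need to pass through $\Sigma(w)$.
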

\begin{proof} If $N^1(w)\subseteq \Sigma$, then $w\in L$ by definition of low elements. Assume now that $w\in L$, so 
$
N(w)=\cone_\Phi(\Sigma(w))=\cone_\Phi(N^1(w)).
$
Since $N^1(w)$ is the inclusion-minimal subset with that property, we have  
$
N^1(w)\subseteq \Sigma(w)\subseteq \Sigma.
$
\end{proof}

\subsection{Left and right descent sets} \label{ss:RightLeft}
The basis of inversion sets gives a useful interpretation of the right and left descent sets. The {\em left descent set  of $w\in W$} is the set:
$$
D_L(w)=\{s\in S\mid \ell(sw)=\ell(w)-1\} = \{s\in S\mid \alpha_s\in \Delta\cap N(w)\},
$$
where $\alpha_s\in\Delta$ is the simple root such that $s=s_{\alpha_s}$ in Eq.~(\ref{eq:Ref2}). The {\em set of left descent roots of $w$} is:
$$
ND_L(w)=\Delta\cap N(w)\subseteq N^1(w).
$$
In other words,  $\alpha+k\delta\in ND_L(w)$ if and only if $H_{\alpha+k\delta}$ is a wall of  $A_\circ$ that separates $A_\circ$ from $w\cdot A_\circ$.  Similarly, the {\em right descent set  of $w\in W$} is the set:
$$
D_R(w)=\{s\in S\mid \ell(ws)=\ell(w)-1\} = \{s\in S\mid w(\alpha_s)\in \Phi^-\}.
$$
Let $s\in D_R(w)$, then there is a reduced word for $w$ ending with $s$: $w=us$ with $u\in W$ and $\ell(w)=\ell(u)+1$.  Therefore:
$$
N(u)=N(w)\setminus\{u(\alpha_s)\}. 
$$
The {\em set of right descent roots of $w$}, which is a subset of $N^1(w)$, is
\begin{eqnarray*}
ND_R(w)&=& -w(\{\alpha_s\mid s\in D_R(w)\}) \\
&=& \{\alpha+k\delta\in N(w)\mid \exists u\in W,\ N(u)=N(w)\setminus \{\alpha+k\delta\}\}.
\end{eqnarray*}
In other words, $\alpha+k\delta \in ND_R(w)$ if and only if  $H_{\alpha+k\delta}$  is a wall of $w\cdot A_\circ$ that separates $A_\circ$ from $w\cdot A_\circ$. 

The following proposition gives a useful relationship between the sets of right descent roots of an element $w\in W$ and of one of its maximal suffixes. 

\begin{prop}\label{prop:Desc} We have  $ND_R(sw)= s(ND_R(w)\setminus\{\alpha_s\})$ for all $w\in W$ and $s\in D_L(w)$.
\end{prop}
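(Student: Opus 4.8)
The plan is to reduce the statement to the standard cocycle property of inversion sets and then transport everything through the bijection induced by $s$. The two inputs are: (i) for a reduced product, $\ell(xy)=\ell(x)+\ell(y)$, one has $N(xy)=N(x)\sqcup x\bigl(N(y)\bigr)$; and (ii) the simple reflection $s$ is an involution that permutes $\Phi^+\setminus\{\alpha_s\}$ and sends $\alpha_s$ to $-\alpha_s$. Applying (i) to the factorization $w=s\cdot(sw)$, which is reduced since $s\in D_L(w)$, gives
$$
N(w)=\{\alpha_s\}\sqcup s\bigl(N(sw)\bigr),\qquad\text{equivalently}\qquad N(sw)=s\bigl(N(w)\setminus\{\alpha_s\}\bigr),
$$
and more generally the same identity holds with $w$ replaced by any $v$ with $\alpha_s\in N(v)$; dually, if $\alpha_s\notin N(v)$ then $N(sv)=\{\alpha_s\}\sqcup s\bigl(N(v)\bigr)$ from the reduced factorization $sv=s\cdot v$.

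First I would observe that $\alpha_s\notin N(sw)$ (as $\ell(s\cdot sw)>\ell(sw)$), so $\alpha_s$ lies neither in $ND_R(sw)$ nor in $s\bigl(ND_R(w)\setminus\{\alpha_s\}\bigr)$, the latter because $s$ maps $N(w)\setminus\{\alpha_s\}\subseteq\Phi^+\setminus\{\alpha_s\}$ into $\Phi^+\setminus\{\alpha_s\}$. Since $\beta\mapsto s(\beta)$ is a bijection from $N(w)\setminus\{\alpha_s\}$ onto $N(sw)$, it then remains to prove, for every $\gamma\in N(w)\setminus\{\alpha_s\}$ with image $\beta:=s(\gamma)$, the equivalence $\beta\in ND_R(sw)\Longleftrightarrow\gamma\in ND_R(w)$, using the description that $\gamma\in ND_R(u)$ iff $\gamma\in N(u)$ and $N(u)\setminus\{\gamma\}=N(u')$ for some $u'\in W$.

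For the implication $\gamma\in ND_R(w)\Rightarrow\beta\in ND_R(sw)$: choose $v$ with $N(v)=N(w)\setminus\{\gamma\}$; since $\gamma\neq\alpha_s$ we still have $\alpha_s\in N(v)$, so the displayed identity applied to $v$ yields $N(sv)=s\bigl(N(v)\setminus\{\alpha_s\}\bigr)=s\bigl(N(w)\setminus\{\alpha_s\}\bigr)\setminus\{s(\gamma)\}=N(sw)\setminus\{\beta\}$, hence $\beta\in ND_R(sw)$. Conversely, choose $u$ with $N(u)=N(sw)\setminus\{\beta\}=s\bigl(N(w)\setminus\{\alpha_s,\gamma\}\bigr)$; then $N(u)\subseteq s(\Phi^+\setminus\{\alpha_s\})=\Phi^+\setminus\{\alpha_s\}$, so $\alpha_s\notin N(u)$ and the dual identity gives $N(su)=\{\alpha_s\}\sqcup s\bigl(N(u)\bigr)=\{\alpha_s\}\sqcup\bigl(N(w)\setminus\{\alpha_s,\gamma\}\bigr)=N(w)\setminus\{\gamma\}$, hence $\gamma\in ND_R(w)$.

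I expect the only real point of care to be the repeated, direction-dependent use of the cocycle identity with the auxiliary elements $v$ and $u$: each invocation requires knowing whether $s$ is a left descent of that auxiliary element, and this is controlled precisely by the two elementary facts $\gamma\neq\alpha_s$ and $N(u)\subseteq\Phi^+\setminus\{\alpha_s\}$; everything else is bookkeeping. As a cross-check one can keep the geometric picture in mind: the walls of $sw\cdot A_\circ$ are the $s$-images of the walls of $w\cdot A_\circ$, and such an image separates $sw\cdot A_\circ$ from $A_\circ$ exactly when the original wall separated $w\cdot A_\circ$ from $A_\circ$ and was not $H_{\alpha_s}$ — which is what the formula records.
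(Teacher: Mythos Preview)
Your proof is correct. You work with the ``wall'' characterization $ND_R(w)=\{\gamma\in N(w)\mid N(w)\setminus\{\gamma\}=N(u)\text{ for some }u\}$ and push everything through the cocycle identity $N(sv)=\{\alpha_s\}\sqcup s(N(v))$ (or its inverse), checking in each direction whether $s$ is a left descent of the auxiliary element. The paper takes a different route: it uses the other description $ND_R(w)=-w(\{\alpha_r\mid r\in D_R(w)\})$ and first establishes the identity $D_R(sw)=D_R(w)\setminus(\{w^{-1}sw\}\cap S)$ on the level of simple right descents, via the exchange condition; the formula for $ND_R$ then drops out by applying $-sw(\cdot)$ to both sides. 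Your argument is a bit more uniform and stays entirely inside inversion sets, while the paper's is shorter once one has the descent-set identity but requires that extra lemma on $D_R$. Both are standard; the geometric cross-check you mention at the end is exactly the intuition behind your approach.
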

\begin{proof} Since $s\in D_L(w)$, we have $w=su$ for some $u\in W$ and $\ell(su)=\ell(u)+1$. We first show that $D_R(u) = D_R(w)\setminus(\{w^{-1}sw\}\cap S)$. Let $r\in D_R(u)$, then $u=vr$ for some $v\in W$ and $\ell(u)=\ell(v)+1$. So $w=svr$ with $\ell(w)=\ell(v)+2$. Thus $r\in D_R(w)$. If $r=w^{-1}sw$ then $wr=sw=s(su)=u$. Therefore $w=ur$ and  $r\notin D_R(u)$, a contradiction. So $D_R(u) \subseteq  D_R(w)\setminus(\{w^{-1}sw\}\cap S)$. Now let $r\in  D_R(w)\setminus(\{w^{-1}sw\}\cap S)$. By the exchange condition, we either have $wr=sur=u$ or $r\in D_R(u)$. The first case implies $w^{-1}sw=(u^{-1}s)sw=u^{-1}w=r\in S$, a contradiction. So $r\in D_R(u)$ and we have the desired equality.  Now, by definition of $ND_R(\cdot)$, we have
\begin{eqnarray*}
ND_R(sw)&=&ND_R(u)\\
&=& -u(\{\alpha_r\mid r\in D_R(u)\}) 
= -u(\{\alpha_r\mid r\in D_R(w)\}\setminus(\{w^{-1}(\alpha_s)\}\cap \Delta))\\
&=& -sw(\{\alpha_r\mid r\in D_R(w)\}\setminus(\{w^{-1}(\alpha_s)\}\cap \Delta))\\
&=& s(ND_R(w)\setminus\{\alpha_s\}).
\end{eqnarray*}
\end{proof}

%%%%%%%%%%%%%%%%
%%%%%%%%%%%%%%%
\section{Shi arrangements in affine Weyl groups}\label{se:Shi} Let $(W,S)$ be an affine Coxeter system with underlying Weyl group $W_0$. The {\em Shi arrangement of $(W,S)$} is the affine hyperplane arrangement constituted of the small hyperplanes of $(W,S)$: 
$$
\shi(W,S) = \{H_\beta\mid \beta \in \Sigma\}.
$$
A {\em Shi region of $(W,S)$} is a connected component of the complement of the Shi arrangement in $V_0$, i.e., a connected component of
$$
V_0\setminus \bigcup_{H\in \shi(W,S)} H.
$$

%%%
\subsection{Separation sets of Shi regions} Shi regions correspond to equivalence classes of the relation $\sim_\shi$ over $W$ given by $u\sim_\shi v$ if and only if both $u\cdot A_\circ$ and $v\cdot A_\circ$ are contained in the same Shi region.  These equivalence classes have a useful interpretation in terms of separation sets.

\begin{defi}
 The \emph{separation set} of a Shi region $\mathcal R$ is the set of hyperplanes in $\shi(W,S)$ that separate $\mathcal R$ from $A_\circ$. We denote the \emph{inversion set of the Shi region} $\mathcal R$ by:
$$
\Sigma(\mathcal R) = \{\alpha \in \Sigma\mid H_\alpha \textrm{ separates } \mathcal R \textrm{ from } A_\circ\}.
$$
\end{defi}

\begin{prop}\label{prop:ShiSmall} 
Let $w \in W$ and $s \in S.$  Let $\mathcal R$ be a Shi region such that $w\cdot A_\circ \subseteq \mathcal R$.  Then 
\begin{enumerate}
\item  $\Sigma(\mathcal R)=\Sigma(w)$.  In particular $u \sim_\shi v$ if and only if $\Sigma(u) = \Sigma(v)$.
\item If $\alpha_s\in \Sigma(\mathcal R)$ then there exists a Shi region $\mathcal R'$ such that $s\cdot \mathcal R' \subseteq \mathcal R$.
\end{enumerate}
\end{prop}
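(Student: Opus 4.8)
The plan is to derive (1) from the standard dictionary between inversion sets and alcove walls, and then to obtain (2) from (1) together with Proposition~\ref{prop:S0} and its analogue for the affine simple reflection.

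For (1): recall that a positive root $\beta\in\Phi^+$ lies in $N(w)$ if and only if $H_\beta$ separates $w\cdot A_\circ$ from $A_\circ$ — the alcove description of inversion sets already in use in \S\ref{ss:RightLeft} and Remark~\ref{rem:use}(2). Now fix $\alpha\in\Sigma$. Since $H_\alpha\in\shi(W,S)$ and $\mathcal R$ is a connected component of the complement of $\shi(W,S)$, the set $\mathcal R$ is disjoint from $H_\alpha$, hence contained in one of its open half-spaces; as $w\cdot A_\circ\subseteq\mathcal R$ and $A_\circ$ is an alcove (so also disjoint from $H_\alpha$), the hyperplane $H_\alpha$ separates $\mathcal R$ from $A_\circ$ if and only if it separates $w\cdot A_\circ$ from $A_\circ$. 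Combining, $\alpha\in\Sigma(\mathcal R)$ iff $\alpha\in N(w)$, i.e. $\Sigma(\mathcal R)=N(w)\cap\Sigma=\Sigma(w)$. For the ``in particular'' clause: if $u\sim_\shi v$ they lie in a common Shi region $\mathcal R$, whence $\Sigma(u)=\Sigma(\mathcal R)=\Sigma(v)$; conversely, a Shi region is determined by its separation set — it is precisely the intersection of the open half-spaces of the $H_\alpha$, $\alpha\in\Sigma$, selected by which of these hyperplanes lie on the $A_\circ$-side, and such an intersection is convex, hence a single connected component — so $\Sigma(u)=\Sigma(v)$ forces $u\cdot A_\circ$ and $v\cdot A_\circ$ into the same region.

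For (2): by (1), $\alpha_s\in\Sigma(\mathcal R)=\Sigma(w)\subseteq N(w)$, and since $\alpha_s\in\Delta$ this means $s\in D_L(w)$, so $w=sw'$ with $w'=sw$ and $\ell(w)=\ell(w')+1$. Let $\mathcal R'$ be the Shi region containing $w'\cdot A_\circ$; then $w\cdot A_\circ=s\cdot(w'\cdot A_\circ)\subseteq s\cdot\mathcal R'$. It thus suffices to show that $s\cdot\mathcal R'$ meets no hyperplane of $\shi(W,S)$: being connected, it then lies in a single Shi region, which must be $\mathcal R$ since it contains $w\cdot A_\circ\subseteq\mathcal R$. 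Let $\beta_s$ be the unique small root other than $\alpha_s$ with $H_{\beta_s}$ parallel to $H_{\alpha_s}$ (so $\beta_s=\delta-\alpha_s$ if $s\in S_0$, and $\beta_s=\alpha_0$ if $s=s_{\delta-\alpha_0}$). Proposition~\ref{prop:S0}, together with the identical computation for $s=s_{\delta-\alpha_0}$ via Eq.~(\ref{eq:Ref2}) (using $\langle\alpha_0^\vee,\beta\rangle\in\{0,1\}$ for $\beta\in\Phi_0^+\setminus\{\alpha_0\}$), shows $s(\Sigma\setminus\{\alpha_s,\beta_s\})=\Sigma\setminus\{\alpha_s,\beta_s\}$. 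Hence $s$ permutes the hyperplanes $H_\gamma$, $\gamma\in\Sigma\setminus\{\alpha_s,\beta_s\}$, and fixes $H_{\alpha_s}$ setwise, so $s\cdot\mathcal R'$ avoids each of these (since $\mathcal R'$ avoids their $s$-images, which are again Shi hyperplanes). The only remaining possibility is that $s\cdot\mathcal R'$ meets $H_{\beta_s}$; but $H_{\beta_s}$ lies strictly on the $A_\circ$-side of $H_{\alpha_s}$ (it is $\{\langle x,\alpha_s\rangle=1\}$ when $s\in S_0$, and $\{\langle x,\alpha_0\rangle=0\}$ when $s=s_{\delta-\alpha_0}$), so $s\cdot H_{\beta_s}$ lies strictly on the opposite side of $H_{\alpha_s}$, while $\alpha_s\notin N(w')$ (because $\ell(sw')>\ell(w')$) forces $w'\cdot A_\circ$, hence the connected set $\mathcal R'$, onto the $A_\circ$-side of $H_{\alpha_s}$; therefore $\mathcal R'\cap s\cdot H_{\beta_s}=\emptyset$ and $s\cdot\mathcal R'$ avoids $H_{\beta_s}$ as well.

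The step I expect to be the main obstacle is this last point of (2): the reflection $s$ does \emph{not} preserve the Shi arrangement — it moves the single small hyperplane $H_{\beta_s}$ off the arrangement — so one cannot simply invoke ``$s\cdot\mathcal R'$ is a Shi region''. The fix is the observation that $s$ pushes $H_{\beta_s}$ strictly past its own mirror $H_{\alpha_s}$, so the image cannot reach the half-space where $\mathcal R'$ lives; the only additional care needed is verifying the analogue of Proposition~\ref{prop:S0} for the affine reflection $s_{\delta-\alpha_0}$ (with $\alpha_0$ as the omitted partner root), which is a direct computation with Eq.~(\ref{eq:Ref2}).
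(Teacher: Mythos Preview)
Your proof is correct, and for part~(1) it is essentially the paper's argument (you even supply the converse direction of the ``in particular'' clause explicitly, which the paper leaves implicit).

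For part~(2), however, your route is genuinely different from the paper's. The paper argues element-by-element: for any $u$ with $u\sim_\shi sw$ it uses Lemma~\ref{lem:DH} (the Brink--Howlett transition formula $\Sigma(su)=\{\alpha_s\}\sqcup s(\Sigma(u))\cap\Sigma$) to compute $\Sigma(su)=\Sigma(w)$, whence $su\sim_\shi w$; this shows all alcoves of $\mathcal R'$ map into $\mathcal R$. Your argument is purely geometric: you observe that $s$ permutes the Shi hyperplanes except for the single parallel partner $H_{\beta_s}$, and dispose of $H_{\beta_s}$ by the side argument that $s\cdot H_{\beta_s}$ lies beyond $H_{\alpha_s}$ while $\mathcal R'$ does not. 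The paper's approach is shorter and uniform in $s\in S$ (no split into $s\in S_0$ versus $s=s_{\delta-\alpha_0}$), at the cost of invoking Lemma~\ref{lem:DH}. Your approach is more self-contained and makes transparent \emph{why} $s\cdot\mathcal R'$ stays inside a single Shi region, but requires the auxiliary extension of Proposition~\ref{prop:S0} to the affine simple reflection (which, as you note, rests on the standard fact $\langle\alpha_0^\vee,\beta\rangle\in\{0,1\}$ for $\beta\in\Phi_0^+\setminus\{\alpha_0\}$).
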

\begin{proof} 

\noindent \textbf{(1)} Since $w\cdot A_\circ\subseteq \mathcal R$, any hyperplane that separates $\mathcal R$ from $A_\circ$ also separates $w\cdot A_\circ$ from $A_\circ$.  Therefore $\Sigma(\mathcal R)\subseteq N(w)\cap\Sigma=\Sigma(w)$.  Conversely, assume that $H_\alpha$, $\alpha\in\Sigma$, separates $w\cdot A_\circ$ from $A_\circ$. Since $w\cdot A_\circ\subseteq \mathcal R$ and $\mathcal R$ is a connected component of $V_0\setminus \bigcup_{H\in \shi(W,S)} H$,  $H_\alpha$ separates $w\cdot A_\circ$ from $\mathcal R$.

\noindent \textbf{(2)} Let $w\in W$ such that $w\cdot A_\circ\subseteq \mathcal R$. Then, by (1), $\alpha_s\in \Sigma(\mathcal R)=\Sigma(w)\subseteq N(w)$. Therefore  $\ell(sw)<\ell(w)$. Let $\mathcal R'$ be the Shi region such that $sw\cdot A_\circ \subseteq \mathcal R'$. Let $u\in W$ such that $u\sim_\shi sw$, we need to show that $su\sim_\shi w$. Since $\alpha_s\notin N(sw)$, we have $\alpha_s\notin \Sigma(sw)=\Sigma(u)$. In other words, $\ell(su)>\ell(u)$. Then, by Lemma~\ref{lem:DH}, we have $\Sigma(w)=\{\alpha_s\}\sqcup s(\Sigma(sw))\cap \Sigma$,  moreover since $u\sim_\shi sw$ it follows that $\{\alpha_s\}\sqcup s(\Sigma(sw))\cap \Sigma = \{\alpha_s\}\sqcup s(\Sigma(u))\cap \Sigma$.  Then we obtain$ \{\alpha_s\}\sqcup s(\Sigma(u))\cap \Sigma = \{\alpha_s\}\sqcup s(\Sigma(s(su)))\cap \Sigma = \Sigma(su)$ since $\ell(su)>\ell(u).$
Thus $su\sim_\shi w$.
\end{proof}

%%%%
 \subsection{Minimal elements in Shi regions}\label{ss:LowShi}

In \cite[Proposition 7.1]{Shi88}, Shi shows that any Shi region $\mathcal R$ on an affine Weyl group $W$ contains a unique minimal element, that is,  there is a unique element $w\in W$ with $w \cdot A_\circ \subseteq \mathcal{R}$ and  such that 
$ \ell(w) \leq \ell(u)$, for all $u\sim_\shi w .$
We denote by $L_\shi$ the set of minimal elements in Shi regions.  Shi shows in \cite[Proposition 7.2]{Shi88} that $w\in L_\shi$ if and only if for all $\alpha\in \Phi_0^+$ we have 
\begin{equation}\label{eq:Minstar}
|k(w,\alpha)|=\min\{|k(g,\alpha)|\mid g\sim_\shi w\} .
\end{equation}
In \cite[Proposition 7.3]{Shi88}, Shi characterizes the minimal elements in Shi regions by: 
\begin{equation}\label{eq:star}
  w\in L_\shi \iff w\nsim_\shi ws, \ \forall s\in D_R(w), 
\end{equation}
where $u\nsim_\shi v$ means that  $u$ and $v$ are not in the same Shi region. This characterization implies that any element in a given Shi region is greater or equal in the right weak order than its corresponding minimal element, i.e., the minimal element in a Shi region is the prefix of any other element in that Shi region.

%\begin{remark} For general Coxeter systems, it is an open problem to prove that there are such unique minimal elements in Shi regions. % Therefore, our strategy to prove Theorem \ref{thm:main} cannot be extend to general Coxeter groups.  %We don't know either if a generalization of Shi coefficients can be done for any Coxeter group $W$.  An idea would be to replace the Shi vectors by some sequences indexed by maximal dihedral reflection subgroups of $W$. However answering this question seems out of reach at the moment.
%\end{remark}

\begin{prop}\label{prop:WeakShi} Let $w\in L_\shi$ and $g\in W$ such that $g\sim_\shi w$, then $N(w)\subseteq N(g)$. 
\end{prop}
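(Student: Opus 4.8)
The goal is to show that if $w \in L_\shi$ and $g \sim_\shi w$, then $N(w)\subseteq N(g)$; in other words the minimal element of a Shi region is a prefix (in the right weak order) of every element of that region. The plan is to use the characterization of minimal elements recalled in Eq.~(\ref{eq:star}), namely $w\in L_\shi \iff w\nsim_\shi ws$ for all $s\in D_R(w)$, together with an induction on $\ell(g)-\ell(w)$.

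First I would dispose of the base case: if $\ell(g)=\ell(w)$ then, since $w$ is the unique minimal-length element in its Shi region (Shi, \cite[Proposition 7.1]{Shi88}) and $g\sim_\shi w$, we must have $g=w$, so $N(w)\subseteq N(g)$ trivially. For the inductive step, assume $\ell(g)>\ell(w)$ and that the statement holds for all elements of the region of strictly smaller length. Pick $s\in D_R(g)$, so $g=g's$ with $\ell(g')=\ell(g)-1$ and $N(g')=N(g)\setminus\{g(\alpha_s)\}$. The key point to establish is that $g'\sim_\shi g$, i.e. that $g's$ and $g'$ lie in the same Shi region: if so, then $g'\sim_\shi w$, the inductive hypothesis gives $N(w)\subseteq N(g')\subseteq N(g)$, and we are done. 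So everything reduces to showing $g\sim_\shi gs$ for a well-chosen $s\in D_R(g)$ (or, more carefully, to producing a descending chain inside the region from $g$ down to $w$).

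The mechanism for this is exactly Eq.~(\ref{eq:star}) read contrapositively: since $g$ is \emph{not} the minimal element of its region (as $\ell(g)>\ell(w)$), $g\notin L_\shi$, hence there exists $s\in D_R(g)$ with $g\sim_\shi gs$. Set $g_1=gs$; then $g_1\sim_\shi w$ and $\ell(g_1)=\ell(g)-1$. If $g_1\ne w$ iterate: since $g_1\sim_\shi w$ and $\ell(g_1)>\ell(w)$, again $g_1\notin L_\shi$, so there is $s_1\in D_R(g_1)$ with $g_1\sim_\shi g_1s_1$, etc. Because lengths strictly decrease and are bounded below by $\ell(w)$, after finitely many steps we reach $w$ itself (using again uniqueness of the minimal element in a region to identify the terminal element with $w$). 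This produces a chain $g=g_0 > g_1 > \dots > g_m = w$ in the right weak order with all $g_i\sim_\shi w$; along such a chain the inversion sets are nested, $N(g_{i+1})\subseteq N(g_i)$, since each step removes exactly one root, so in particular $N(w)=N(g_m)\subseteq N(g_0)=N(g)$.

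The main obstacle is the justification that the descending chain actually terminates at $w$ rather than at some other element: one needs that any element $v\sim_\shi w$ with $v\ne w$ has some $s\in D_R(v)$ with $vs\sim_\shi v$, which is precisely the contrapositive of Eq.~(\ref{eq:star}), valid since $v\notin L_\shi$ because the region has a \emph{unique} minimal element (Shi, \cite[Proposition 7.1]{Shi88}) and $v$ is not it. Thus the chain can always be continued as long as the current element is not $w$, and since $\ell$ strictly decreases it must reach $w$. One should also note the observation already made in \S\ref{ss:LowShi} right after Eq.~(\ref{eq:star}): the minimal element of a Shi region is a prefix of every other element of that region — the present proposition is just the inversion-set restatement of that fact, so it suffices to spell out the induction above. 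I would present the argument in that order: base case by uniqueness, inductive step by Eq.~(\ref{eq:star}) to descend one step while staying in the region, conclude by nestedness of inversion sets along a right-weak-order chain.
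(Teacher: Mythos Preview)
Your proposal is correct and follows essentially the same approach as the paper: induction on $\ell(g)-\ell(w)$, with the inductive step using the contrapositive of Eq.~(\ref{eq:star}) to find $s\in D_R(g)$ with $gs\sim_\shi g$, then applying the hypothesis to $gs$ and using $N(gs)\subseteq N(g)$. One small typo: the root removed is $-g(\alpha_s)=gs(\alpha_s)$, not $g(\alpha_s)$.
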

\begin{proof} We proceed by induction on $m=\ell(g)-\ell(w)$. Since $w\in L_\shi$ is of minimal length in its Shi region, $m\in \mathbb N$. If $m=0$ then $g=w$ by  unicity of $w\in L_\shi$, so $w=g$ and $N(g)=N(w)$. If $m>0$, then $g\notin L_\shi$ so there is $s\in D_R(g)$ such that $gs\sim_\shi w$ by Eq.~(\ref{eq:star}). Since $\ell(gs)-\ell(w)=\ell(g)-1-\ell(w)=m-1$, we conclude by induction that $N(w)\subseteq N(gs)$. The result follows from the fact that $N(gs) = N(g)\setminus\{-g(\alpha_s)\}$ (see \S\ref{ss:RightLeft}).
\end{proof}
 
 \begin{prop}\label{prop:LLshi} We have $L\subseteq L_\shi$.
 \end{prop}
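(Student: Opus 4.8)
The plan is to take a low element $w\in L$ and verify Shi's characterization \eqref{eq:star}, namely that $w\nsim_\shi ws$ for every $s\in D_R(w)$. Fix such an $s$; since $s\in D_R(w)$, there is a reduced word $w=us$ with $\ell(u)=\ell(w)-1$, and $N(u)=N(w)\setminus\{w(\alpha_s)\}$ where we abbreviate $\beta:=-w(\alpha_s)\in N(w)$; in fact $\beta\in ND_R(w)\subseteq N^1(w)$. By Proposition \ref{prop:ShiSmall}(1), showing $w\nsim_\shi ws=u$ amounts to showing $\Sigma(w)\neq\Sigma(u)$, i.e.\ that the unique root removed from $N(w)$ to get $N(u)$, namely $\beta$, is a small root: $\beta\in\Sigma$.

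The key input is Proposition \ref{prop:Basis}: since $w\in L$, its basis satisfies $N^1(w)\subseteq\Sigma$. Because $\beta=-w(\alpha_s)\in ND_R(w)\subseteq N^1(w)$, we immediately get $\beta\in\Sigma$. Then $\beta\in\Sigma(w)=N(w)\cap\Sigma$, while $\beta\notin N(u)$ forces $\beta\notin\Sigma(u)$. Hence $\Sigma(u)\subsetneq\Sigma(w)$, so $\Sigma(w)\neq\Sigma(ws)$, and therefore $w\nsim_\shi ws$ by Proposition \ref{prop:ShiSmall}(1). As $s\in D_R(w)$ was arbitrary, \eqref{eq:star} gives $w\in L_\shi$.

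The only point requiring a little care is the identification $ND_R(w)\subseteq N^1(w)$, which is recorded in \S\ref{ss:RightLeft} (the set of right descent roots is a subset of the basis of the inversion set), together with the explicit description $ND_R(w)=-w(\{\alpha_s\mid s\in D_R(w)\})$; these let us pin down precisely which root disappears when passing to the maximal prefix $u=ws$. I do not anticipate a genuine obstacle here — the proposition is essentially a bookkeeping consequence of Proposition \ref{prop:Basis}, Shi's criterion \eqref{eq:star}, and Proposition \ref{prop:ShiSmall}(1); the one thing to double-check is that the element removed from $N(w)$ really lies in the basis $N^1(w)$ and not merely in $N(w)$, which is exactly why one works with $ND_R(w)$ rather than an arbitrary element of $N(w)$.
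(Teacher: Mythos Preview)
Your argument is correct and follows essentially the same route as the paper's proof: both use $ND_R(w)\subseteq N^1(w)\subseteq\Sigma$ (Proposition~\ref{prop:Basis}) to see that the root $-w(\alpha_s)$ removed from $N(w)$ when passing to $ws$ is small, whence $\Sigma(ws)\neq\Sigma(w)$ and Eq.~\eqref{eq:star} applies. One trivial typo: in your first display you should have $N(u)=N(w)\setminus\{-w(\alpha_s)\}$ rather than $N(w)\setminus\{w(\alpha_s)\}$, consistent with your definition $\beta:=-w(\alpha_s)$.
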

 \begin{proof}   Let $w\in L$,  then $ND_R(w)\subseteq N^1(w)\subseteq \Sigma$, by Proposition~\ref{prop:Basis}. Let $s\in D_R(w)$, then $-w(\alpha_s)\in ND_R(w)\subseteq \Sigma$ and $N(ws)=N(w)\setminus\{-w(\alpha_s)\}$. Hence $\Sigma(ws)=\Sigma(w)\setminus \{-w(\alpha_s)\}$. Therefore, by Proposition~\ref{prop:ShiSmall},   $w\nsim_\shi ws$. We conclude that $w\in L_\shi$ by Eq.~(\ref{eq:star}). 
  \end{proof}
 
The next proposition shows that $L_\shi$ is closed under taking suffixes

\begin{prop}\label{prop:ShiS} Let $w\in L_\shi$, then $sw \in L_\shi$ for all $s\in D_L(w)$.  Moreover, if $g\sim_\shi sw$ then $sg\sim_\shi w$.  
\end{prop}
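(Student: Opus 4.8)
The plan is to prove the two assertions in the opposite of the stated order: first the \emph{moreover} statement (which, as it turns out, only uses $s\in D_L(w)$ and not $w\in L_\shi$), and then to deduce $sw\in L_\shi$ from it via Shi's criterion~(\ref{eq:star}). The whole argument is built from Proposition~\ref{prop:ShiSmall}, Lemma~\ref{lem:DH} and~(\ref{eq:star}) together with elementary length bookkeeping; indeed the \emph{moreover} part is essentially the computation already carried out inside the proof of Proposition~\ref{prop:ShiSmall}(2).

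For the \emph{moreover} part, I would fix $g$ with $g\sim_\shi sw$ and first observe that $s\notin D_L(sw)$ (since $\ell(s\cdot sw)=\ell(w)=\ell(sw)+1$), so $\alpha_s\notin N(sw)$, hence $\alpha_s\notin\Sigma(sw)=\Sigma(g)$ by Proposition~\ref{prop:ShiSmall}(1). As $\alpha_s\in\Delta\subseteq\Sigma$ (every simple root is small), this forces $\alpha_s\notin N(g)$, i.e.\ $s\in D_L(sg)$. Now I would apply Lemma~\ref{lem:DH} twice: to the pair $(sg,s)$, legitimate since $\ell(sg)>\ell(g)$, to get $\Sigma(sg)=\{\alpha_s\}\sqcup s(\Sigma(g))\cap\Sigma$; and to the pair $(w,s)$, legitimate since $s\in D_L(w)$, to get $\Sigma(w)=\{\alpha_s\}\sqcup s(\Sigma(sw))\cap\Sigma$. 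Since $\Sigma(g)=\Sigma(sw)$, the right-hand sides coincide, so $\Sigma(sg)=\Sigma(w)$, and hence $sg\sim_\shi w$ by Proposition~\ref{prop:ShiSmall}(1).

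To get $sw\in L_\shi$, I would verify the criterion~(\ref{eq:star}), i.e.\ that $sw\nsim_\shi sws'$ for every $s'\in D_R(sw)$. Arguing by contradiction, suppose $sw\sim_\shi sws'$ for some such $s'$, and set $g:=sws'$. Then $g\sim_\shi sw$, so the \emph{moreover} part gives $sg=ws'\sim_\shi w$. A length count then shows $s'\in D_R(w)$: indeed $\ell(sws')=\ell(sw)-1=\ell(w)-2$, so $\ell(ws')\le 1+\ell(sws')=\ell(w)-1$, which forces $\ell(ws')=\ell(w)-1$. Thus $w\sim_\shi ws'$ with $s'\in D_R(w)$, contradicting $w\in L_\shi$ through~(\ref{eq:star}); hence no such $s'$ exists and~(\ref{eq:star}) yields $sw\in L_\shi$. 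I do not anticipate a genuine obstacle: the only points requiring care are checking the length inequalities needed to invoke Lemma~\ref{lem:DH} on the correct pairs, and the short computation establishing $s'\in D_R(w)$.
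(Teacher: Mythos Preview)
Your proof is correct. The ``moreover'' computation is identical to the paper's: both apply Lemma~\ref{lem:DH} twice to compare $\Sigma(sg)$ with $\Sigma(w)$ via the common intermediate $\{\alpha_s\}\sqcup s(\Sigma(sw))\cap\Sigma$, after first checking $\alpha_s\notin\Sigma(g)$ so that $\ell(sg)>\ell(g)$.

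The organization differs, however, and the deduction of $sw\in L_\shi$ goes through a different characterization. The paper proves $sw\in L_\shi$ first: it picks the minimal element $w'$ in the Shi region of $sw$, runs the same Lemma~\ref{lem:DH} computation with $g=w'$ to get $sw'\sim_\shi w$, and then uses $\ell(sw')=\ell(w')+1\le\ell(sw)+1=\ell(w)$ together with the \emph{uniqueness} of the minimal element to force $sw'=w$, hence $sw=w'\in L_\shi$. You instead prove the ``moreover'' part first and then feed it into Shi's criterion~(\ref{eq:star}): assuming $sw\sim_\shi sws'$ for some $s'\in D_R(sw)$, you deduce $ws'\sim_\shi w$ and $s'\in D_R(w)$, contradicting~(\ref{eq:star}) for $w$. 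Both routes are short and rest on the same ingredients; yours has the mild conceptual advantage that it isolates the ``moreover'' statement as the reusable core and makes explicit that it does not itself require $w\in L_\shi$, while the paper's route avoids the small length bookkeeping needed to show $s'\in D_R(w)$.
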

\begin{proof} Let $s\in D_L(w)$.  Let $w'\in L_\shi$ be such that $w'\sim_\shi sw$. Then $\Sigma(w')=\Sigma(sw)$. By Lemma~\ref{lem:DH}, since $\ell(w)>\ell(sw)$,  we have 
$
\Sigma(w) = \{\alpha_s\}\cup (\Sigma\cap s(\Sigma(sw)) = \{\alpha_s\}\cup (\Sigma\cap s(\Sigma(w')) =\Sigma(sw').
$
So $w\sim_\shi sw'$ by Proposition \ref{prop:ShiSmall}. But $\ell(sw')=\ell(w')+1\leq \ell(sw)+1= \ell(w)$ implying $sw'=w$ by minimality of $w$.  Hence $sw\in L_\shi$.  Now let $g\sim_\shi sw$, then  $\Sigma(g)=\Sigma(sw)$. Proceeding as above, we obtain
$
\Sigma(w)= \{\alpha_s\}\cup (\Sigma\cap s(\Sigma(sw)) = \{\alpha_s\}\cup (\Sigma\cap s(\Sigma(g))=\Sigma(sg),
$
which means that $sg\sim_\shi w$.
\end{proof}

%%%%%%%%
\subsection{Theorem~\ref{thm:main} implies Conjecture~\ref{conj:Main} and Conjecture~\ref{conj:DH}}\label{ss:ShiSmall}  To prove Theorem~\ref{thm:main}, we only have to prove the converse of Proposition~\ref{prop:LLshi}, that is, to prove that $L_\shi \subseteq L$. Furthermore this statement is also enough to show both conjectures. 

\smallskip
Firstly, as announced in the introduction, Theorem~\ref{thm:main}  implies Conjecture~\ref{conj:Main}.

\begin{prop} Assume that  $L_\shi \subseteq L$, then $L=L_\shi$ and any region $\mathcal R$ of $\shi(W,S)$ contains a unique low element, which is the unique element of minimal length in $\mathcal R$.
\end{prop}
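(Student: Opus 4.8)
The plan is to show that, under the assumption $L_\shi\subseteq L$, the four desired conclusions follow quickly from the results already established in the excerpt. Since Proposition~\ref{prop:LLshi} gives $L\subseteq L_\shi$ unconditionally, the assumption $L_\shi\subseteq L$ immediately yields the equality $L=L_\shi$. It then remains to prove the statement about Shi regions: each region $\mathcal R$ of $\shi(W,S)$ contains a unique low element, and that element is the unique element of minimal length in $\mathcal R$.

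For the existence and uniqueness of the minimal-length element in $\mathcal R$, I would simply invoke Shi's result recalled at the start of \S\ref{ss:LowShi} (from \cite[Proposition 7.1]{Shi88}): every Shi region contains a unique minimal-length element, i.e. $L_\shi$ meets each region in exactly one element. So fix a Shi region $\mathcal R$ and let $w$ be its unique minimal-length element; by definition $w\in L_\shi$. Using $L_\shi=L$ we get $w\in L$, so $\mathcal R$ contains at least one low element, namely its minimal-length element.

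For uniqueness of the low element in $\mathcal R$, suppose $w'\in L$ also satisfies $w'\cdot A_\circ\subseteq\mathcal R$, i.e. $w'\sim_\shi w$. By Proposition~\ref{prop:LLshi} (or directly by $L=L_\shi$), $w'\in L_\shi$, so $w'$ is a minimal-length element of its Shi region $\mathcal R$; but by Shi's uniqueness statement there is only one such element in $\mathcal R$, hence $w'=w$. Thus $\mathcal R$ contains exactly one low element, and it coincides with the unique minimal-length element of $\mathcal R$.

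This argument is essentially a bookkeeping exercise assembling Proposition~\ref{prop:LLshi} and Shi's Proposition~7.1; there is no real obstacle once the hard inclusion $L_\shi\subseteq L$ is granted, which is exactly the content of Theorem~\ref{thm:main} to be proved later. The one point that deserves a sentence of care is the equivalence between ``$w'\cdot A_\circ\subseteq\mathcal R$'' and ``$w'\sim_\shi w$'', which is immediate from the definition of $\sim_\shi$ recalled in \S\ref{se:Shi}. (A parallel remark, which I would include if the proposition's scope extends that far, is that this also settles Conjecture~\ref{conj:DH}: combining $L=L_\shi$ with Proposition~\ref{prop:ShiSmall}(1) shows that $w\mapsto\Sigma(w)$ restricted to $L$ has image all of $\Lambda$, since every small inversion set $\Sigma(w)$ equals $\Sigma(\mathcal R)$ for the region $\mathcal R$ containing $w$, and $\mathcal R$'s minimal element lies in $L_\shi=L$; injectivity is \cite[Proposition 3.26 (ii)]{DyHo16}.)
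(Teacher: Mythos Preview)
Your proof is correct and follows essentially the same outline as the paper: the equality $L=L_\shi$ comes from Proposition~\ref{prop:LLshi}, existence of a low element in each region comes from the minimal element $w\in L_\shi\subseteq L$, and then uniqueness. The one small difference is in the uniqueness step: you deduce $w'\in L_\shi$ from $L=L_\shi$ and invoke Shi's uniqueness of the minimal element, whereas the paper argues directly from the definition of low elements---if $g,w\in L$ lie in the same region then $\Sigma(g)=\Sigma(w)$ by Proposition~\ref{prop:ShiSmall}(1), hence $N(g)=\cone_\Phi(\Sigma(g))=\cone_\Phi(\Sigma(w))=N(w)$ and $g=w$. The paper's version has the minor advantage that it shows each Shi region contains \emph{at most} one low element unconditionally (not using $L_\shi\subseteq L$), but for the proposition as stated both arguments are equally valid.
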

\begin{proof} The first statement follows from Proposition~\ref{prop:LLshi}. Let $\mathcal R$ be a Shi region. Since $L_\shi \subseteq L$, $\mathcal R$ contains a low element $w$, which is of minimal length. The fact that $w$ is the unique low element in $\mathcal R$ follows by definition and Proposition~\ref{prop:ShiSmall}. Assume that $g\in L$ is another low element in $\mathcal R$, then $g\sim_\shi w$. Therefore $\Sigma(w)=\Sigma(g)$ and by definition of low elements:
 $
 N(g)=\cone_\Phi(\Sigma(g))= \cone_\Phi(\Sigma(w))=N(w).
 $
 Therefore $g=w$, since any element of $W$ is uniquely determined by its inversion set. 
\end{proof}

Secondly, to prove Conjecture~\ref{conj:DH} it is enough to show that any Shi region contains a low element, which is implied by the statement $L_\shi \subseteq L$.

\begin{prop}\label{cor:Conj} Assume that any Shi region contains a low element, then the map $\sigma: L \to \Lambda$ is a bijection. 
\end{prop}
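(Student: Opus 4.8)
The plan is to exploit the fact, established in Proposition~\ref{prop:ShiSmall}(1), that the small inversion set $\Sigma(w)$ depends only on the Shi region containing $w\cdot A_\circ$, so that the equivalence classes of $\sim_\shi$ are in bijection with the Shi regions and also with the set $\Lambda$ of small inversion sets via $w\mapsto \Sigma(w)$. We already know from \cite{DyHo16} (recalled just before Conjecture~\ref{conj:DH}) that the restriction $\sigma\colon L\to\Lambda$ is injective, so only surjectivity remains to be shown. First I would fix an arbitrary small inversion set $\Lambda_0\in\Lambda$ and pick some $v\in W$ with $\Sigma(v)=\Lambda_0$; let $\mathcal R$ be the Shi region with $v\cdot A_\circ\subseteq\mathcal R$, so that $\Sigma(\mathcal R)=\Sigma(v)=\Lambda_0$ by Proposition~\ref{prop:ShiSmall}(1).

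By hypothesis $\mathcal R$ contains a low element: there is $w\in L$ with $w\cdot A_\circ\subseteq\mathcal R$, hence $w\sim_\shi v$. Then Proposition~\ref{prop:ShiSmall}(1) gives $\Sigma(w)=\Sigma(\mathcal R)=\Lambda_0$, so $\sigma(w)=\Lambda_0$. This shows $\sigma\colon L\to\Lambda$ hits every element of $\Lambda$, and together with the injectivity recalled above, $\sigma$ is a bijection.

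I do not anticipate a genuine obstacle here: the statement is essentially a bookkeeping consequence of Proposition~\ref{prop:ShiSmall}(1) plus the already-known injectivity of $\sigma$. The only point requiring a little care is to make sure the hypothesis is used in the right direction — namely that it supplies, for each region, at least one low element whose small inversion set then automatically equals that of the region — rather than trying to prove surjectivity directly from the definition of low elements. One could also phrase the argument without naming $v$: since $\Lambda=\{\Sigma(\mathcal R)\mid \mathcal R\text{ a Shi region}\}$ and every $\mathcal R$ contains some $w\in L$ with $\Sigma(w)=\Sigma(\mathcal R)$, the map $\sigma$ is onto.
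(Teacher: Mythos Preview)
Your argument is correct and essentially identical to the paper's own proof: both recall injectivity of $\sigma$ from \cite[Proposition~3.26(ii)]{DyHo16}, then for an arbitrary $\Sigma(v)\in\Lambda$ use the hypothesis to find a low element $u$ in the Shi region of $v$ and conclude $\sigma(u)=\Sigma(u)=\Sigma(v)$ via Proposition~\ref{prop:ShiSmall}(1).
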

\begin{proof} We know already that $\sigma$ is injective by \cite[Proposition 3.26 (ii)]{DyHo16}. Let ${\Sigma(w)\in \Lambda}$ with $w\in W$. Since any Shi region contains a low element, there is $u\in L$ such that $u\sim_\shi w$. Therefore, by Proposition~\ref{prop:ShiSmall}, $\Sigma(u)=\Sigma(w)$. The map $\sigma$ is therefore surjective.
\end{proof}

\subsection{Shi's admissible sign types and Shi regions}\label{se:ShiSigns}\label{ss:Sign} 

In order to show that $L_\shi\subseteq L$ and then to prove Theorem \ref{thm:main} (as explained in \S\ref{ss:ShiSmall}) we need now to survey Shi's admissible sign types.

In \cite{Shi88}, Shi uses the parametrization of the alcoves of $(W,S)$ in order to describe the Shi regions of $\shi(W,S)$.  We follow here Shi's notations from \cite{Shi88}. Let $\overline{\mathscr S}$ be the set of $\Phi_0^+$-tuples over the set $\{-,0,+\}$; its elements are called {\em sign types}. For $w\in W$, the function $\zeta: W\to \overline{\mathscr S}$ is defined as follows:
 $\zeta(w)=(X(w,\alpha))_{\alpha\in\Phi_0^+}$ where
\begin{equation}\label{eq:Xw}
 X(w,\alpha) = 
 \left\{
 \begin{array}{cc}
 + &\textrm{if } k(w,\alpha)>0\\
 0  &\textrm{if } k(w,\alpha)=0\\
 - &\textrm{if } k(w,\alpha)<0 .
 \end{array}
 \right.
\end{equation}

 \begin{defi} \label{def adm sign types} The {\em sign type $X(\mathcal R)=(X(\mathcal R,\alpha))_{\alpha\in\Phi_0^+}$ of a Shi region $\mathcal R$} is  $X(\mathcal R)=\zeta(w)$ for some $w\in W$ such that $w\cdot A_\circ\subseteq \mathcal R$. A sign type $X=(X_\alpha)_{\alpha\in\Phi_0^+}$ is said to be {\em admissible} if $X$ is in the image of $\zeta$.  We denote by $\mathcal{S}_{\Phi}$ the set of admissible sign types of $\Phi$.  
 \end{defi}

The following theorem sums up results from~\cite[Theorem 2.1 and \S6]{Shi88}.

 \begin{thm}[Shi, 1988]\label{thm:Shi} Let $W$ be an irreducible affine Weyl group.
 \begin{enumerate}
 \item  The rank $2$ admissible sign types are precisely those in Figure~\ref{fig:SA2}, Figure~\ref{fig:SB2} and Figure~\ref{fig:SG2}.
 \item A sign type  $X=(X_\alpha)_{\alpha\in\Phi_0^+}$ is admissible if and only if for any irreducible root  subsystem $\Psi$ of rank $2$ in $\Phi_0$, the restriction $(X_\alpha)_{\alpha \in \Psi^+}$ of $X$ to $\Psi^+$ is one of the rank $2$ admissible sign types.  
 \item For $u,v\in W$, we have $u\sim_\shi v$ if and only if $\zeta(u)=\zeta(v)$. 
 \end{enumerate}
 \end{thm}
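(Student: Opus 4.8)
The statement to prove is Theorem~\ref{thm:Shi} (Shi, 1988), which I'll treat as a known result whose proof I only sketch. Here is how I would approach it.

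The plan is to prove parts (1), (2), and (3) more or less independently, relying on the Shi parameterization $w\mapsto (k(w,\alpha))_{\alpha\in\Phi_0^+}$ and on Lemma~\ref{k groupe fini}, Proposition~\ref{descente indice k}, and Lemma~\ref{lemma k<0}. For part (3), I would argue as follows. Two alcoves $w\cdot A_\circ$ and $v\cdot A_\circ$ lie in the same Shi region exactly when no small hyperplane separates them, that is, exactly when for every $\alpha\in\Phi_0^+$ the alcoves lie on the same side of $H_{\alpha}$ and of $H_{\alpha,1}$ (the two ``small'' walls attached to $\alpha$, coming from the small roots $\alpha$ and $\delta-\alpha$ in $\Sigma$). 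By Remark~\ref{rem:use}(2), the integer $k(w,\alpha)$ records the signed number of hyperplanes $H_{\alpha,j}$ separating $w\cdot A_\circ$ from $A_\circ$; so the relative position of $w\cdot A_\circ$ with respect to $\{H_{\alpha},H_{\alpha,1}\}$ is determined by which of the three intervals $k<0$, $k=0$, $k>0$ the value $k(w,\alpha)$ falls into --- i.e. by $X(w,\alpha)$. Hence $w\sim_\shi v$ iff $X(w,\alpha)=X(v,\alpha)$ for all $\alpha$, which is part (3). The converse inclusion (that any admissible sign type gives a nonempty, connected region) is the substantive content and is where part (2) is really needed.

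For part (1), the rank~2 case, I would simply enumerate: in type $\tilde A_2$, $\tilde B_2$, $\tilde G_2$ one writes down the finite list of regions of the Shi arrangement (there are $(h+1)^2$ of them, which is $16$, $25$, $49$ respectively), computes $\zeta$ on each via the explicit $k(w,\alpha)$ values --- exactly the kind of computation illustrated in Examples~\ref{ex:A2} and~\ref{ex:B2} --- and checks that the resulting sign vectors are precisely those drawn in Figures~\ref{fig:SA2}, \ref{fig:SB2}, \ref{fig:SG2}. This is a finite verification; no cleverness is required, only care with the long/short root conventions and with the identification of small roots $\Sigma=\Phi_0^+\sqcup(\delta-\Phi_0^+)$.

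Part (2) is the heart of the theorem and the main obstacle. One direction is immediate: if $X=\zeta(w)$ is admissible, then for any rank-2 irreducible subsystem $\Psi\subseteq\Phi_0$, restricting to $\Psi^+$ corresponds geometrically to intersecting the alcove arrangement with the subspace spanned by $\Psi$ (or rather projecting), so the restricted sign vector is realized by the image of $w$ in the rank-2 subgroup $\langle s_\alpha\mid\alpha\in\Psi\rangle$ and hence is one of the lists from part (1). The reverse direction --- that a sign type all of whose rank-2 restrictions are admissible is globally realized --- is proved by an inductive/combinatorial argument: one shows that the local compatibility conditions (the rank-2 sign types) are exactly the inequalities cutting out the Shi regions, so that any tuple satisfying them all corresponds to a genuine nonempty region. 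Concretely, I would use the characterization of $L_\shi$-elements and the recursion $k(sw,\alpha)=k(w,s\alpha)+k(s,\alpha)$ from Proposition~\ref{descente indice k}(1) to build, by induction on the ``size'' of the sign type (e.g. $\sum_\alpha|k|$, or distance from $A_\circ$), an element realizing it; at each step one peels off a simple reflection, and the rank-2 admissibility hypothesis guarantees the peeling is consistent. Verifying that the rank-2 conditions suffice --- i.e. that there are no genuinely higher-rank obstructions --- is the delicate point, and in Shi's original treatment this is where the bulk of the work in \cite[\S6]{Shi88} lies; I would not attempt to reproduce that case analysis here but instead cite it.
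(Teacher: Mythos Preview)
The paper does not prove Theorem~\ref{thm:Shi}; it simply cites it as a summary of results from \cite[Theorem 2.1 and \S6]{Shi88}. Your proposal correctly identifies this as a known result of Shi and offers a reasonable sketch in lieu of a full proof, ultimately deferring the delicate direction of part~(2) to Shi's original case analysis --- which is exactly what the paper does, only with fewer words.
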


\begin{figure}[h]
\includegraphics[scale=0.41]{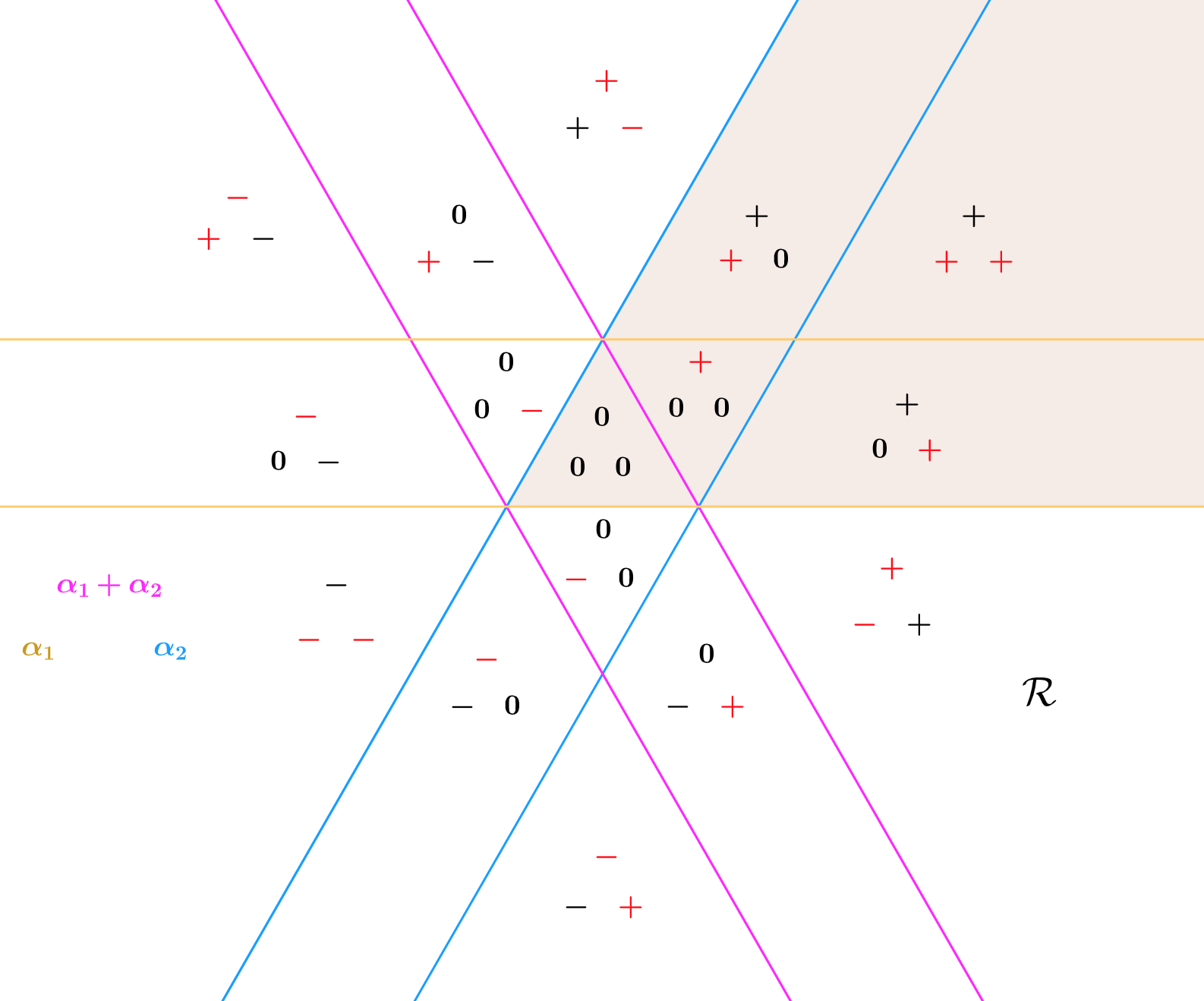}
\caption{The Shi arrangement of type $\tilde A_2$. Each Shi region is labelled with its admissible sign type. 
The labels $X(\mathcal R,\alpha)$ for a Shi region $\mathcal R$ are indicated in each alcove with the parameterization given at the lefthand side of the figure.  The  signs colored in red indicate the descent-roots (see Definition \ref{descent wall/root}) of the corresponding Shi region}
\label{fig:SA2}
\end{figure}

\begin{figure}[h!]
\includegraphics[scale=1.2]{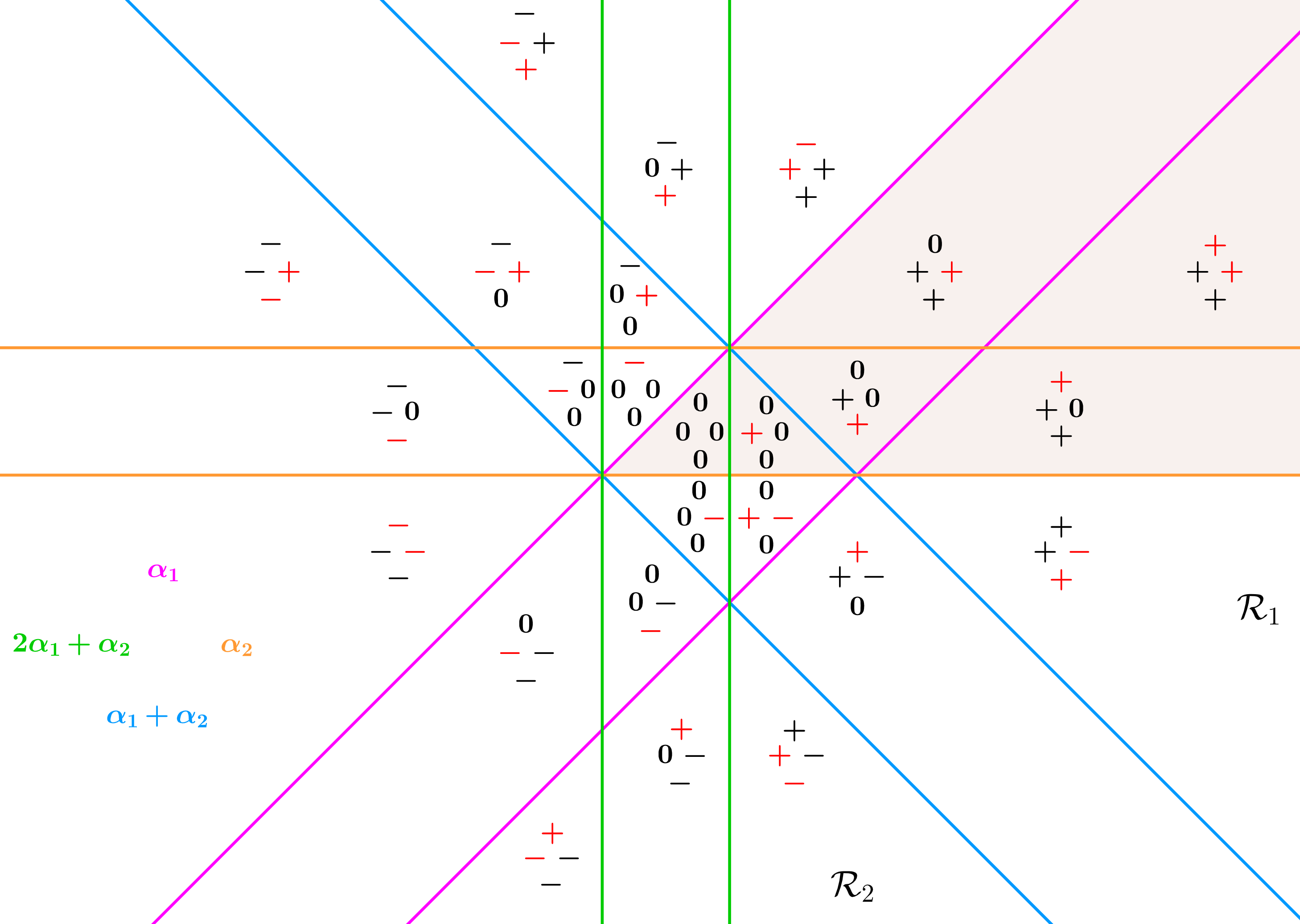}
\caption{The Shi arrangement of type $\tilde B_2$. Each Shi region is labelled with its admissible sign type. 
The labels $X(\mathcal R,\alpha)$ for a Shi region $\mathcal R$ are indicated in each alcove with the parameterization given at the righthand side of the figure. The signs  colored in red indicate the  descent-roots (see Definition \ref{descent wall/root}) of the corresponding Shi region.}
\label{fig:SB2}
\end{figure}

\begin{ex}\label{ex:General} For $(W,S)$ of type $\tilde A$ the irreducible root subsystems of rank $2$ are easy to describe. Set $V=\mathbb{R}^{n+1}$ with the usual orthonormal basis $\{e_1,\dots, e_{n+1}\}$.  We abbreviate $e_i - e_j$ by $e_{ij}:=e_i - e_j$. A way to describe the roots  of $A_n$ is by
$
\Phi=\{\pm (e_{ij}) ~| ~ 1\leq i < j \leq n+1 \}
$
with simple system 
$\Delta = \{ e_{i,i+1} ~| ~ 1\leq i <  n +1\},$
and  positive roots 
$
\Phi^+=\{e_{ij} ~| ~ 1\leq i < j \leq n+1 \}.
$
Thus, the irreducible root subsystems of rank 2 of $\Phi^+$ are of the form 
$
\{e_{ik}, e_{kj}, e_{ij}~|~1\leq i<k<j\leq n+1\}.
$
A convenient way to write the Shi coordinates $k(w,e_{ij})$ of an element $w\in W$ (or the admissible signs $X(\mathcal R,e_{ij})$ of a Shi region $\mathcal R$) is by placing them in a triangular shape as shown on of Figure~\ref{fig:position} for Type $A_4$. 
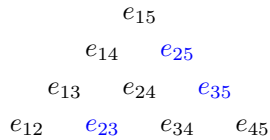
\begin{figure}[h!]
\begin{center}
\begin{tikzpicture} 
\node at (0,0) {$e_{12}$} ;
\node at (1,0) {$\textcolor{blue}{e_{23}}$} ;
\node at (2,0) {$e_{34}$} ;
\node at (3,0) {$e_{45}$} ;
%\node at (4,0) {$e_{56}$} ;
\node at (0.5,0.5 ) {$e_{13}$} ;
\node at (1.5, 0.5 ) {$e_{24}$} ;
\node at (2.5, 0.5) {$\textcolor{blue}{e_{35}}$} ;
%\node at (3.5, 0.5) {$e_{46}$} ;
\node at (1,1) {$e_{14}$} ;
\node at (2,1) {$\textcolor{blue}{e_{25}}$} ;
%\node at (3,1) {$e_{36}$} ;
\node at (1.5,1.5 ) {$e_{15}$} ;
%\node at (2.5,1.5 ) {$e_{26}$} ;
%\node at (2,2) {$e_{16}$} ;
\end{tikzpicture}
\end{center}
\caption{Triangular presentation of the  positive root system in Type $A_4$. The blue roots show a irreducible root subsystem of rank 2.}
\label{fig:position}
\end{figure}

Then, using this presentation it is easy to check if a sign is admissible using Theorem~\ref{thm:Shi}~{\it (1)}: we only have to look at all the subtriangles  corresponding to  irreducible root subsystems of rank $2$ and check in Figure~\ref{fig:SA2} if the sign type is admissible using  Theorem~\ref{thm:Shi}. An example in rank $5$ is given in Figure~\ref{fig:Gen}:   on the lefthand side, the sign type is admissible, while on the righthand side the sign type is not admissible because the triplet in red does not belong to the rank $2$ admissible sign types given in Figure~\ref{fig:SA2}.

\begin{figure}[h!]
\begin{center}
\begin{tikzpicture} 
\node at (0,0) {$+$} ;
\node at (1,0) {$-$} ;
\node at (2,0) {$+$} ;
\node at (3,0) {$-$} ;
%\node at (4,0) {$+$} ;
\node at (0.5,0.5 ) {$-$} ;
\node at (1.5, 0.5 ) {$0$} ;
\node at (2.5, 0.5) {$-$} ;
%\node at (3.5, 0.5) {$-$} ;
\node at (1,1) {$+$} ;
\node at (2,1) {$-$} ;
%\node at (3,1) {$+$} ;
\node at (1.5,1.5 ) {$-$} ;
%\node at (2.5,1.5 ) {$0$} ;
%\node at (2,2) {$+$};

\node at (6,0) {$+$} ;
\node at (7,0) {$\textcolor{red}{-}$} ;
\node at (8,0) {$+$} ;
\node at (9,0) {$-$} ;
%\node at (10,0) {$+$} ;
\node at (6.5,0.5 ) {$-$} ;
\node at (7.5, 0.5 ) {$0$} ;
\node at (8.5, 0.5) {$\textcolor{red}{-}$} ;
%\node at (9.5, 0.5) {$-$} ;
\node at (7,1) {$+$} ;
\node at (8,1) {$\textcolor{red}{+}$} ;
%\node at (9,1) {$+$} ;
\node at (7.5,1.5 ) {$-$} ;
%\node at (8.5,1.5 ) {$0$} ;
%\node at (8,2) {$+$} ;
\end{tikzpicture}
\end{center}
\caption{On the left side the sign type is admissible, while on the right side it is not because the triplet in red does not belong to the rank $2$ admissible sign types given in Figure~\ref{fig:SA2}.}
\label{fig:Gen}
\end{figure}
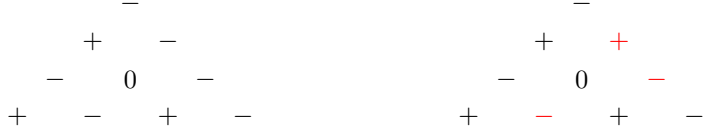
\end{ex}

\begin{remark}
The \say{game} of finding the admissible sign types in types $B,C, D$ is  more complicated because there is no triangular presentation of the positive roots.  An alternative was recently provided by Charles \cite{Charles22}.
\end{remark}

 \begin{prop}\label{prop:Shi} If $\mathcal R$ is a Shi region, then the separation set of $\mathcal R$ is:
 $$
 \Sigma(\mathcal R) = \{\alpha\in \Phi_0^+ \mid X(\mathcal R,\alpha)=-\}\sqcup\{\delta-\alpha\mid \alpha\in \Phi_0^+,\ X(\mathcal R,\alpha)=+\}.
 $$
 \end{prop}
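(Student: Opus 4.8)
The plan is to compute the separation set $\Sigma(\mathcal R)$ directly from the Shi parameterization, using Remark~\ref{rem:use}(2), which states that $|k(w,\alpha)|$ counts the hyperplanes parallel to $H_\alpha$ separating $w\cdot A_\circ$ from $A_\circ$, with a negative sign precisely when $H_\alpha$ itself is among them. Recall from Proposition~\ref{prop:ShiSmall}(1) that $\Sigma(\mathcal R)=\Sigma(w)=N(w)\cap\Sigma$ for any $w$ with $w\cdot A_\circ\subseteq \mathcal R$, and that $\Sigma=\Phi_0^+\sqcup(\delta-\Phi_0^+)$. So the task reduces to determining, for each $\alpha\in\Phi_0^+$, whether $\alpha\in N(w)$ and whether $\delta-\alpha\in N(w)$, purely in terms of the integer $k(w,\alpha)$.

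First I would fix $\alpha\in\Phi_0^+$ and recall from \S\ref{ss:Aff1} the dictionary between positive roots $\beta+k\delta\in\Phi^+$ and affine hyperplanes: $H_{\alpha+k\delta}=H_{-\alpha,k}$, and $\Phi^+=(\Phi_0^+ +\mathbb N\delta)\sqcup(\Phi_0^- +\mathbb N^*\delta)$. The small roots parallel to $H_\alpha$ are exactly $\alpha$ (with hyperplane $H_{-\alpha,0}=H_{\alpha,0}$) and $\delta-\alpha$ (with hyperplane $H_{-(-\alpha),1}=H_{\alpha,1}$, using $\delta-\alpha = (-\alpha)+1\cdot\delta$ with $-\alpha\in\Phi_0^-$). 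Now $\alpha\in N(w)$ if and only if $H_\alpha=H_{\alpha,0}$ separates $w\cdot A_\circ$ from $A_\circ$, which by Remark~\ref{rem:use}(2) happens if and only if $k(w,\alpha)<0$, i.e. $X(\mathcal R,\alpha)=-$. Similarly, $\delta-\alpha\in N(w)$ if and only if $H_{\alpha,1}$ separates $w\cdot A_\circ$ from $A_\circ$; since $A_\circ$ lies in the region $0<\langle x,\alpha_0\rangle<1$ hence on the same side as $\langle x,\alpha\rangle<1$ for all $\alpha\in\Phi_0^+$ (as $A_\circ\subseteq C_\circ$ and $\alpha\preceq\alpha_0$), the hyperplane $H_{\alpha,1}$ separates $w\cdot A_\circ$ from $A_\circ$ exactly when $\langle x,\alpha\rangle>1$ on $w\cdot A_\circ$, which by the counting interpretation means $k(w,\alpha)\ge 1$, i.e. $X(\mathcal R,\alpha)=+$. (One must note that $k(w,\alpha)<0$ and $k(w,\alpha)>0$ are mutually exclusive, so the two contributions are disjoint, matching the $\sqcup$ in $\Sigma=\Phi_0^+\sqcup(\delta-\Phi_0^+)$.)

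Assembling these two equivalences over all $\alpha\in\Phi_0^+$ gives exactly
$$
\Sigma(\mathcal R)=\{\alpha\in\Phi_0^+\mid X(\mathcal R,\alpha)=-\}\sqcup\{\delta-\alpha\mid \alpha\in\Phi_0^+,\ X(\mathcal R,\alpha)=+\},
$$
as claimed; the disjointness of the union is automatic from $\Sigma=\Phi_0^+\sqcup(\delta-\Phi_0^+)$. The main obstacle I anticipate is making the second equivalence fully rigorous: one needs to be careful that "the number of hyperplanes $H_{\alpha,j}$ with $1\le j\le k(w,\alpha)$ that separate $w\cdot A_\circ$ from $A_\circ$" is indeed $|k(w,\alpha)|$ when $k(w,\alpha)>0$, and in particular that $H_{\alpha,1}$ is among them — this is where the fact that $A_\circ$ sits in the slab $0\le\langle\cdot,\alpha\rangle\le 1$ (for every $\alpha\in\Phi_0^+$, not just $\alpha_0$) is used, together with the sign convention in Remark~\ref{rem:use}(2) that the negative sign is reserved for the case $H_\alpha=H_{\alpha,0}$ itself being a separating wall. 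Everything else is a direct translation through the dictionary of \S\ref{ss:Aff1} and Proposition~\ref{prop:ShiSmall}(1).
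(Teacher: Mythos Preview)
Your proof is correct and follows essentially the same approach as the paper: both fix $w$ with $w\cdot A_\circ\subseteq\mathcal R$, invoke Proposition~\ref{prop:ShiSmall}(1) to identify $\Sigma(\mathcal R)=\Sigma(w)$, and then use the sign of $k(w,\alpha)$ (via Remark~\ref{rem:use}(2) in your write-up, via a reference to the Shi parameterization in the paper) to decide which of $\alpha$ and $\delta-\alpha$ lies in $N(w)$. Your treatment of the $\delta-\alpha$ case is more detailed than the paper's one-line ``same line of reasoning,'' but the argument is the same.
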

 \begin{proof} Let $w\in W$ such that $w\cdot A_\circ\subseteq \mathcal R$. By Proposition~\ref{prop:ShiSmall}, we have $\Sigma(\mathcal R)=\Sigma(w)\subseteq N(w)$. Let $\alpha\in\Phi_0^+\subseteq \Sigma$. By Lemma \ref{k groupe fini} we have $\alpha\in N(w)\cap \Sigma=\Sigma(w)$ if and only if $k(w,\alpha)<0$. From Eq.~(\ref{eq:Xw}) we get that $\alpha\in \Sigma(w)$ if and only if $X(\mathcal R,\alpha)=-$. The same line of reasoning shows that $\delta-\alpha\in \Sigma(w)$ if and only if $X(\mathcal R,\alpha)=+$, which concludes the proof.
 \end{proof}

 \label{ex:SA2.1}  
  \begin{ex} We continue  Example~\ref{ex:A2}.  Consider the Shi region $\mathcal R$ in Figure~\ref{fig:SA2}. This region is the union of all the alcoves  in Figure~\ref{fig:A2} bounded by the small hyperplanes $H_{\alpha_1}$ and $H_{\delta-(\alpha_1+\alpha_2)}$. This region contains for instance the alcove $w\cdot A_\circ$ such that $k(w,\alpha_1) = -1$, $k(w,\alpha_2)=2$ and $k(w,\alpha_1+\alpha_2)=2$. Therefore $X(\mathcal R,\alpha_1)$ is the sign of $k(w,\alpha_1) = -1$, that is,  $X(\mathcal R,\alpha_1)=-$. Similarly, we obtain $X(\mathcal R,\alpha_2)=+$ and $X(\mathcal R,\alpha_1+\alpha_2)=+$. Finally, we have
  $
   \Sigma(\mathcal R) = \{\alpha_1,\delta-\alpha_2, \delta-(\alpha_1+\alpha_2)\}.
  $
\end{ex}

  \begin{ex}[Type $\tilde B_2$]\label{ex:SB2.1}  We continue Example~\ref{ex:B2}.  Consider the Shi regions $\mathcal R_1$ and $\mathcal R_2$ in Figure~\ref{fig:SB2}. We have
   $
   \Sigma(\mathcal R_1) = \{\alpha_1,\delta-\alpha_2, \delta-(\alpha_1+\alpha_2), \delta-(2\alpha_1+\alpha_2)\}
  $
  together with
   $
 \Sigma(\mathcal R_2) = \{\alpha_1,\delta-\alpha_2, \alpha_1+\alpha_2, \delta-(2\alpha_1+\alpha_2)\}.
  $
\end{ex}

%%%%%%%%
\section{Descent-roots and descent-walls of Shi regions}\label{se:Signs} 

We are now interested in {\em descent-wall} of a Shi region, that is, those walls of a Shi region $\mathcal R$ that separates $\mathcal R$ from the Shi region $A_\circ$. We show at the end of this section the first of the two core results (Lemma~\ref{lem:Key1}) for proving $L_\shi\subseteq L$ and therefore Theorem~\ref{thm:main}. 
\subsection{Walls and descents of Shi regions}   Following classical terminology, a {\em wall of a Shi region $\mathcal R$} is a hyperplane $H\in \shi(W,S)$ such that $H$ supports a facet of~$\mathcal R$.
 
 \begin{defi}[Descent-wall and descent-root] \label{descent wall/root} Let $\mathcal R$ be a Shi region.
 We say that a  wall $H$ of $\mathcal R$ is a {\em descent-wall} of $\mathcal R$ if $H$ separates $\mathcal R$ from $A_\circ$.  In other words, they represent the last hyperplane crossed in a gallery from $A_\circ$ to $\mathcal R$.   The {\em set of descent-roots of a Shi region $\mathcal R$} is
$$
\Sigma D(\mathcal R)=\{\alpha\in \Sigma(\mathcal R)\mid H_{\alpha} \textrm{ is a descent-wall of } \mathcal R\}.
$$
\end{defi}

\begin{defi}
Let $\mathcal{R}$ be a Shi region and let $\alpha \in \Phi_0^+$. We define 
$$
X_{\mathcal{R}}^\alpha :=(X^\alpha_\beta)_{\beta \in \Phi_0^+},$$ 
where $X^\alpha_{\alpha}=0$ and $X^\alpha_\beta=X(\mathcal R,\beta)$ for all $\beta\in \Phi_0^+\setminus\{\alpha\}$. If there is no possible confusion we often omit the $\mathcal{R}$ as subscript of $X_{\mathcal{R}}^\alpha$ and we just write $X^{\alpha}$.
\end{defi}

The following statement, which is an immediate consequence of the definitions,  gives a characterization of the descent-walls of a Shi region in terms of walls, sign types and Shi coefficients:   a wall $H_{\theta}$ is a descent-wall of a Shi region $\mathcal{R}$ if and only if there is a Shi region $\mathcal{R}'$ sharing a common wall with $\mathcal{R}$ such that one sign of $\mathcal R$ changes to a $0$ in the sign type of   $\mathcal{R}'$.  We  point out that the points a) and b) of Proposition \ref{cor:Key1} are of the utmost importance in the proof of out two key results: Lemma~\ref{lem:Key1} and the descent-wall theorem in \S\ref{se:Proof}. Recall also that $\mathcal{S}_{\Phi}$ is the set of Shi admissible sign types. 

\begin{prop}\label{cor:Key1} Let $\mathcal{R}$ be a Shi region with minimal element $w$.  Let $\alpha\in \Phi_0^+$ and let $\theta \in \{\alpha,\delta-\alpha\}$. The following statements are equivalent: 
\begin{enumerate}
\item $H_\theta$ is a descent-wall of $\mathcal R$;
\item there is a Shi region $\mathcal R'$ such that  $\Sigma(\mathcal R')=\Sigma(\mathcal R)\setminus\{\theta\}$;
\item the sign type $X_{\mathcal{R}}^{\alpha}$ is admissible, i.e., $X_{\mathcal{R}}^{\alpha}\in \mathcal{S}_{\Phi}$.
\end{enumerate}
In particular, the set of descent-roots of $\mathcal R$ is:
\begin{eqnarray*}
\Sigma D(\mathcal R)&=&\{\alpha\in \Phi_0^+ \mid X(\mathcal R,\alpha)=-, \ X^{\alpha}_{\mathcal R} \in \mathcal{S}_{\Phi} \} \\
&&\sqcup\ \{\delta-\alpha\mid \alpha\in \Phi_0^+,\ X(\mathcal R,\alpha)=+, \ X^{\alpha}_{\mathcal R} \in \mathcal{S}_{\Phi}\}.
\end{eqnarray*}
\end{prop}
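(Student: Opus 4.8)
The plan is to prove the equivalence $(1)\Leftrightarrow(2)\Leftrightarrow(3)$ and then derive the displayed formula for $\Sigma D(\mathcal R)$ as a direct combination of Proposition~\ref{prop:Shi} with the equivalence $(1)\Leftrightarrow(3)$. The first equivalence $(1)\Leftrightarrow(2)$ is essentially unwinding definitions: a wall $H_\theta$ of $\mathcal R$ is a descent-wall exactly when crossing it leads to a Shi region $\mathcal R'$ adjacent to $\mathcal R$ that is ``one step closer'' to $A_\circ$, which by Proposition~\ref{prop:ShiSmall}(1) and the translation of separation into small inversion sets means precisely $\Sigma(\mathcal R')=\Sigma(\mathcal R)\setminus\{\theta\}$. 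I would spell out that since $\theta\in\Sigma(\mathcal R)$ (as $H_\theta$ is a descent-wall, it separates $\mathcal R$ from $A_\circ$), and since two Shi regions sharing the wall $H_\theta$ have separation sets differing exactly by the small roots whose hyperplane is $H_\theta$ — here just $\{\theta\}$, because the small hyperplanes are pairwise non-parallel within a single $\pm$-class except for the pair $\alpha,\delta-\alpha$ which are distinct hyperplanes — crossing $H_\theta$ removes exactly $\theta$ from the separation set, giving $(1)\Rightarrow(2)$; conversely if such $\mathcal R'$ exists then $H_\theta$ is the unique hyperplane separating $\mathcal R$ from $\mathcal R'$, so it is a wall of both and a descent-wall of $\mathcal R$.

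For $(2)\Leftrightarrow(3)$ I would translate the separation set back into sign-type language via Proposition~\ref{prop:Shi}. If $\theta=\alpha\in\Phi_0^+$, then $\theta\in\Sigma(\mathcal R)$ forces $X(\mathcal R,\alpha)=-$, and the region $\mathcal R'$ with $\Sigma(\mathcal R')=\Sigma(\mathcal R)\setminus\{\alpha\}$ must have $X(\mathcal R',\alpha)=0$ and $X(\mathcal R',\beta)=X(\mathcal R,\beta)$ for all $\beta\neq\alpha$ — in other words $X(\mathcal R')=X^\alpha_{\mathcal R}$. Symmetrically, if $\theta=\delta-\alpha$, then $X(\mathcal R,\alpha)=+$ and again the required region has sign type $X^\alpha_{\mathcal R}$. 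So in both cases, a Shi region $\mathcal R'$ with $\Sigma(\mathcal R')=\Sigma(\mathcal R)\setminus\{\theta\}$ exists if and only if $X^\alpha_{\mathcal R}$ is an admissible sign type, because Shi's Theorem~\ref{thm:Shi}(3) guarantees that admissible sign types are in bijection with Shi regions and every admissible sign type is realized by some region, while Proposition~\ref{prop:Shi} recovers the separation set from the sign type; conversely an admissible $X^\alpha_{\mathcal R}$ produces a region whose separation set is computed by Proposition~\ref{prop:Shi} to be exactly $\Sigma(\mathcal R)\setminus\{\theta\}$. The one subtlety to check carefully is that changing a single coordinate from $-$ (resp.\ $+$) to $0$ changes the separation set by removing exactly $\alpha$ (resp.\ $\delta-\alpha$) and nothing else, which is immediate from the explicit description in Proposition~\ref{prop:Shi}.

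Finally, for the displayed formula I would combine Proposition~\ref{prop:Shi}, which writes $\Sigma(\mathcal R)$ as the disjoint union of $\{\alpha\in\Phi_0^+\mid X(\mathcal R,\alpha)=-\}$ and $\{\delta-\alpha\mid X(\mathcal R,\alpha)=+\}$, with the definition $\Sigma D(\mathcal R)=\{\alpha\in\Sigma(\mathcal R)\mid H_\alpha\text{ is a descent-wall}\}$ and the equivalence $(1)\Leftrightarrow(3)$ just established: $\alpha\in\Phi_0^+$ with $X(\mathcal R,\alpha)=-$ contributes to $\Sigma D(\mathcal R)$ iff $X^\alpha_{\mathcal R}\in\mathcal S_\Phi$, and $\delta-\alpha$ with $X(\mathcal R,\alpha)=+$ contributes iff $X^\alpha_{\mathcal R}\in\mathcal S_\Phi$. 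This yields the stated disjoint union verbatim.

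The main obstacle I anticipate is making the adjacency argument in $(1)\Leftrightarrow(2)$ fully rigorous — specifically, verifying that two Shi regions sharing a wall $H_\theta$ have separation sets differing precisely by $\{\theta\}$ (and not by a larger set of small roots lying on parallel hyperplanes), and that a descent-wall, being ``the last hyperplane crossed'' in a minimal gallery from $A_\circ$, corresponds exactly to removing a single element of $\Sigma(\mathcal R)$. This requires being careful about the geometry of the Shi arrangement: a wall of $\mathcal R$ is a single hyperplane supporting a facet, so crossing it moves to a unique adjacent region, and the small roots separating the two regions are exactly those whose hyperplane equals that single wall $H_\theta$; since each hyperplane $H_\theta$ corresponds to a unique small root $\theta$ (distinct small roots give distinct hyperplanes), the difference is the singleton $\{\theta\}$. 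Once this geometric point is nailed down, the rest is a clean translation through Proposition~\ref{prop:Shi} and Shi's Theorem~\ref{thm:Shi}.
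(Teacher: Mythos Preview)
Your proposal is correct and takes essentially the same approach as the paper. The paper itself offers no formal proof, stating instead that the proposition is ``an immediate consequence of the definitions'' and giving only the informal gloss that a descent-wall corresponds to an adjacent Shi region obtained by changing one sign of $X(\mathcal R)$ to $0$; your argument supplies exactly the details behind this, via the geometric adjacency interpretation for $(1)\Leftrightarrow(2)$ and the dictionary between separation sets and sign types (Proposition~\ref{prop:Shi} and Theorem~\ref{thm:Shi}) for $(2)\Leftrightarrow(3)$.
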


\begin{remark}\label{rem:Descent} In Proposition \ref{cor:Key1},   since $X(\mathcal R)$ is  admissible, it follows from Theorem~\ref{thm:Shi} that the sign type $X^\alpha_{\mathcal R}=(X^\alpha_\beta)_{\beta \in \Phi_0^+}$ is admissible if and only if $(X^\alpha_\beta)_{\beta \in \Psi^+}$ for any irreducible root subsystem $\Psi\subseteq \Phi_0$ of rank $2$ such that $\alpha\in \Psi$.
\end{remark}

\begin{ex}[Type $\tilde A_2$ - continuation of Example~\ref{ex:SA2.1}]\label{ex:SA2.2}  For each admissible sign type of a Shi region in Figure~\ref{fig:SA2}, we colored in red the signs  that correspond to descent-roots in $\Sigma(\mathcal R)$. For instance, for the Shi region $\mathcal R$ in Figure~\ref{fig:SA2}, we have $\Sigma D(\mathcal R) = \{\alpha_1, \delta-(\alpha_1+\alpha_2)\}$, thanks to Proposition~\ref{cor:Key1}. 
\end{ex}

  \begin{ex}[Type $\tilde B_2$ - continuation of Example~\ref{ex:SB2.1}]\label{ex:SB2.2} For each admissible sign type of a Shi region in Figure~\ref{fig:SB2}, we colored in red the signs that corresponds to descent-roots in $\Sigma(\mathcal R)$. For instance, for the Shi regions $\mathcal R_1$ and $\mathcal R_2$ in Figure~\ref{fig:SB2}, we have, thanks to Proposition~\ref{cor:Key1}:
   $$
   \Sigma D(\mathcal R_1) = \{\alpha_2, \delta-(\alpha_1+\alpha_2)\}\text{~}\text{~and~}\text{~}
 \Sigma D(\mathcal R_2) = \{\alpha_1+\alpha_2, \delta-(2\alpha_1+\alpha_2)\}.
  $$
\end{ex}

\medskip

 \begin{prop}\label{prop:LLLshi}   Let $w\in W$, with Shi region $\mathcal R$, then  $w \in L_\shi$ if and only if  $ND_R(w)\subseteq  \Sigma D(\mathcal R)$. In particular the set of descent-walls of a Shi region contains those of its minimal element.
 \end{prop}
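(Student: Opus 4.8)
The plan is to run everything through Shi's criterion recorded in Eq.~(\ref{eq:star}): an element $w\in W$ lies in $L_\shi$ if and only if $w\nsim_\shi ws$ for every $s\in D_R(w)$. First I would recall from \S\ref{ss:RightLeft} that for $s\in D_R(w)$ one has $\ell(ws)<\ell(w)$, that $N(ws)=N(w)\setminus\{-w(\alpha_s)\}$, and that $-w(\alpha_s)$ ranges exactly over $ND_R(w)$ as $s$ ranges over $D_R(w)$. Intersecting with $\Sigma$ and writing $\theta=-w(\alpha_s)$, this gives $\Sigma(ws)=\Sigma(w)\setminus\{\theta\}$ when $\theta\in\Sigma$ and $\Sigma(ws)=\Sigma(w)$ otherwise. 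Combining with Proposition~\ref{prop:ShiSmall}(1), which says $u\sim_\shi v\iff\Sigma(u)=\Sigma(v)$ and $\Sigma(\mathcal R)=\Sigma(w)$, the criterion becomes: $w\in L_\shi$ if and only if $\theta\in\Sigma$ for every $\theta\in ND_R(w)$, i.e. $ND_R(w)\subseteq\Sigma$ (here one uses that $\theta\in N(w)$, so $\theta\in\Sigma$ forces $\theta\in\Sigma(w)$ and hence $\Sigma(ws)\ne\Sigma(w)$).

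Next I would upgrade this from ``$\subseteq\Sigma$'' to ``$\subseteq\Sigma D(\mathcal R)$'' using the descent-wall characterization. For the forward implication, assume $w\in L_\shi$ and fix $\theta\in ND_R(w)$, say $\theta=-w(\alpha_s)$; by the previous paragraph $\theta\in\Sigma$, hence $\theta\in N(w)\cap\Sigma=\Sigma(w)=\Sigma(\mathcal R)$, and the Shi region $\mathcal R'$ containing $ws\cdot A_\circ$ satisfies $\Sigma(\mathcal R')=\Sigma(ws)=\Sigma(w)\setminus\{\theta\}=\Sigma(\mathcal R)\setminus\{\theta\}$ by Proposition~\ref{prop:ShiSmall}(1). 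Since $\theta\in\Sigma$ is of the form $\alpha$ or $\delta-\alpha$ with $\alpha\in\Phi_0^+$, the equivalence (1)$\Leftrightarrow$(2) of Proposition~\ref{cor:Key1} then tells us that $H_\theta$ is a descent-wall of $\mathcal R$; together with $\theta\in\Sigma(\mathcal R)$ this gives $\theta\in\Sigma D(\mathcal R)$. Hence $ND_R(w)\subseteq\Sigma D(\mathcal R)$. For the converse, $ND_R(w)\subseteq\Sigma D(\mathcal R)\subseteq\Sigma(\mathcal R)\subseteq\Sigma$ already places us in the situation of the first paragraph, so $w\in L_\shi$.

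Finally, the ``in particular'' clause drops out: if $w$ is the minimal element of $\mathcal R$ then $w\in L_\shi$, hence $ND_R(w)\subseteq\Sigma D(\mathcal R)$; recalling from \S\ref{ss:RightLeft} that $\theta\in ND_R(w)$ precisely when $H_\theta$ is a wall of the alcove $w\cdot A_\circ$ separating it from $A_\circ$, this says exactly that every descent-wall of the alcove of the minimal element is a descent-wall of $\mathcal R$.

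I do not expect a real obstacle here: the proof is bookkeeping with the inversion set $N(w)$, the small inversion set $\Sigma(w)$, and Shi's criterion Eq.~(\ref{eq:star}), with a single appeal to Proposition~\ref{cor:Key1}. The one place that needs a moment of care is the observation that deleting a right-descent root from $N(w)$ changes $\Sigma(w)$ exactly when that root is small, and that the region $\mathcal R'$ obtained this way is a genuine neighbour of $\mathcal R$ across $H_\theta$ — which is precisely the content packaged by the equivalence (1)$\Leftrightarrow$(2) of Proposition~\ref{cor:Key1}.
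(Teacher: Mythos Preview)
Your proof is correct and follows essentially the same route as the paper's: both argue via Shi's criterion Eq.~(\ref{eq:star}), the relation $N(ws)=N(w)\setminus\{-w(\alpha_s)\}$, the identification $\Sigma(\mathcal R)=\Sigma(w)$ from Proposition~\ref{prop:ShiSmall}, and then the equivalence (1)$\Leftrightarrow$(2) of Proposition~\ref{cor:Key1} to pass from ``small'' to ``descent-root''. Your presentation is slightly more explicit in first isolating the intermediate equivalence $w\in L_\shi\iff ND_R(w)\subseteq\Sigma$, whereas the paper absorbs this step into the forward direction; but the argument is the same.
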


 \begin{proof} Assume first that $w \in L_{\text{Shi}}$. Let $\alpha \in ND_R(w)$. We  show that $\alpha \in \Sigma D(\mathcal{R})$.  Since $\alpha \in ND_R(w)$ there is by definition $s \in D_R(w)$ such that $\alpha = -w(\alpha_s)$. Moreover we have $N(ws) = N(w) \setminus \{\alpha\}$ (see \S\ref{ss:RightLeft}). 
  Let $\mathcal{R}'$ be the Shi region containing $ws$. By Proposition~\ref{prop:ShiSmall}~(2), we know that $\Sigma(\mathcal{R}) = \Sigma(w)$ and $\Sigma(\mathcal{R}') = \Sigma(ws)$. Then it follows that:
\begin{align*}
\Sigma(\mathcal{R})\setminus \{\alpha\} & = \Sigma(w) \setminus \{\alpha\}  = (N(w) \cap \Sigma) \setminus \{\alpha\}  = (N(w) \setminus \{\alpha\}) \cap \Sigma \\ &= N(ws) \cap \Sigma  = \Sigma(ws)  = \Sigma(\mathcal{R}').
\end{align*}

Since $w$ is minimal in $\mathcal{R}$ and since $s \in D_R(w)$, it follows by Eq.~(\ref{eq:star}) that $\mathcal{R'} \neq \mathcal{R}$. So $\alpha\in \Sigma$ and, therefore, $\alpha$ is a descent-root of $\mathcal{R}$ by Proposition~\ref{cor:Key1}.
Thus, we have $ND_R(w) \subseteq \Sigma D(\mathcal{R})$. 
The other direction of the statement follows immediately from Eq.~(\ref{eq:star}) and the definitions of descent-roots.
\end{proof}

\subsection{On the descent-roots $\delta-\alpha_s$ for $s\in S_0$} We end this section with a result (Lemma \ref{lem:Key1}), which we need in order to prove Theorem~\ref{thm:main}. We start by discussing the case of rank $2$ affine Weyl groups.

\begin{ex}[On some descent-roots in rank $2$]\label{ex:Rank2} Let $\Phi_0$ be an irreducible  finite crystallographic root system of rank $2$, that is, $\Phi_0$ is of type $A_2$, $B_2$ or $G_2$.  Set $\Delta_0=\{\alpha_1,\alpha_2\}$.  We are particularly interested in the descent-roots corresponding to  $\delta - \alpha_i$, for $i=1,2$. Let $\mathcal R$ be a Shi region of $W$. We know by  Proposition~\ref{prop:Shi} that $\delta-\alpha_i \in \Sigma(\mathcal R)$ if and only if $X(\mathcal R,\alpha_i)=+$. By Proposition~\ref{cor:Key1}, we know that $\delta-\alpha_i \in \Sigma D(\mathcal R)$ if and only if $X(\mathcal R,\alpha_i)=+$ and  the sign type $X^{\alpha_i}$, obtained from $X(\mathcal R)$ by replacing $X(\mathcal R,\alpha_i)=+$ by $0$, is admissible.

\noindent {\bf (Type $A_2$)}: for the Shi region $\mathcal R$ in Figure~\ref{fig:SA2}, $\delta-\alpha_2\notin \Sigma D(\mathcal R)$ since the sign type $X^{\alpha_2}=(X^{\alpha_2}_{\alpha_1},X^{\alpha_2}_{\alpha_1+\alpha_2},X^{\alpha_2}_{\alpha_2})=(-,+,0)$ obtained from $X(\mathcal R)$ by replacing $X(\mathcal R,\alpha_2)=+$ by $0$, is not admissible; recall that all admissible sign types for type $A_2$ are given in Figure~\ref{fig:SA2}.

 More generally, let $\mathcal R$ be  any Shi region in Figure~\ref{fig:SA2} with  $X(\mathcal R,\alpha_2)=+$, i.e.,  $\delta-\alpha_2\in \Sigma(\mathcal R)$ by Proposition~\ref{prop:Shi}. It is easy to check - case by case - that
 $$
 \delta-\alpha_2 \in \Sigma D(\mathcal R) \iff \big(X(\mathcal R,\alpha_1+\alpha_2) = + \implies X(\mathcal R,\alpha_1) \in \{0, +\}\big).
 $$
 $$
 \delta-\alpha_1 \in \Sigma D(\mathcal R) \iff \big(X(\mathcal R,\alpha_1+\alpha_2) = + \implies X(\mathcal R,\alpha_2) \in \{0, +\}\big).
 $$

\noindent {\bf (Type $B_2$)}: for the Shi region $\mathcal R_1$ in Figure~\ref{fig:SB2}, $\delta-\alpha_2\notin \Sigma D(\mathcal R_1)$ since the sign type $X^{\alpha_2}=(X^{\alpha_2}_{\alpha_1},X^{\alpha_2}_{2\alpha_1+\alpha_2},X^{\alpha_2}_{\alpha_1+\alpha_2},X^{\alpha_2}_{\alpha_2})=(0,+,+,-)$ obtained from $X(\mathcal R_1)$ by replacing $X(\mathcal R_1,\alpha_1)=+$ by $0$, is not admissible; recall that all admissible sign types for type $B_2$ are given in Figure~\ref{fig:SB2}. Observe here for example that $X(\mathcal R_1,\alpha_1+\alpha_2) = +$ but $X(\mathcal R_1,\alpha_2) \notin \{0, +\}$.

Now, for the Shi region $\mathcal R_2$ in Figure~\ref{fig:SB2}, $\delta-\alpha_2\notin \Sigma D(\mathcal R_2)$ since the sign type $$X^{\alpha_2}=(X^{\alpha_2}_{\alpha_1},X^{\alpha_2}_{2\alpha_1+\alpha_2},X^{\alpha_2}_{\alpha_1+\alpha_2},X^{\alpha_2}_{\alpha_2})=(0,+,-,-)$$ obtained from $X(\mathcal R_2)$ by replacing $X(\mathcal R_2,\alpha_1)=+$ by $0$ is not admissible. Observe here that $X(\mathcal R_2,\alpha_2+\alpha_1) = +$ but $X(\mathcal R_2,\alpha_2) \notin \{0, +\}$.

More generally, let $\mathcal R$ be  any Shi region in Figure~\ref{fig:SB2} with  $X(\mathcal R,\alpha_2)=+$, i.e.,  $\delta-\alpha_2\in \Sigma(\mathcal R)$. It is easy to check case by case that   $\delta-\alpha_1 \in \Sigma D(\mathcal R)$ if and only if for any $\beta\in \Phi_0^+$ such that $\alpha_1+\beta \in \Phi_0^+$ and  $X(\mathcal R,\beta) = +$ we must have $X(\mathcal R,\beta) \in \{0, +\}$. The same remains valid for $\delta-\alpha_2$. We have therefore in this case the following condition:  $\delta-\alpha_i \in \Sigma D(\mathcal R)$ if and only if:
$$
 \big(\beta \in\Phi_0^+ \textrm{ with }\alpha_i+\beta\in \Phi_0^+ \textrm{ and }X(\mathcal R,\alpha_i+\beta) = + \implies X(\mathcal R,\beta) \in \{0, +\}.
$$

\noindent {\bf (Type $G_2$)}: The above condition remains true in this type for any Shi region $\mathcal R$; we leave the reader check  this assertion on  Figure~\ref{fig:SG2}. For instance, for the region $\mathcal R$ indicated in Figure~\ref{fig:SG2}, $\delta-\alpha_1\notin\Sigma D(\mathcal R)$. Indeed, the sign type $X^{\alpha_1}=(X^{\alpha_1}_{\alpha_1},X^{\alpha_1}_{3\alpha_1+\alpha_2},X^{\alpha_1}_{2\alpha_1+\alpha_2},X^{\alpha_1}_{\alpha_1+\alpha_2},X^{\alpha_2}_{\alpha_2})=(0,+,+,+,-,-)$ obtained from $X(\mathcal R)$ by replacing $X(\mathcal R,\alpha_1)=+$ by $0$, is not admissible. Observe here that $X(\mathcal R,2\alpha_1+\alpha_2) = +$  and $2\alpha_1+\alpha_2 = \alpha_1+(\alpha_1 +\alpha_2)\in\Phi_0^+$ but $X(\mathcal R_1,\alpha_1+\alpha_2) =-\notin \{0, +\}$.
\end{ex}

 \begin{lem}\label{lem:Key1} Let $\mathcal R$ be a  Shi region and $s\in S_0$ such that $\delta-\alpha_s\in \Sigma(\mathcal R)$, i.e., $X(\mathcal R,\alpha_s)=+$. Then $H_{\delta-\alpha_s}$ is a descent-wall of $\mathcal R$ if and only if for any irreducible root  subsystem $\Psi$ of rank $2$ in $\Phi_0$ such that $\alpha_s\in \Psi$ the following condition is verified by $\mathcal R$:
 \begin{itemize}
 \item[($\star$)] If $\beta \in\Psi$ with $\alpha_s+\beta\in \Psi$ and $X(\mathcal R,\alpha_s+\beta) = +$, then $X(\mathcal R,\beta) \in \{0, +\}$.
\end{itemize}
 \end{lem}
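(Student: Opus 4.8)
The plan is to reduce the statement to a purely rank-$2$ verification via Proposition~\ref{cor:Key1} and Remark~\ref{rem:Descent}, and then to identify condition $(\star)$ with the admissibility criterion coming from Theorem~\ref{thm:Shi}~(2). By Proposition~\ref{cor:Key1}, since $X(\mathcal R,\alpha_s)=+$, the hyperplane $H_{\delta-\alpha_s}$ is a descent-wall of $\mathcal R$ if and only if the sign type $X^{\alpha_s}_{\mathcal R}$ — obtained from $X(\mathcal R)$ by flipping the $\alpha_s$-coordinate from $+$ to $0$ — is admissible. By Remark~\ref{rem:Descent}, since $X(\mathcal R)$ itself is admissible and $X^{\alpha_s}_{\mathcal R}$ differs from it only in the $\alpha_s$-coordinate, admissibility of $X^{\alpha_s}_{\mathcal R}$ is equivalent to admissibility of the restriction $(X^{\alpha_s}_\beta)_{\beta\in\Psi^+}$ for every irreducible rank-$2$ root subsystem $\Psi\subseteq\Phi_0$ with $\alpha_s\in\Psi$. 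So the whole lemma reduces to the following rank-$2$ claim: for such a $\Psi$, the restricted sign type (with a $0$ forced in the $\alpha_s$-slot and the other slots as in $X(\mathcal R)$) is admissible if and only if condition $(\star)$ holds for $\Psi$.

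First I would settle which roots $\alpha_s$ can be: since $\alpha_s\in\Delta_0$ is simple, it is never the highest root of any rank-$2$ subsystem $\Psi$ it belongs to, so in the triangular/pyramidal picture for $\Psi$ the slot $\alpha_s$ sits at a ``bottom'' position rather than at the apex. This is the structural fact that makes $(\star)$ the right condition: the roots $\beta$ with $\alpha_s+\beta\in\Psi$ are precisely the ones ``above'' $\alpha_s$ in $\Psi$, and $(\star)$ constrains exactly those. Then I would go through the three possibilities for $\Psi$ — type $A_2$, $B_2$, $G_2$ — using the complete lists of admissible rank-$2$ sign types in Figures~\ref{fig:SA2}, \ref{fig:SB2}, \ref{fig:SG2}. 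For each, I fix $\alpha_s$ to be one of the simple roots, enumerate the admissible sign types $X$ with $X_{\alpha_s}=+$, flip that entry to $0$, and check against the admissible list; comparing the pattern of survivors with $(\star)$ gives the equivalence. Much of this bookkeeping has already been carried out in Example~\ref{ex:Rank2}, which I would cite: there the condition ``$\delta-\alpha_i\in\Sigma D(\mathcal R)$ iff ($\alpha_i+\beta\in\Phi_0^+$ and $X(\mathcal R,\alpha_i+\beta)=+$ $\implies$ $X(\mathcal R,\beta)\in\{0,+\}$)'' is verified case by case in each of types $A_2$, $B_2$, $G_2$, which is exactly $(\star)$ applied to the unique rank-$2$ subsystem.

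The main obstacle — really the only nontrivial point — is the case-checking against the admissible sign type tables, and in particular making sure the logical direction of $(\star)$ (an implication, not an equivalence) matches precisely what the tables say: one must confirm both that every admissible $X$ with $X_{\alpha_s}=+$ satisfying $(\star)$ yields an admissible $X^{\alpha_s}$, and, conversely, that a violation of $(\star)$ always produces a forbidden rank-$2$ pattern (the culprit triple being $X_{\alpha_s}=0$, $X_\beta=-$, $X_{\alpha_s+\beta}=+$, which one checks is never admissible in any of $A_2,B_2,G_2$). Once that forbidden pattern is isolated, the ``only if'' direction is immediate: if $(\star)$ fails for some $\Psi$ and some $\beta$, then $(X^{\alpha_s}_\gamma)_{\gamma\in\Psi^+}$ contains this triple and hence is not admissible, so $X^{\alpha_s}_{\mathcal R}$ is not admissible and $H_{\delta-\alpha_s}$ is not a descent-wall. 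For the ``if'' direction, assuming $(\star)$ for all relevant $\Psi$, I check in the tables that flipping the $\alpha_s$-entry of an admissible $X$ to $0$ keeps every rank-$2$ restriction admissible, and conclude via Remark~\ref{rem:Descent} and Proposition~\ref{cor:Key1}.
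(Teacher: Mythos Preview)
Your proposal is correct and follows essentially the same route as the paper's own proof: reduce via Proposition~\ref{cor:Key1} to admissibility of $X^{\alpha_s}_{\mathcal R}$, then via Theorem~\ref{thm:Shi} (equivalently Remark~\ref{rem:Descent}) to the rank-$2$ subsystems containing $\alpha_s$, and finally invoke the case-by-case verification of Example~\ref{ex:Rank2}. The paper's proof is slightly terser but uses exactly these ingredients; your additional remark isolating the forbidden triple $(X_{\alpha_s},X_\beta,X_{\alpha_s+\beta})=(0,-,+)$ is a helpful gloss on why the ``only if'' direction works, though strictly speaking it is subsumed in the rank-$2$ checks of Example~\ref{ex:Rank2}.
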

 \begin{proof}  First, the case of $\Phi_0$ of rank $2$ is done in Example~\ref{ex:Rank2}.  By Proposition~\ref{cor:Key1}, we know that $H_{\delta-\alpha_s}$ is a descent-wall of $\mathcal R$ if and only if $X^{\alpha_s}$ is admissible. 
 By Theorem~\ref{thm:Shi}, we know that $X^{\alpha_s}$ is admissible if and only if  for any  irreducible root  subsystem $\Psi$  of rank $2$ in $\Phi_0$, the restriction $(X_\gamma^{\alpha_s} )_{\gamma \in \Psi^+}$ of $X^{\alpha_s}$ to $\Psi^+$ is admissible. Since $X(\mathcal R)$ is admissible and that the only difference between $X(\mathcal R)$ and $X^{\alpha_s}$ is that $X(\mathcal R,\alpha_s)\not = 0$ and $X^{\alpha_s}_{\alpha_s}=0$, one has to verify the admissibility of $(X_\gamma^{\alpha_s} )_{\gamma \in \Psi^+}$ only for  irreducible root  subsystems $\Psi$  of rank $2$ in $\Phi_0$ such that $\alpha_s\in \Psi$; it is well-known that $\alpha_s$ is then also a simple root for $\Psi$.
 
 But we know that the lemma is true for $\Psi$ of rank $2$, so $X^{\alpha_s}$ is admissible  if and only if 
 $(X(\mathcal R,\gamma))_{\gamma \in \Psi^+}$ satisfies Condition $(\star)$ for  any  irreducible root  subsystem $\Psi$  rank $2$ in $\Phi_0$ such that $\alpha_s\in \Psi$. 
 \end{proof}

 \section{Dominant Shi regions: the initial case for the proof of Theorem~\ref{thm:main}}\label{se:CelliniPapi} Let $(W,S)$ be an irreducible affine Coxeter system with underlying Weyl group $W_0$.  In \S\ref{ss:ShiSmall}, we explain that it is enough to show that $L_\shi\subseteq L$ in order to prove Theorem~\ref{thm:main}. The proof is by induction with initial case the case of Shi regions contained in the dominant region. In this section, we explain that any dominant Shi region contains a low element, which follows mainly by the works of Cellini and Papi~\cite{CePa00,CePa02,CePa04} on  ad-nilpotent ideals and Shi's article on $\oplus$ sign-types~\cite{Shi97}.

 \subsection{Dominant Shi region}
 Recall from Section \ref{ss:CRS} that the dominant region of  the affine Coxeter arrangement $\mathcal A(W,S)$ is denoted by $C_\circ$.  A Shi region $\mathcal R$ is called {\em dominant} if $\mathcal R\subseteq C_\circ$. An obvious remark is the fact that a Shi region $\mathcal R$ is dominant if and only if $X(\mathcal R,\alpha)\in \{+,0\}$ for all $\alpha\in\Phi_0^+$.
 
 The Weyl group $W_0$ is a standard parabolic subgroup of $W$, since $W_0$ is generated by $S_0\subseteq S$. The set of minimal length coset representatives  is: 
 $$
 {}^0W  := \{v\in W\mid \ell(sv)>\ell(v),  \ \forall s\in S_0\}.
 $$ 
 The following well-known proposition states that an element is a minimal coset representative for $W_0\backslash W$ if and only if its corresponding alcove is in the dominant region. 

 \begin{prop}\label{prop:DomCR} Let $w\in W$. We have $w \in {}^0W$ if and only if $w\cdot A_\circ\subseteq C_\circ$.
 \end{prop}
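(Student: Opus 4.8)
The plan is to prove the equivalence $w \in {}^0W \iff w\cdot A_\circ \subseteq C_\circ$ by characterizing both sides in terms of inversion sets, using the descent-set dictionary established in \S\ref{ss:RightLeft}. Recall that $C_\circ = \{x \in V_0 \mid \langle x,\alpha\rangle > 0,\ \forall \alpha \in \Delta_0\}$ and that $C_\circ$ is bounded by the walls $H_\alpha = H_{\alpha + 0\delta}$ for $\alpha \in \Delta_0$; moreover, by the description of $\Phi^+$ in \S\ref{ss:Aff1}, an affine hyperplane $H_{\alpha,k}$ with $k \neq 0$ cannot separate $A_\circ$ from $A_\circ$ inside $C_\circ$ in a way that matters here — the key geometric fact is that $w\cdot A_\circ \subseteq C_\circ$ if and only if none of the finite walls $H_{\alpha_s}$ for $s \in S_0$ separates $w\cdot A_\circ$ from $A_\circ$.

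First I would make precise the geometric direction. If $w\cdot A_\circ \subseteq C_\circ$, then since $A_\circ \subseteq C_\circ$ as well and $C_\circ$ is a convex region of the Coxeter arrangement $\mathcal{A}(W,S)$ whose walls include the hyperplanes $H_{\alpha}$ for $\alpha \in \Delta_0$, no such $H_{\alpha}$ can lie strictly between $A_\circ$ and $w\cdot A_\circ$. By the dictionary $ND_L(w) = \Delta \cap N(w)$ (see \S\ref{ss:RightLeft}), together with the fact that $\alpha + k\delta \in ND_L(w)$ exactly when $H_{\alpha+k\delta}$ is a wall of $A_\circ$ separating $A_\circ$ from $w\cdot A_\circ$, this forces $\alpha_s \notin N(w)$ for all $s \in S_0$, i.e., $s \notin D_L(w)$ for all $s \in S_0$, which is precisely the condition $\ell(sw) > \ell(w)$ for all $s \in S_0$, i.e., $w \in {}^0W$. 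The converse runs the implication backwards: if $w \in {}^0W$, then $\alpha_s \notin N(w)$ for every $s \in S_0$, so no finite simple wall $H_{\alpha_s}$ separates $A_\circ$ from $w\cdot A_\circ$; since $A_\circ \subseteq C_\circ$ and $C_\circ$ is the union of $A_\circ$ with all alcoves reachable from $A_\circ$ without crossing any $H_{\alpha_s}$ ($s\in S_0$), it follows that $w\cdot A_\circ \subseteq C_\circ$.

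The main obstacle I anticipate is the careful justification of the claim that $C_\circ$ is exactly the set of points reachable from (the interior of) $A_\circ$ without crossing a finite simple wall $H_{\alpha_s}$, $s\in S_0$ — this is really the statement that an alcove lies in the dominant chamber iff every gallery from $A_\circ$ to it crosses only hyperplanes $H_{\alpha,k}$ with the sign of $\langle \cdot, \alpha\rangle$ staying nonnegative, which one can phrase via the minimal-gallery/inversion-set correspondence. A clean way around this is to invoke the standard parabolic coset theory directly: $W_0$ acts on the set of alcoves, the $W_0$-orbit of $A_\circ$ fills out $\overline{C_\circ}$'s chamber structure, and ${}^0W$ is by definition the set of minimal representatives; one then cites that $w\cdot A_\circ \subseteq C_\circ$ iff $w$ is the minimal element in its coset $W_0 w$, which is a reformulation of $w\in{}^0W$. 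Alternatively, one can cite \cite[\S1.15]{Hu90} or the analogous statement in \cite[\S3]{DyLe11} for the precise correspondence between minimal coset representatives modulo a standard parabolic and alcoves (or chambers) in the corresponding fundamental domain; given the phrase "well-known proposition," a short proof along the inversion-set lines above, combined with such a citation, should suffice.
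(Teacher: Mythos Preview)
The paper does not prove this proposition; it is stated as ``well-known'' and left without proof. Your argument is a correct standard proof of the fact, so there is nothing to compare against.

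One comment: the ``main obstacle'' you flag is not actually an obstacle. The dominant chamber is \emph{defined} as $C_\circ = \bigcap_{s\in S_0}\{x\in V_0\mid \langle x,\alpha_s\rangle>0\}$, so an alcove lies in $C_\circ$ if and only if it lies on the positive side of every $H_{\alpha_s}$, $s\in S_0$; since $A_\circ\subseteq C_\circ$, this is literally the statement that no $H_{\alpha_s}$ separates $w\cdot A_\circ$ from $A_\circ$. There is no need to phrase this via galleries or to invoke parabolic coset theory as a separate input: the equivalence with $\alpha_s\notin N(w)$ for all $s\in S_0$ (hence with $w\in{}^0W$) is immediate from the geometric meaning of $N(w)$ as the set of positive roots whose hyperplanes separate $w\cdot A_\circ$ from $A_\circ$. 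Your first two paragraphs already contain the complete proof; the third can be dropped.
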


The inductive step of the proof of Theorem~\ref{thm:main}, which is dealt with in \S\ref{se:Proof}, is based on the decomposition in minimal minimal length coset representative. Let $w\in L_\shi$, then there is $u\in W_0$ and $v\in   {}^0W$ such that $w=uv$ and $\ell(w)=\ell(u)+\ell(v)$. Since $L_\shi$ is closed under taking suffixes (Proposition \ref{prop:ShiS}), then $v\in L_\shi$.  Therefore, if $\ell(u)=0$, i.e., $u=e$, then $w=v\cdot A_\circ\subseteq C_\circ$, which is the initial case of our proof by induction.

  \subsection{Minimal elements in dominant Shi regions} 
 Recall that $\Psi\subseteq \Phi_0^+$ is an ideal  of the root poset  $(\Phi_0^+,\preceq)$ if for all $\alpha\in\Psi$ and $\gamma \in \Phi_0^+$ such that $\alpha\preceq \gamma$ we have $\gamma\in \Psi$. By definition of the root poset, this is equivalent to the condition: $\Psi\subseteq \Phi_0^+$ is an ideal   of $(\Phi_0^+,\preceq)$ if for all $\alpha\in\Psi$ and $\beta \in \Phi_0^+$ such that $\alpha+\beta\in \Phi_0^+$, then $\alpha+\beta\in \Psi$.   
 
 Denote by $\mathcal I(\Phi_0)$ the set of  ideals   of $(\Phi_0^+,\preceq)$ and by $L^0_\shi$ the set of minimal elements in dominant Shi regions.  In \cite[Theorem~1.4]{Shi97}, Shi gave a bijection between dominant Shi regions and $\mathcal I(\Phi_0)$. Hence the number of Shi regions is equal to the number of ideals of the roots poset. We recall here this bijection following the lines of~\cite[\S4]{CePa02}. 
 
It is a general fact that   ideals and antichains  in $(\Phi_0^+,\preceq)$ are in natural bijection, which is given by mapping an ideal $\Psi$ to the set of its minimal elements $\Psi_{\min}$. Let $\Psi\in\mathcal I(\Phi_0)$, then the corresponding Shi region is:
\begin{eqnarray*}
 \mathcal R_\Psi&=&\{x\in C_\circ\mid \langle\beta,x \rangle>1 \textrm{ if } \alpha\preceq \beta \textrm{ for some }\alpha\in \Psi_{\min};\   \langle\beta,x \rangle<1 \textrm{ otherwise }\} .\nonumber
 %&=&C_\circ\cap \bigcap_{\substack{\alpha\in \Psi_{\min}\\ \alpha\preceq \beta}} H_{\delta-\beta}^+  \cap \bigcap_{\alpha\notin \Psi}H_{\delta-\alpha}^-\ 
%= \ C_\circ\cap\bigcap_{\beta\in \Psi} H_{\delta-\beta}^+  \cap \bigcap_{\alpha\notin \Psi}H_{\delta-\alpha}^-. \qquad(\triangle)
\end{eqnarray*}
In terms of admissible sign types, the bijection  is: 
$$
\Psi\in \mathcal I(\Phi_0) \longmapsto X(\mathcal R_\Psi,\beta) = \left\{
\begin{array}{cc}
+&\textrm{if } \beta\in \Psi\\
0&\textrm{otherwise}
\end{array}
\right. .
$$
Therefore, by Proposition~\ref{prop:Shi}, we have
$\Sigma(\mathcal R_\Psi)=\{ \delta-\beta \mid \beta \in \Psi\}.$ Since $\Psi_{\min}\setminus\{\alpha\}$ is again an antichain for any $\alpha\in\Psi_{\min}$, it corresponds to an ideal $\Psi'$ with $\Psi'_{\min} = \Psi_{\min}\setminus\{\alpha\}$. We deduce (Proposition \ref{prop:WallDom}) that the descent-walls of a dominant Shi region correspond therefore to the roots in  its corresponding antichain.

\begin{prop}\label{prop:WallDom} Let $\Psi \in \mathcal I(\Phi_0)$, then $\Sigma(\mathcal R_\Psi)=\{ \delta-\beta \mid \beta \in \Psi\}$ and 
$$
\Sigma D(\mathcal R_\Psi)= \{\delta-\alpha\mid \alpha\in \Psi_{\inf}\}.
$$ 
 \end{prop}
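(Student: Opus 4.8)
The plan is to prove the two assertions in turn; both are short. For the first, $\Sigma(\mathcal R_\Psi)=\{\delta-\beta\mid \beta\in\Psi\}$, I would feed the explicit sign type $X(\mathcal R_\Psi,\beta)$ — which equals $+$ when $\beta\in\Psi$ and $0$ otherwise — into Proposition~\ref{prop:Shi}: the first summand $\{\alpha\in\Phi_0^+\mid X(\mathcal R_\Psi,\alpha)=-\}$ there is empty since no coordinate of $X(\mathcal R_\Psi)$ equals $-$, and the second summand is exactly $\{\delta-\beta\mid\beta\in\Psi\}$. (This is in fact already recorded in the paragraph preceding the statement.) Throughout I write $\Psi_{\min}$ for the antichain of minimal elements of $\Psi$, which is what the statement denotes $\Psi_{\inf}$.

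For the second assertion I would start from the explicit description of $\Sigma D(\mathcal R)$ given in Proposition~\ref{cor:Key1}. Since every coordinate of $X(\mathcal R_\Psi)$ lies in $\{0,+\}$, the first summand there (indexed by $\alpha\in\Phi_0^+$ with $X(\mathcal R_\Psi,\alpha)=-$) is empty, so every descent-root of $\mathcal R_\Psi$ has the form $\delta-\alpha$ with $\alpha\in\Psi$ and with the sign type $X^\alpha_{\mathcal R_\Psi}$ admissible. The key observation is that $X^\alpha_{\mathcal R_\Psi}$ is itself a $\{0,+\}$-valued sign type whose set of $+$-coordinates is precisely $\Psi\setminus\{\alpha\}$; hence, by Shi's bijection between $\mathcal I(\Phi_0)$ and the dominant admissible sign types recalled just before the statement, $X^\alpha_{\mathcal R_\Psi}$ is admissible if and only if $\Psi\setminus\{\alpha\}$ is an ideal of the root poset $(\Phi_0^+,\preceq)$.

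It then remains to check the elementary order-theoretic fact that, for $\Psi\in\mathcal I(\Phi_0)$ and $\alpha\in\Psi$, the set $\Psi\setminus\{\alpha\}$ is an ideal if and only if $\alpha\in\Psi_{\min}$: if $\alpha$ is minimal in $\Psi$, then no element of $\Psi\setminus\{\alpha\}$ lies below $\alpha$, so deleting $\alpha$ preserves upward closure; conversely, if some $\gamma\in\Psi$ satisfies $\gamma\prec\alpha$, then $\gamma\in\Psi\setminus\{\alpha\}$ together with $\alpha\notin\Psi\setminus\{\alpha\}$ witnesses the failure of upward closure. Combining this with the previous paragraph gives $\delta-\alpha\in\Sigma D(\mathcal R_\Psi)\iff\alpha\in\Psi_{\min}$, which is the claim. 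I do not anticipate a genuine obstacle; the one point demanding care is to keep the ``ideal'' convention in force — upward closed for $\preceq$, equivalently stable under adding positive roots — so that the right operation here is ``remove a minimal element of $\Psi$'' and not ``remove a maximal one''.
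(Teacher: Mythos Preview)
Your proof is correct and follows essentially the same route as the paper: the discussion preceding the proposition already records $\Sigma(\mathcal R_\Psi)=\{\delta-\beta\mid\beta\in\Psi\}$ via Proposition~\ref{prop:Shi}, and then argues that removing a minimal element of $\Psi$ yields another ideal, hence another dominant Shi region, which by Proposition~\ref{cor:Key1} makes $H_{\delta-\alpha}$ a descent-wall. Your version is in fact slightly more complete, since you use the admissibility criterion (Proposition~\ref{cor:Key1}(3)) together with Shi's bijection to handle both inclusions at once, whereas the paper's phrasing via antichains explicitly treats only the direction $\alpha\in\Psi_{\min}\Rightarrow\delta-\alpha\in\Sigma D(\mathcal R_\Psi)$.
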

 
We now describe the element $w_\Psi\in L^0_\shi$ with Shi region $\mathcal R_\Psi$.

 \begin{thm}[Cellini-Papi] \label{thm:CePa} Let $\Psi\subseteq \Phi_0^+$ be an ideal  of the root poset  $(\Phi_0^+,\preceq)$.
 There is a unique element $w_\Psi \in W$ such that 
$
N(w_\Psi) = \cone_\Phi(\{\delta-\alpha\mid\alpha\in \Psi \}).
$
Moreover  $ND_R(w_\Psi)= \{\delta-\alpha\mid \alpha\in \Psi_{\inf}\}$. 
\end{thm}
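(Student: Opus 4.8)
The aim is to produce the element $w_\Psi$ attached to an ideal $\Psi$, and the natural object to build is the set $\cone_\Phi(\{\delta-\alpha\mid \alpha\in \Psi\})$: I would first argue that this set is the inversion set of an element of $W$, and then compute its set of right-descent roots. The key point is the classical characterization that a subset $N\subseteq \Phi^+$ is of the form $N(w)$ for some $w\in W$ if and only if $N$ is \emph{biconvex}, i.e.\ both $N$ and its complement $\Phi^+\setminus N$ are closed under the operation ``if $\gamma_1,\gamma_2$ are roots whose positive span meets $\Phi^+$, that intersection lies in the set''. So the first real step is: set $N:=\cone_\Phi(\{\delta-\alpha\mid\alpha\in \Psi\})$ and check biconvexity. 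That $N$ is closed is immediate from $N=\cone(X)\cap\Phi$ for $X=\{\delta-\alpha\mid\alpha\in\Psi\}$. For the complement, one uses the description of $\Phi^+$ from \S\ref{ss:Aff1}, namely $\Phi^+=(\Phi_0^++\mathbb N\delta)\sqcup(\Phi_0^-+\mathbb N^*\delta)$, together with the hypothesis that $\Psi$ is an \emph{ideal} of the root poset: if $\alpha+\beta\in\Phi_0^+$ and $\alpha\in\Psi$ then $\alpha+\beta\in\Psi$. A short case analysis on which of $\delta-\alpha$, $\alpha+k\delta$, $-\alpha+k\delta$ types of positive roots can combine to land inside versus outside $N$ then yields biconvexity of the complement. (This is essentially Cellini–Papi's argument and can be cited, but I would at least indicate it.)

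Once $w_\Psi$ exists and is unique (uniqueness because $w\mapsto N(w)$ is injective, already used repeatedly in the paper), the second step is to identify $ND_R(w_\Psi)$. Recall from \S\ref{ss:RightLeft} that $\alpha+k\delta\in ND_R(w)$ iff there is $u\in W$ with $N(u)=N(w)\setminus\{\alpha+k\delta\}$, equivalently iff $\alpha+k\delta$ spans an extreme ray of $\cone(N(w))$ \emph{and} removing it leaves a biconvex set. Since $N(w_\Psi)=\cone_\Phi(X)$ with $X=\{\delta-\alpha\mid\alpha\in\Psi\}$, the extreme rays of $\cone(N(w_\Psi))$ are exactly the $\delta-\alpha$ with $\alpha$ minimal in $\Psi$, i.e.\ $\alpha\in\Psi_{\inf}$: a root $\delta-\alpha$ with $\alpha$ non-minimal is a positive combination of $\delta-\alpha'$ for $\alpha'\preceq\alpha$ in $\Psi$ (using $\alpha=\alpha'+(\text{sum of positive roots})$ and that $\delta$ appears with the same coefficient $1$). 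So $N^1(w_\Psi)=\{\delta-\alpha\mid\alpha\in\Psi_{\inf}\}$. Then I must check that removing any one such ray $\delta-\alpha_0$, $\alpha_0\in\Psi_{\inf}$, gives a biconvex set — but $\Psi_{\inf}\setminus\{\alpha_0\}$ is again an antichain, hence generates an ideal $\Psi'$, and $\cone_\Phi(\{\delta-\beta\mid\beta\in\Psi'_{\min}\})=\cone_\Phi(\{\delta-\beta\mid\beta\in\Psi'\})=N(w_{\Psi'})$, which we already know is biconvex. Hence every minimal-ray generator is actually a right descent root, giving $ND_R(w_\Psi)=\{\delta-\alpha\mid\alpha\in\Psi_{\inf}\}$.

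\textbf{Main obstacle.} The delicate part is the biconvexity check for the complement $\Phi^+\setminus N$, where the specifically crystallographic, affine shape of $\Phi^+$ and the ideal condition on $\Psi$ both enter; in particular one must handle combinations involving a negative finite part (roots of the form $-\gamma+m\delta$) and see that they cannot force a root of $N$-type into the complement unless $\Psi$ fails to be an ideal. A clean way to organize it is to project everything to $V_0$ and track the $\delta$-coefficient separately, reducing each closure instance to a statement purely about $\Phi_0^+$ and $\Psi$; alternatively, one simply invokes \cite[\S\S2--3]{CePa00} or \cite{CePa02} where this computation is carried out, since the theorem is explicitly attributed to Cellini and Papi. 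I would present the biconvexity verification in a couple of lines with a forward reference and spend the written detail instead on the extreme-ray computation and the ``remove one minimal root $\leftrightarrow$ smaller ideal'' bijection, since that is what is reused downstream to get $\Sigma D(\mathcal R_\Psi)$ in Proposition~\ref{prop:WallDom}.
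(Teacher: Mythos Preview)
Your plan for the existence part is sound and is indeed the Cellini--Papi strategy: show that $\mathbb L_\Psi:=\cone_\Phi(\{\delta-\alpha\mid\alpha\in\Psi\})$ is biconvex, using the ideal property for the coclosedness of the complement. The paper itself does not reprove this; it simply identifies $\mathbb L_\Psi$ with Cellini--Papi's set $\bigcup_{k\geq 1}(k\delta-\Psi^k)$ and cites \cite[Theorem~2.6]{CePa00}.

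The computation of $ND_R(w_\Psi)$, however, has two genuine gaps.

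\smallskip
\emph{(1) The extreme rays of $\cone(\{\delta-\alpha\mid\alpha\in\Psi\})$ are \textbf{not} in general indexed by $\Psi_{\min}$.} Your argument that a non-minimal $\alpha$ gives a non-extreme $\delta-\alpha$ would require writing $\delta-\alpha$ as a \emph{convex} combination of other $\delta-\alpha'$ (since all generators have $\delta$-coefficient~$1$), i.e.\ writing $\alpha$ as a convex combination of other elements of $\Psi$. But ``$\alpha$ non-minimal'' only says $\alpha=\alpha'+\beta$ with $\beta$ a nonzero sum of simple roots, which is not the same thing. Concretely, in type $B_2$ with $\Delta_0=\{\alpha_1,\alpha_2\}$ ($\alpha_1$ short) and $\Psi=\{\alpha_1,\alpha_1+\alpha_2,2\alpha_1+\alpha_2\}$, one has $\Psi_{\min}=\{\alpha_1\}$, yet the three vectors $\delta-\alpha_1$, $\delta-(\alpha_1+\alpha_2)$, $\delta-(2\alpha_1+\alpha_2)$ are linearly independent, so all three are extreme rays and $N^1(w_\Psi)$ has three elements, not one. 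In particular $N^1(w_\Psi)\supsetneq\{\delta-\alpha\mid\alpha\in\Psi_{\min}\}$, so the inclusion $ND_R(w_\Psi)\subseteq\{\delta-\alpha\mid\alpha\in\Psi_{\min}\}$ cannot be read off from $ND_R\subseteq N^1$.

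\smallskip
\emph{(2) Removing $\delta-\alpha_0$ from $N(w_\Psi)$ does \textbf{not} yield $N(w_{\Psi'})$ for $\Psi'$ the ideal with antichain $\Psi_{\min}\setminus\{\alpha_0\}$.} In the same $B_2$ example, $N(w_\Psi)=\{\delta-\alpha_1,\ \delta-(\alpha_1+\alpha_2),\ \delta-(2\alpha_1+\alpha_2),\ 2\delta-(2\alpha_1+\alpha_2)\}$ has four elements; removing $\delta-\alpha_1$ leaves a set of size three, whereas your $\Psi'$ is the empty ideal and $N(w_{\Psi'})=\varnothing$. So the set $N(w_\Psi)\setminus\{\delta-\alpha_0\}$ \emph{is} biconvex (as it must be, since the theorem is true), but it is not of the form $N(w_{\Psi''})$ for any ideal $\Psi''$, and in fact the corresponding element $u$ with $N(u)=N(w_\Psi)\setminus\{\delta-\alpha_0\}$ is not even low. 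Your justification of the inclusion $ND_R(w_\Psi)\supseteq\{\delta-\alpha\mid\alpha\in\Psi_{\min}\}$ therefore does not go through as written.

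\smallskip
The paper avoids both issues by citing Cellini--Papi directly: \cite[Proposition~2.12(ii)]{CePa00} for $ND_R(w_\Psi)\subseteq\{\delta-\alpha\mid\alpha\in\Psi_{\min}\}$ and \cite[Proposition~3.4]{CePa04} for the reverse inclusion. Their arguments are not the cone/extreme-ray picture you sketch; they work with explicit reduced expressions and the combinatorics of $\mathbb L_\Psi$. If you want to reprove this rather than cite it, you will need a different mechanism for both inclusions --- for instance, for $\subseteq$, show directly that for $\alpha\in\Psi\setminus\Psi_{\min}$ the set $N(w_\Psi)\setminus\{\delta-\alpha\}$ fails coclosedness (write $\delta-\alpha'=(\delta-\alpha)+\beta$ with $\alpha=\alpha'+\beta$, $\alpha'\in\Psi$, $\beta\in\Phi_0^+$); and for $\supseteq$, verify biconvexity of $N(w_\Psi)\setminus\{\delta-\alpha_0\}$ by hand, which again uses the ideal property but is not the ``pass to $\Psi'$'' shortcut.
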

\begin{proof}  As this result is not stated {\em Per Se} in~\cite{CePa00,CePa04}, we give here a brief proof that point out the relevant results in these articles.  In \cite[\S2]{CePa00}, the authors use the following notations: for  $k\in\mathbb N^*$, write  
$$
\Psi^k:=\{\alpha_1+\dots +\alpha_k\in\Phi_0^+\mid \alpha_i\in \Psi\} \textrm{ and } \mathbb L_\Psi:=\bigcup_{k\in \mathbb N^*} (k\delta-\Psi^k).
$$
Observe that $\mathbb L_\Psi= \cone_\Phi(\{\delta-\alpha\mid\alpha\in \Psi \})$. Now, by  \cite[Theorem 2.6]{CePa00}, there is a unique  $w_\Psi \in W$ such that $\mathbb L_\Psi=N(w_\Psi)$, that is, $N(w_\Psi)=\cone_\Phi(\{\delta-\alpha\mid\alpha\in \Psi \})$.  
\noindent The statement $ND_R(w_\Psi)\supseteq \{\delta-\alpha\mid \alpha\in \Psi_{\inf}\}$ is precisely \cite[Proposition 3.4]{CePa04}, while the reverse inclusion is precisely  \cite[Proposition~2.12~(ii)]{CePa00}.
 \end{proof}

 Cellini and Papi showed in~\cite{CePa02}  that $|L_\shi^0|=|\mathcal I(\Phi_0)|=\cat(\Phi_0)$. We obtain therefore the following corollary.  

\begin{cor}\label{cor:LLshiDom} Let $\Psi\in\mathcal I(\Phi_0)$.
\begin{enumerate}[(i)]
\item The element $w_\Psi$ in Theorem~\ref{thm:CePa} is a low element.
\item The Shi region containing $w_\Psi\cdot A_\circ$ is $\mathcal R_\Psi$ and is therefore dominant.
\item We have $ND_R(w_\Psi)=\Sigma D(\mathcal R_\Psi)= \{\delta-\alpha\mid \alpha\in \Psi_{\inf}\}$.
\item We have $|L^0|=|L_\shi^0|=|\mathcal I(\Phi_0)|= \cat(\Phi_0)$.
\item In particular, $L^0=L_\shi^0=\{w_\Psi\mid \Psi \in \mathcal I(\Phi_0)\}$.
\end{enumerate}
\end{cor}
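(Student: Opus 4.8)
The plan is to deduce Corollary~\ref{cor:LLshiDom} from Theorem~\ref{thm:CePa} together with the bijection between $\mathcal I(\Phi_0)$ and dominant Shi regions recalled just before the statement. I would take the five items essentially in order, since each one feeds the next.

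First, for item (i): by Theorem~\ref{thm:CePa}, $N(w_\Psi)=\cone_\Phi(\{\delta-\alpha\mid \alpha\in\Psi\})$. Since every $\delta-\alpha$ with $\alpha\in\Phi_0^+$ is a small root (recall $\Sigma=\Phi_0^+\sqcup(\delta-\Phi_0^+)$), the set $X:=\{\delta-\alpha\mid\alpha\in\Psi\}$ is a subset of $\Sigma$, and hence $N(w_\Psi)=\cone_\Phi(X)$ exhibits $w_\Psi$ as a low element by the very definition of $L(W,S)$. For item (ii): from $N(w_\Psi)=\cone_\Phi(\{\delta-\alpha\mid\alpha\in\Psi\})$ we get $\Sigma(w_\Psi)=N(w_\Psi)\cap\Sigma=\{\delta-\alpha\mid\alpha\in\Psi\}$ (one inclusion is immediate; for the other, a small root in $\cone_\Phi(\{\delta-\alpha\mid\alpha\in\Psi\})$ lies in $\Phi_0^+$ or in $\delta-\Phi_0^+$, and the cone description forces it into the second family with the root in $\Psi$ — this is exactly the kind of computation that the $\mathbb L_\Psi$ description in the proof of Theorem~\ref{thm:CePa} makes transparent). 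By Proposition~\ref{prop:ShiSmall}~(1), the Shi region $\mathcal R$ containing $w_\Psi\cdot A_\circ$ satisfies $\Sigma(\mathcal R)=\Sigma(w_\Psi)=\{\delta-\alpha\mid\alpha\in\Psi\}=\Sigma(\mathcal R_\Psi)$ by Proposition~\ref{prop:WallDom}; since a Shi region is determined by its separation set (Proposition~\ref{prop:ShiSmall}~(1) again, via $u\sim_\shi v\iff\Sigma(u)=\Sigma(v)$), we conclude $\mathcal R=\mathcal R_\Psi$, which is dominant by construction.

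Item (iii) is then a combination of pieces already in hand: $ND_R(w_\Psi)=\{\delta-\alpha\mid\alpha\in\Psi_{\inf}\}$ is the last assertion of Theorem~\ref{thm:CePa}, and $\Sigma D(\mathcal R_\Psi)=\{\delta-\alpha\mid\alpha\in\Psi_{\inf}\}$ is Proposition~\ref{prop:WallDom}; alternatively, once (i) and (ii) are established, Proposition~\ref{prop:LLLshi} together with Proposition~\ref{prop:WallDom} gives the equality directly. For item (iv): items (i) and (ii) show that $\Psi\mapsto w_\Psi$ is a well-defined map $\mathcal I(\Phi_0)\to L^0$ (low elements whose alcove lies in $C_\circ$, hence in $L^0\subseteq L^0_\shi$). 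It is injective because $w_\Psi$ is determined by $N(w_\Psi)$, which is determined by $\Psi$. It is surjective onto $L^0$: if $w\in L^0$ lies in a dominant Shi region $\mathcal R$, then $\mathcal R=\mathcal R_\Psi$ for a unique $\Psi\in\mathcal I(\Phi_0)$ by Shi's bijection \cite[Theorem~1.4]{Shi97}, and then $w$ and $w_\Psi$ are both low (the latter by (i)) and Shi-equivalent, so $N(w)=\cone_\Phi(\Sigma(w))=\cone_\Phi(\Sigma(\mathcal R))=\cone_\Phi(\Sigma(w_\Psi))=N(w_\Psi)$, whence $w=w_\Psi$. Thus $|L^0|=|\mathcal I(\Phi_0)|$, and Cellini--Papi's count \cite{CePa02} gives $|\mathcal I(\Phi_0)|=\cat(\Phi_0)$; the chain $|L^0|=|L^0_\shi|$ follows because $L^0\subseteq L^0_\shi$ (Proposition~\ref{prop:LLshi}) while $|L^0_\shi|=|\mathcal I(\Phi_0)|$ is Shi's count of dominant Shi regions, forcing equality of the finite sets $L^0=L^0_\shi$, which is item (v).

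I do not expect a serious obstacle here: the corollary is really a bookkeeping assembly of Theorem~\ref{thm:CePa}, Shi's bijection, and Propositions~\ref{prop:ShiSmall}, \ref{prop:WallDom}, \ref{prop:LLshi}, \ref{prop:LLLshi}. The one point needing a line of care is the computation $\Sigma(w_\Psi)=\{\delta-\alpha\mid\alpha\in\Psi\}$, i.e.\ that the cone $\cone_\Phi(\{\delta-\alpha\mid\alpha\in\Psi\})$ meets $\Sigma$ in exactly that set and not in any root of $\Phi_0^+$ nor in any $\delta-\beta$ with $\beta\notin\Psi$; this is handled cleanly using the explicit description $N(w_\Psi)=\mathbb L_\Psi=\bigcup_{k\ge 1}(k\delta-\Psi^k)$ from the proof of Theorem~\ref{thm:CePa}, since a small root in $\mathbb L_\Psi$ must have $k=1$ and root in $\Psi^1=\Psi$.
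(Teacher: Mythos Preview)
Your proposal is correct and follows essentially the same approach as the paper: items (i)--(iii) are deduced from Theorem~\ref{thm:CePa}, Proposition~\ref{prop:WallDom}, and Proposition~\ref{prop:ShiSmall} in the same way, and items (iv)--(v) are obtained via the injection $\Psi\mapsto w_\Psi$ into $L^0$, the inclusion $L^0\subseteq L^0_\shi$ from Proposition~\ref{prop:LLshi}, and the known cardinality $|L^0_\shi|=|\mathcal I(\Phi_0)|=\cat(\Phi_0)$. Your extra care in justifying $\Sigma(w_\Psi)=\{\delta-\alpha\mid\alpha\in\Psi\}$ via the explicit description $N(w_\Psi)=\mathbb L_\Psi$ is a welcome addition, as the paper states this equality without elaboration.
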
    
\begin{proof} (i) We have $w_\Psi\in L$  by definition of low elements since $\delta-\alpha\in \Sigma$ for all $\alpha\in \Psi\subseteq \Phi_0^+$.

\smallskip
\noindent (ii)  By Theorem~\ref{thm:CePa}, $\Sigma(w_\Psi)=N(w_\Psi)\cap\Sigma =\{ \delta-\beta \mid \beta \in \Psi\}$. Therefore $\Sigma(\mathcal R_\Psi)=\Sigma(w_\Psi)$ by Proposition~\ref{prop:WallDom}.  Hence $w_\Psi\cdot A_\circ\subseteq \mathcal R_\Psi$ By Proposition~\ref{prop:ShiSmall}~(1). Now, by Proposition~\ref{prop:DomCR}, $w_\Psi\cdot A_\circ$ is dominant, which implies that $\mathcal R_\Psi$ is also dominant.

\smallskip
\noindent (iii) follows immediately from Proposition~\ref{prop:WallDom} and Theorem~\ref{thm:CePa}.
\smallskip
\noindent Let us prove (iv):  From (i) and Theorem~\ref{thm:CePa} (1), there is an injective map from $\mathcal I(\Phi_0)$  into $L^0$. By Proposition \ref{prop:LLshi} we have $|\mathcal I(\Phi_0)|\leq |L^0|\leq |L_\shi^0|$. We conclude by Theorem~\ref{thm:CePa}(2) and~\cite{CePa02}.  Then, (v) is a consequence of {(i) and (iv)}.
\end{proof}

%%%
%%
 \section{The descent-wall theorem and the inductive case of the proof of Theorem~\ref{thm:main}}\label{se:Proof}  In order to handle the inductive case of the proof of Theorem~\ref{thm:main}, we need to prove first that the descent-walls of an element in $L_\shi$ are precisely the descent-walls of its corresponding Shi region, which takes the most part of this section. Then we conclude the proof of Theorem~\ref{thm:main} in \S\ref{ss:ProofMain}.

\subsection{The descent-wall theorem}

  \begin{thm}\label{thm:ShiLDes} Let  $w\in L_\shi$ with Shi region $\mathcal R$, then $ND_R(w)= \Sigma D(\mathcal R)$. 
\end{thm}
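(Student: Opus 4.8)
We already know from Proposition~\ref{prop:LLLshi} that $ND_R(w)\subseteq \Sigma D(\mathcal R)$ for any $w\in L_\shi$ with Shi region $\mathcal R$; the content of the theorem is the reverse inclusion $\Sigma D(\mathcal R)\subseteq ND_R(w)$. The plan is to argue by induction on $\ell(w)$ using the decomposition $w=uv$ with $u\in W_0$, $v\in {}^0W$, $\ell(w)=\ell(u)+\ell(v)$ from \S\ref{se:CelliniPapi}. In the base case $u=e$, the region $\mathcal R$ is dominant, $w=v=w_\Psi$ for the ideal $\Psi$ with $\Sigma(\mathcal R)=\{\delta-\beta\mid\beta\in\Psi\}$ (Corollary~\ref{cor:LLshiDom}(v) and the uniqueness in Theorem~\ref{thm:CePa}), and then $ND_R(w_\Psi)=\Sigma D(\mathcal R_\Psi)$ is exactly Corollary~\ref{cor:LLshiDom}(iii). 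So the work is entirely in the inductive step, where $\ell(u)\geq 1$.

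\textbf{The inductive step.} Pick $s\in S_0\cap D_L(w)$; such an $s$ exists since $\ell(u)\geq 1$ and $v\in{}^0W$ forces a left descent of $w$ to lie in $S_0$. By Proposition~\ref{prop:ShiS}, $sw\in L_\shi$, and its Shi region $\mathcal R'$ satisfies $s\cdot\mathcal R'\subseteq \mathcal R$ (Proposition~\ref{prop:ShiSmall}(2)). Since $\ell(sw)<\ell(w)$, the induction hypothesis gives $ND_R(sw)=\Sigma D(\mathcal R')$. Now I relate the descent-roots across $s$ on both sides. On the group side, Proposition~\ref{prop:Desc} gives $ND_R(sw)=s(ND_R(w)\setminus\{\alpha_s\})$, hence $ND_R(w)=s(ND_R(sw))\cup(\{\alpha_s\}\cap N(w))$, and since $s\in D_L(w)$ we have $\alpha_s\in N(w)$, so $ND_R(w)=s(ND_R(sw))\sqcup\{\alpha_s\}$, noting $s(ND_R(sw))\subseteq \Phi^+$ because $ND_R(sw)\subseteq N(sw)=s(N(w)\setminus\{\alpha_s\})$. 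The goal is thus to prove the matching identity for regions:
\begin{equation}\label{eq:regiondesc}
\Sigma D(\mathcal R)=s(\Sigma D(\mathcal R'))\sqcup\{\alpha_s\}.
\end{equation}
Given \eqref{eq:regiondesc}, the induction hypothesis $\Sigma D(\mathcal R')=ND_R(sw)$ immediately yields $\Sigma D(\mathcal R)=s(ND_R(sw))\sqcup\{\alpha_s\}=ND_R(w)$, completing the proof.

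\textbf{Proving the region identity \eqref{eq:regiondesc}.} This is where the combinatorics of admissible sign types and Proposition~\ref{cor:Key1} enter, and I expect it to be the main obstacle. First, $\alpha_s\in\Sigma D(\mathcal R)$: indeed $\alpha_s\in ND_R(w)\subseteq\Sigma D(\mathcal R)$ by Proposition~\ref{prop:LLLshi} applied to $w\in L_\shi$, since $\alpha_s\in N(w)$ corresponds to a wall of $A_\circ$ separating $A_\circ$ from $w\cdot A_\circ$. For the remaining roots, I must show that a small root $\theta\neq\alpha_s$ lies in $\Sigma D(\mathcal R)$ iff $s(\theta)\in\Sigma D(\mathcal R')$. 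Using Proposition~\ref{cor:Key1}(2), $\theta\in\Sigma D(\mathcal R)$ iff there is a Shi region $\mathcal R''$ with $\Sigma(\mathcal R'')=\Sigma(\mathcal R)\setminus\{\theta\}$; applying $s^{-1}=s$ and Proposition~\ref{prop:S0} (which says $s$ permutes $\Sigma\setminus\{\alpha_s,\delta-\alpha_s\}$), together with the transition Lemma~\ref{lem:DH} relating $\Sigma(\mathcal R)$ and $\Sigma(\mathcal R')$ via $\Sigma(\mathcal R)=\{\alpha_s\}\sqcup s(\Sigma(\mathcal R'))\cap\Sigma$, this should translate into the existence of a Shi region $\mathcal R'''$ with $\Sigma(\mathcal R''')=\Sigma(\mathcal R')\setminus\{s(\theta)\}$, i.e.\ $s(\theta)\in\Sigma D(\mathcal R')$. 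The delicate points are: (a) controlling the case $\theta=\delta-\alpha_s$ separately, where one must invoke Lemma~\ref{lem:Key1} and Condition $(\star)$ to see that the sign-type condition for $\delta-\alpha_s$ being a descent-wall of $\mathcal R$ matches a corresponding condition for $\mathcal R'$ — here the behavior of $X(\mathcal R,\cdot)$ versus $X(\mathcal R',\cdot)$ under the reflection $s$ on rank-$2$ subsystems containing $\alpha_s$ must be tracked carefully; and (b) verifying that the candidate region $\mathcal R'''$ produced abstractly from small inversion sets genuinely exists, which should follow from Theorem~\ref{thm:Shi}(2)–(3) since admissibility is a rank-$2$-local condition preserved under the reflection $s$ on the subsystems not involving $\alpha_s$, while those involving $\alpha_s$ are handled by Lemma~\ref{lem:Key1}. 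I would organize the verification of \eqref{eq:regiondesc} by splitting $\Phi_0^+$ according to whether a positive root is fixed, sent to a positive root, or equals $\alpha_s$ under $s$, reducing everything to the rank-$2$ tables of Figures~\ref{fig:SA2}, \ref{fig:SB2}, \ref{fig:SG2} via Remark~\ref{rem:Descent}.
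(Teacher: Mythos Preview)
Your overall strategy coincides with the paper's: induct on $\ell(u)$ in the coset decomposition $w=uv$ with $u\in W_0$, $v\in{}^0W$; the base case is exactly Corollary~\ref{cor:LLshiDom}(iii); and the inductive step is reduced to a region identity comparing $\Sigma D(\mathcal R)$ with $\Sigma D(\mathcal R')$. That identity is precisely Proposition~\ref{thm:SuffRegion}, and your plan to prove it by reducing to rank~$2$ via Theorem~\ref{thm:Shi} and Proposition~\ref{cor:Key1} is how the paper proceeds too (the paper first bifurcates on whether $s\cdot\mathcal R'=\mathcal R$ via Proposition~\ref{prop:UnionShi}, handling the equality case geometrically by Lemma~\ref{cor:ShiUnion} and the remaining case by the rank-$2$ analysis of Lemmas~\ref{lem1}--\ref{ex:Rank2-bis}).

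There is, however, a genuine slip in your argument. You assert $\alpha_s\in ND_R(w)$ on the grounds that ``$\alpha_s\in N(w)$ corresponds to a wall of $A_\circ$ separating $A_\circ$ from $w\cdot A_\circ$''. But that is the defining condition for $\alpha_s\in ND_L(w)$; elements of $ND_R(w)$ correspond to walls of $w\cdot A_\circ$, not of $A_\circ$ (see \S\ref{ss:RightLeft}). The claim is false in general: already for $w=s_1s_2$ in type~$\tilde A_2$ with $s=s_1$ one has $ND_R(w)=\{\alpha_1+\alpha_2\}$, and correspondingly $\alpha_1\notin\Sigma D(\mathcal R)$ for the region with sign type $(-,-,0)$. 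Consequently neither the identity $ND_R(w)=s(ND_R(sw))\sqcup\{\alpha_s\}$ nor your \eqref{eq:regiondesc} holds as written. What Propositions~\ref{prop:Desc} and~\ref{thm:SuffRegion} actually give are the versions with $\alpha_s$ removed, namely $ND_R(w)\setminus\{\alpha_s\}=s(ND_R(sw))$ and $\Sigma D(\mathcal R)\setminus\{\alpha_s\}=s(\Sigma D(\mathcal R'))$; combining these with the induction hypothesis yields $\Sigma D(\mathcal R)\setminus\{\alpha_s\}=ND_R(w)\setminus\{\alpha_s\}$, and one finishes using $ND_R(w)\subseteq\Sigma D(\mathcal R)$. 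A smaller remark: your ``delicate case~(a)'' with $\theta=\delta-\alpha_s$ is vacuous on the $\mathcal R$ side since $X(\mathcal R,\alpha_s)=-$ forces $\delta-\alpha_s\notin\Sigma(\mathcal R)$; where it is relevant is on the $\mathcal R'$ side when $X(\mathcal R',\alpha_s)=+$, and there the paper handles it not through Lemma~\ref{lem:Key1} but geometrically via Lemma~\ref{cor:ShiUnion}, this being exactly the case $s\cdot\mathcal R'=\mathcal R$.
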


The proof of the so called {\em descent-wall theorem} above is based on Proposition~\ref{prop:Desc} and on its analog  for descent-roots of Shi regions.

\begin{prop}\label{thm:SuffRegion} Let $w\in L_\shi$ with Shi region $\mathcal R$. Let $s\in D_L(w)\cap S_0$ and let $\mathcal R_1$ be the Shi region associated to $sw \in L_\shi$. Then  $\Sigma D(\mathcal R_1)=s(\Sigma D(\mathcal R)\setminus \{\alpha_s\})$. 
\end{prop}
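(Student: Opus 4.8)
The plan is to relate the descent-roots of $\mathcal R$ and $\mathcal R_1$ via the admissible-sign-type characterization in Proposition~\ref{cor:Key1} and the way $s\in S_0$ acts on sign types. First I would recall that, since $w\in L_\shi$ with $s\in D_L(w)$, Proposition~\ref{prop:ShiS} gives $sw\in L_\shi$ and that $g\sim_\shi sw\Rightarrow sg\sim_\shi w$; in particular $\Sigma(\mathcal R_1)=\Sigma(sw)$, and by Lemma~\ref{lem:DH} one has $\Sigma(\mathcal R)=\{\alpha_s\}\sqcup s(\Sigma(\mathcal R_1))\cap\Sigma$. Combined with Proposition~\ref{prop:Desc}, which already yields $ND_R(sw)=s(ND_R(w)\setminus\{\alpha_s\})$, the statement $\Sigma D(\mathcal R_1)=s(\Sigma D(\mathcal R)\setminus\{\alpha_s\})$ is exactly the "region-level" analog we need; it is not a formal consequence of Proposition~\ref{prop:Desc} because at this stage we do not yet know $ND_R=\Sigma D$ (that is Theorem~\ref{thm:ShiLDes}, whose proof uses this proposition). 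So the proof must proceed directly at the level of sign types.

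The key computation is how the sign type transforms under left multiplication by $s=s_{\alpha_s}\in S_0$. Using Proposition~\ref{descente indice k}(1), $k(sw,\alpha)=k(w,s(\alpha))+k(s,\alpha)$, together with Lemma~\ref{k groupe fini} (which gives $k(s,\alpha)=0$ for $\alpha\in\Phi_0^+\setminus\{\alpha_s\}$ and $k(s,\alpha_s)=-1$), one sees that for $\alpha\in\Phi_0^+\setminus\{\alpha_s\}$ the sign $X(\mathcal R_1,\alpha)$ is obtained from $X(\mathcal R,s(\alpha))$ (note $s$ permutes $\Phi_0^+\setminus\{\alpha_s\}$ by Proposition~\ref{prop:S0}), while the $\alpha_s$-coordinate flips sign in the expected way. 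Concretely: $X(\mathcal R_1,\alpha)=X(\mathcal R,s(\alpha))$ for $\alpha\neq\alpha_s$, and $X(\mathcal R_1,\alpha_s)$ is the "$+$"-type exactly when $X(\mathcal R,\alpha_s)$ was "$-$" or "$0$" appropriately. Thus the map $X(\mathcal R)\mapsto X(\mathcal R_1)$ is essentially relabeling coordinates by the permutation $s$, and it intertwines the operation "replace the $\beta$-coordinate by $0$": $X^{\beta}_{\mathcal R_1}$ corresponds under this relabeling to $X^{s(\beta)}_{\mathcal R}$ for $\beta\neq\alpha_s$. Since $s$ maps an irreducible rank-$2$ subsystem $\Psi$ to the irreducible rank-$2$ subsystem $s(\Psi)$, admissibility of the restricted sign type is preserved (Theorem~\ref{thm:Shi}(2) and Remark~\ref{rem:Descent}). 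Therefore, for $\theta\in\{\beta,\delta-\beta\}$ with $\beta\in\Phi_0^+\setminus\{\alpha_s\}$: $H_\theta$ is a descent-wall of $\mathcal R_1$ iff $X^{\beta}_{\mathcal R_1}$ is admissible iff $X^{s(\beta)}_{\mathcal R}$ is admissible iff $H_{s(\theta)}$ is a descent-wall of $\mathcal R$ (using Proposition~\ref{cor:Key1} both ways). This gives the set equality $\Sigma D(\mathcal R_1)\cap(\Sigma\setminus\{\alpha_s,\delta-\alpha_s\})=s(\Sigma D(\mathcal R)\setminus\{\alpha_s,\delta-\alpha_s\})$, using that $s$ acts on small roots by Proposition~\ref{prop:S0}.

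It then remains to handle the two "boundary" roots $\alpha_s$ and $\delta-\alpha_s$. Note $s(\delta-\alpha_s)=\delta+s(-\alpha_s)=\delta-(-\alpha_s)$ lies outside $\Sigma$ (it is $\delta+\alpha_s$, not a small root — more precisely $s(\alpha_s)=-\alpha_s$ and Eq.~\eqref{eq:Ref2} gives $s(\delta-\alpha_s)=\delta-(-\alpha_s)$, which is not in $\Phi^+$ as an element of $\Sigma$), so $\delta-\alpha_s$ never contributes to $s(\Sigma D(\mathcal R)\setminus\{\alpha_s\})$; one must therefore check that $\delta-\alpha_s\notin\Sigma D(\mathcal R_1)$. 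But $\alpha_s\notin\Sigma(sw)$ since $\ell(sw)<\ell(w)$ forces $\alpha_s\notin N(sw)$; equivalently $X(\mathcal R_1,\alpha_s)$ is not "$+$", so $\delta-\alpha_s\notin\Sigma(\mathcal R_1)\supseteq\Sigma D(\mathcal R_1)$, as needed. Symmetrically, on the $\mathcal R$ side the root $\alpha_s$ is explicitly removed from $\Sigma D(\mathcal R)$ before applying $s$, so we do not need to decide whether $\alpha_s\in\Sigma D(\mathcal R)$; and $s(\alpha_s)=-\alpha_s\notin\Sigma$, consistent with $\alpha_s\notin\Sigma D(\mathcal R_1)$ being irrelevant to the claimed equality only if it does not appear — here $\alpha_s\in\Sigma(\mathcal R_1)$ would require $X(\mathcal R_1,\alpha_s)="-"$, which can happen, so one checks separately that $\alpha_s\notin\Sigma D(\mathcal R_1)$ by observing that the region on the other side of $H_{\alpha_s}$ from $\mathcal R_1$ is $s\cdot\mathcal R$-type but the relevant sign-to-$0$ flip at $\alpha_s$ does not produce an admissible type because of the data forced by $sw\in L_\shi$ and $s\notin D_L(sw)$; I would make this precise using Proposition~\ref{cor:Key1}(2) and Lemma~\ref{lem:DH}.

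**Main obstacle.** The routine part is the sign-type bookkeeping via Proposition~\ref{descente indice k} and Lemma~\ref{k groupe fini}; the genuinely delicate point is the treatment of the coordinate $\alpha_s$ itself — showing $\alpha_s\notin\Sigma D(\mathcal R_1)$ and that nothing spurious appears or disappears at $H_{\alpha_s}$ and $H_{\delta-\alpha_s}$ — which requires carefully using that $sw$ is the minimal element of $\mathcal R_1$ and that $s\notin D_L(sw)$, rather than just the combinatorics of sign types. This is where I expect to spend most of the care.
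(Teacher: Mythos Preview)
Your relabeling argument has a genuine gap. You claim that $X^{\beta}_{\mathcal R_1}$ and $X^{s(\beta)}_{\mathcal R}$ correspond under the coordinate permutation $s$, so that admissibility transfers. This is correct for rank-$2$ subsystems $\Psi$ with $\alpha_s\notin\Psi$: then $s$ permutes $\Psi^+\subseteq\Phi_0^+\setminus\{\alpha_s\}$ and every coordinate matches. But when $\alpha_s\in\Psi$, the map $s$ fixes $\Psi$ setwise yet sends $\alpha_s\mapsto-\alpha_s\notin\Psi^+$, so there is no relabeling of $\Psi^+$. At the $\alpha_s$-coordinate one has $Y^\beta_{\alpha_s}=X(\mathcal R_1,\alpha_s)\in\{0,+\}$, whereas $X^{s(\beta)}_{\alpha_s}=X(\mathcal R,\alpha_s)=-$; the two restricted sign types simply disagree there, and nothing you have written explains why their admissibility should nonetheless be equivalent. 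The paper confronts exactly this obstruction: it first separates the case $s\cdot\mathcal R_1=\mathcal R$ (equivalently $k(w,\alpha_s)<-1$), where a direct geometric argument suffices because $s$ carries walls of $\mathcal R$ bijectively to walls of $\mathcal R_1$; in the remaining case $X(\mathcal R_1,\alpha_s)=0$, and the equivalence ``$Y^\alpha$ admissible $\iff X^{s(\alpha)}$ admissible'' is reduced (via the relabeling you describe, which does work on subsystems avoiding $\alpha_s$) to the rank-$2$ subsystems containing both $\alpha$ and $\alpha_s$, which are then verified case-by-case in types $A_2$, $B_2$, $G_2$.

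Your boundary analysis is also off. From $\alpha_s\notin N(sw)$ you get $X(\mathcal R_1,\alpha_s)\neq-$, not $\neq+$; so your argument that $\delta-\alpha_s\notin\Sigma(\mathcal R_1)$ is invalid. Indeed when $k(w,\alpha_s)<-1$ one has $X(\mathcal R_1,\alpha_s)=+$ and $\delta-\alpha_s\in\Sigma(\mathcal R_1)$; this is the case $s\cdot\mathcal R_1=\mathcal R$ where the geometric lemma handles it. Conversely, $X(\mathcal R_1,\alpha_s)=-$ never occurs, so your separate worry about $\alpha_s\in\Sigma D(\mathcal R_1)$ is unnecessary. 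You correctly sensed that the $\alpha_s$-coordinate is the delicate point, but the difficulty is not in the boundary roots themselves; it is that the sign at $\alpha_s$ contaminates the admissibility test for \emph{every} other root $\beta$ lying in a rank-$2$ subsystem with $\alpha_s$, and this cannot be resolved by bookkeeping alone.
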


Before proving Proposition~\ref{thm:SuffRegion}, we prove the descent-wall theorem. The proof of Proposition~\ref{thm:SuffRegion} is given in \S\ref{ss:ProofSuffRegion}.

\begin{proof}[Proof of Theorem~\ref{thm:ShiLDes}] We decompose $w=uv$ with $u\in W_0$ and $v \in {}^0W$. We show by induction on $\ell(u)$ that $ND_R(w)= \Sigma D(\mathcal R)$. Notice that $ND_R(w)\subseteq  \Sigma D(\mathcal R)$ by Proposition~\ref{prop:LLLshi}.

If $\ell(u)=0$ then $\mathcal R$ is a dominant Shi region. Therefore $ND_R(w)= \Sigma D(\mathcal R)$ by Proposition~\ref{cor:LLshiDom} $(iii)$. Assume now that $\ell(u)>0$.  Then there is $s\in D_L(u)\subseteq S_0$. Therefore $\alpha_s\in \Sigma(\mathcal R)=\Sigma(w)$ but $\delta-\alpha_s\notin \Sigma(\mathcal R)=\Sigma(w)$.  Set $w'=(su)v$ and $\mathcal R'$ its Shi region. By Proposition~\ref{prop:ShiS} we know that $w'\in L_\shi$ and $\ell(w')=\ell(w)-1$. By induction, we have $ND_R(w')= \Sigma D(\mathcal R')$. By Theorem \ref{thm:SuffRegion} and Proposition~\ref{prop:Desc} we obtain that
$
\Sigma D(\mathcal R)=\{\alpha_s\}\sqcup s(\Sigma D (\mathcal R'))=\{\alpha_s\}\sqcup s(ND_R(w')) = ND_R(w)
$
\end{proof}

 \subsection{Suffix decomposition of some Shi regions} By Proposition~\ref{prop:ShiSmall}, we know that, for a Shi region~$\mathcal R$ such that $\alpha_s\in \Sigma(\mathcal R)$, there is  a Shi region $\mathcal R_1$ such that $s\cdot \mathcal R_1\subseteq \mathcal R$; or equivalently $ \mathcal R_1\subseteq s\cdot \mathcal R$.  In order to prove Proposition~\ref{thm:SuffRegion}, we need first to describe precisely, in term of sign-types, the image $s\cdot \mathcal R$ of Shi region $\mathcal R$ with $s\in S_0$.  
 
\begin{prop}\label{prop:UnionShi}  Let $w\in L_\shi$ with Shi region $\mathcal{R}$, that is, $w\in \widetilde{\mathcal R}$. Let $s \in S_0$ such that $X(\mathcal{R},\alpha_s) = -$; so $\alpha_s\in \Sigma(\mathcal R)$ and $s\in D_L(w)$. Denote $\mathcal R_1$ the Shi region of $sw\in L_\shi$, that is, $sw\in \widetilde{\mathcal R_1}$. Consider the two following subsets of $W$: 
$$
\widetilde{\mathcal{R}_0^s} = \{x \in s \widetilde{\mathcal{R}}~|~k(x,\alpha_s) =0\}\ \textrm{ and } \widetilde{\mathcal{R}_+^s }= \{x \in s \widetilde{ \mathcal{R}}~|~k(x,\alpha_s) >0\}.
$$ 
\begin{enumerate}
\item We have $s\widetilde{\mathcal{R}} = \widetilde{\mathcal{R}_0^s}  \sqcup \widetilde{\mathcal{R}_+^s}$.
\item If $k(w,\alpha_s) = -1$ then $\widetilde{\mathcal{R}_0^s} \neq \emptyset$ is an equivalence class for $\sim_\shi$ that corresponds to the Shi region $\mathcal{R}_0^s =\mathcal R_1$. Moreover $X(\mathcal{R}_1,\alpha_s) = 0$ and $\widetilde{\mathcal{R}_+^s}$ corresponds  to a Shi region $\mathcal{R}_2$ if it is non-empty.
\item If $k(w,\alpha_s) < -1$ then $\widetilde{\mathcal{R}_0^s} = \emptyset$ and  $\widetilde{\mathcal{R}_+^s} \neq \emptyset$ is an equivalence class for $\sim_\shi$ that corresponds to the Shi region $\mathcal{R}_+^s=\mathcal R_1.$ Moreover $\mathcal R_1=s\cdot \mathcal R$ and  $X(\mathcal{R}_1,\alpha_s) = +$. 
\item The signs of the elements of $s\widetilde{\mathcal{R}}$ can only differ over the position $\alpha_s$.  In particular,   $X(\mathcal R_1,\gamma)=X(\mathcal R, s(\gamma))$ for all $\gamma\in \Phi_0^+\setminus\{\alpha_s\}$. 
\end{enumerate}
\end{prop}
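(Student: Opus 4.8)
The plan is to work entirely with Shi coefficients and the formula from Proposition~\ref{descente indice k}~(1), namely $k(sx,\alpha) = k(x,s(\alpha)) + k(s,\alpha)$, together with Lemma~\ref{k groupe fini} which gives $k(s,\alpha_s) = -1$ and $k(s,\alpha)=0$ for $\alpha\in\Phi_0^+\setminus\{\alpha_s\}$ (since $N(s)=\{\alpha_s\}$). First I would establish the key computational fact that for any $x\in W$ and any $\gamma\in\Phi_0^+\setminus\{\alpha_s\}$ one has $s(\gamma)\in\Phi_0^+$ and $k(sx,\gamma) = k(x,s(\gamma))$, whereas for $\gamma=\alpha_s$ we get $s(\alpha_s)=-\alpha_s$ and, using Eq.~(\ref{neg shi coeff}), $k(sx,\alpha_s) = k(x,-\alpha_s) + k(s,\alpha_s) = -k(x,\alpha_s) - 1$. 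This last identity is the engine of the whole proposition: if $x$ ranges over $s\widetilde{\mathcal R}$, write $x = sy$ with $y\in\widetilde{\mathcal R}$, so $k(x,\alpha_s) = -k(y,\alpha_s)-1$, and since $X(\mathcal R,\alpha_s)=-$ means $k(y,\alpha_s)\le -1$ for all $y\in\widetilde{\mathcal R}$, we get $k(x,\alpha_s)\ge 0$ for all $x\in s\widetilde{\mathcal R}$. This immediately gives (1): $s\widetilde{\mathcal R}$ splits into the part where $k(\cdot,\alpha_s)=0$ and the part where $k(\cdot,\alpha_s)>0$, with no negative values occurring.

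For part (4), I would argue as follows. Since $w\in L_\shi$, Eq.~(\ref{eq:Minstar}) tells us $|k(w,\alpha_s)| = \min\{|k(g,\alpha_s)| : g\sim_\shi w\}$; but $X(\mathcal R,\alpha_s)=-$ forces all these values negative, so in fact $k(g,\alpha_s)$ is constant on $\widetilde{\mathcal R}$ for those $g$ achieving the minimum — more to the point, applying the identity $k(sy,\gamma)=k(y,s(\gamma))$ for $\gamma\ne\alpha_s$ shows that the sign type of $sy$ at every position other than $\alpha_s$ equals the sign type of $y$ at position $s(\gamma)$, which is the same value for all $y\in\widetilde{\mathcal R}$ because $\zeta$ is constant on a Shi region (Theorem~\ref{thm:Shi}~(3)). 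Hence the elements of $s\widetilde{\mathcal R}$ all share the same sign type off position $\alpha_s$, and that shared value at position $\gamma$ is $X(\mathcal R, s(\gamma))$. Because $sw$ lies in $s\widetilde{\mathcal R}$ with $k(sw,\alpha_s) = -k(w,\alpha_s)-1\ge 0$, its region $\mathcal R_1$ has $X(\mathcal R_1,\gamma) = X(\mathcal R,s(\gamma))$ for $\gamma\ne\alpha_s$, proving (4). Note $s(\gamma)$ runs over $\Phi_0^+\setminus\{\alpha_s\}$ as $\gamma$ does, by Proposition~\ref{prop:S0}-type reasoning (the classical fact that $s$ permutes $\Phi_0^+\setminus\{\alpha_s\}$).

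For (2) and (3) I would case on $k(w,\alpha_s)$. If $k(w,\alpha_s)=-1$, then $k(sw,\alpha_s) = -(-1)-1 = 0$, so $sw\in\widetilde{\mathcal R_0^s}$, showing $\widetilde{\mathcal R_0^s}\ne\emptyset$; since all elements of $s\widetilde{\mathcal R}$ agree on sign type off $\alpha_s$ by (4), the subset where $k(\cdot,\alpha_s)=0$ is exactly one $\sim_\shi$-class (its full sign type is determined) and equals the region $\mathcal R_1$ of $sw$; what remains of $s\widetilde{\mathcal R}$, namely $\widetilde{\mathcal R_+^s}$, then has the single differing sign $+$ at position $\alpha_s$, hence is either empty or a single region $\mathcal R_2$. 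If instead $k(w,\alpha_s)<-1$: I would first show $\widetilde{\mathcal R_0^s}=\emptyset$ — here is the main obstacle — by noting that $\widetilde{\mathcal R_0^s}\ne\emptyset$ would force some $g\sim_\shi w$ with $k(sg,\alpha_s)=0$, i.e. $k(g,\alpha_s)=-1$, contradicting that $|k(\cdot,\alpha_s)|\ge |k(w,\alpha_s)|>1$ is the minimum over the class (Eq.~(\ref{eq:Minstar}) applied to $w\in L_\shi$). Granting that, $s\widetilde{\mathcal R}=\widetilde{\mathcal R_+^s}$ is a single sign type hence a single region $\mathcal R_1$, containing $sw$; and since $s$ is an involution acting on alcoves, $s\cdot\mathcal R = \bigcup_{y\in\widetilde{\mathcal R}} sy\cdot A_\circ = \mathcal R_1$, with $X(\mathcal R_1,\alpha_s)=+$. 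The delicate point throughout — which I would need to pin down carefully — is the interplay between the \emph{regions} $s\cdot\mathcal R_1\subseteq\mathcal R$ from Proposition~\ref{prop:ShiSmall}~(2) and the alcove-level image $s\widetilde{\mathcal R}$: one must check that "the set of alcoves $\{sy\cdot A_\circ : y\in\widetilde{\mathcal R}\}$" is genuinely partitioned into at most two Shi regions, which rests on the sign-type computation of (4) plus the admissibility constraints of Theorem~\ref{thm:Shi} ruling out that a single extra sign flip at one position could split a region into more pieces.
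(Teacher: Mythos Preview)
Your proposal is correct and follows essentially the same approach as the paper: both proofs rest on the identity $k(su,\alpha_s)=-(k(u,\alpha_s)+1)$ from Proposition~\ref{descente indice k} and Eq.~(\ref{neg shi coeff}), and on $k(su,\gamma)=k(u,s(\gamma))$ for $\gamma\in\Phi_0^+\setminus\{\alpha_s\}$ from Lemma~\ref{k groupe fini}, exactly as you outline. The paper in fact leaves (2) and (3) to the reader, so your use of Eq.~(\ref{eq:Minstar}) to force $\widetilde{\mathcal R_0^s}=\emptyset$ when $k(w,\alpha_s)<-1$ is a legitimate way to fill that in.

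Regarding the ``delicate point'' you flag at the end: it dissolves once you invoke Theorem~\ref{thm:Shi}~(3) directly. Since elements of $s\widetilde{\mathcal R}$ share a common sign type off $\alpha_s$ and take value $0$ or $+$ at $\alpha_s$, there are at most two sign types present, hence at most two $\sim_\shi$-classes---no appeal to admissibility constraints is needed. The only thing left is to check that $\widetilde{\mathcal R_0^s}$ (respectively $\widetilde{\mathcal R_+^s}$), when nonempty, is a \emph{full} $\sim_\shi$-class rather than a proper subset of one; this follows from the second assertion of Proposition~\ref{prop:ShiS}, which gives $g\sim_\shi sw \implies sg\sim_\shi w$, so $g\in s\widetilde{\mathcal R}$.
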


See Table~\ref{tab1} or Table~\ref{tab2} for examples in rank $2$.

\begin{proof}  First, observe that $\alpha_s\in \Sigma(w)=\Sigma(\mathcal R)$ by Proposition~\ref{prop:Shi} and Proposition~\ref{prop:ShiSmall}. In particular,  $s\in D_L(w)$ and $sw\in L_\shi$ by Proposition~\ref{prop:ShiS}. 

\smallskip
\noindent  {\em (1)}.  Let $u \in \widetilde{\mathcal{R}}$, then  by  Lemma~\ref{k groupe fini} and Proposition~\ref{descente indice k}  we obtain: 
\begin{equation}\label{eq:prrr}
k(su,\alpha_s) = k(u,s(\alpha_s)) + k(s,\alpha_s) = k(u,-\alpha_s) - 1 = -(k(u,\alpha_s) +1).
\end{equation}
Moreover, since $X(\mathcal{R},\alpha_s) = -$,  we have $k(u,\alpha_s) \leq -1$. If $ k(u,\alpha_s) = -1$ then $k(su,\alpha_s) =0$. Now, if $ k(u,\alpha_s) < -1$, then $k(su,\alpha_s)>0$. Therefore $su \in \widetilde{\mathcal{R}_0^s}  \sqcup \widetilde{\mathcal{R}_+^s}$, which implies that $s \widetilde{ \mathcal{R} }\subseteq  \widetilde{\mathcal{R}_0^s}  \sqcup \widetilde{\mathcal{R}_+^s}$. The converse inclusion is obvious. 

\smallskip
\noindent  {\em (4)}. Let $\gamma \in \Phi_0^+ \setminus \{\alpha_s\}$ and  $u \in \widetilde{\mathcal{R}}$. By Proposition \ref{descente indice k}, we have  $k(su,\gamma) = k(u, s(\gamma))+k(s,\gamma)$. Since $\gamma \in \Phi_0^+ \setminus \{\alpha_s\}$ we have $s(\gamma) \in \Phi_0^+\setminus \{\alpha_s\}$.  Therefore, by Lemma \ref{k groupe fini}, we have $k(s,\gamma)=0$. It follows that $k(su,\gamma) = k(u, s(\gamma))$. Therefore, by Eq.~(\ref{eq:Xw}), we obtain
$X(su,\gamma)=X(u,s(\gamma))=X(\mathcal R,s(\gamma))$, that is, the signs of the elements in $s\widetilde{\mathcal R}$ can only differ over $\alpha_s$.  In particular, for $w\in L_\shi$ and its suffix $sw\in L_\shi$, we obtain:     
$$
 X(\mathcal R_1,\gamma)=X(sw,\gamma)=X(w,s(\gamma))=X(\mathcal R,s(\gamma)),
$$
for all $\gamma \in \Phi_0^+ \setminus \{\alpha_s\}$.

\smallskip
\noindent  {\em (2) and (3)}. By the same arguments as in the proof of {\em (1)} applied to $w\in \widetilde{\mathcal R}$ we obtain  two cases:  $ k(w,\alpha_s) = -1$ and  $ k(w,\alpha_s) < -1$. The details are similar to the cases (1) and (4) above and are left to the reader.
%\smallskip
%\noindent {\bf Case 1:}   $ k(w,\alpha_s) = -1$ and $k(sw,\alpha_s) = 0$.  Therefore $X(\mathcal R_1,\alpha_s)=0$ and $sw\in \widetilde{\mathcal{R}_0^s} \neq \emptyset$. It remains to prove that $\mathcal{R}_0^s=\mathcal R_1$. The fact that $\mathcal{R}_0^s\subseteq \mathcal R_1$ follows from {\em (4)} and Corollary~\ref{cor:ShiRegion}. Now, let $v\in \widetilde{\mathcal R_1}$, then $v\sim_\shi sw$. Then $sv\in \widetilde{\mathcal R}$ by Proposition~\ref{prop:ShiS}. So $v\in s\widetilde{\mathcal R}$. Finally, $v\in \widetilde{\mathcal{R}_0^s}$ by {\em (1)} since $X(v,\alpha_s)=X(\mathcal R_1,\alpha_s)=0$ implies $k(v,\alpha_s)=0$. The same line of reasoning shows that $\mathcal{R}_+^s$ is a Shi region if it is non-empty.
%\smallskip
%\noindent {\bf Case 2:}  $k(w,\alpha_s) < -1$ and  $k(sw,\alpha_s) > 0$. Then $sw\in\widetilde{\mathcal{R}_+^s} \neq \emptyset$. As in Case 1 above, we show that $\mathcal R_1= \mathcal{R}_+^s$. Therefore  $X(\mathcal R_1,\alpha_s)=+$. Now, let $u\in\widetilde{\mathcal R}$.  By minimality of $w$, see Eq.~(\ref{eq:Minstar}), we have $k(u,\alpha_s) \leq k(w,\alpha_s) <-1$. Then $k(su,\alpha_s) = -(k(u,\alpha_s) +1)>0$ by Eq.~(\ref{eq:prrr}). Therefore, $su\in \widetilde{\mathcal{R}_+^s}$. We conclude by {\em (1)} that $\mathcal{R}_0^s = \emptyset$ and $\mathcal R_1=s\cdot \mathcal R$.  
\end{proof}

\subsection{Proof of Proposition~\ref{thm:SuffRegion}}\label{ss:ProofSuffRegion} Let $w\in L_\shi$ with Shi region $\mathcal R$. Let $s\in D_L(w)$ and let $\mathcal R_1$ be the Shi region associated to $sw\in L_\shi$. Notice that $X(\mathcal R,\alpha_s)=-$. 

Thanks to Proposition~\ref{prop:UnionShi}, we have two cases to consider: either $s\cdot \mathcal R_1=\mathcal R$ or there is another Shi region $\mathcal R_2$ distinct from $\mathcal R_1$ such that $s\cdot \mathcal R_2\subseteq \mathcal R$.

The case where $s\cdot \mathcal R_1=\mathcal R$ is settled  by the following lemma.

\begin{lem}\label{cor:ShiUnion} Let $\mathcal R$ be a Shi region and $s\in  S_0$ such that $X(\mathcal R,\alpha_s)=-$. If  a Shi region $\mathcal R_1$ is such that $s\cdot \mathcal R_1=\mathcal R$, then $\Sigma D(\mathcal R_1)=s(\Sigma D(\mathcal R)\setminus \{\alpha_s\})$.
\end{lem}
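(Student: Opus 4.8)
The plan is to combine the characterization of descent-roots via admissible sign types (Proposition~\ref{cor:Key1}, or rather its equivalent form in terms of separation sets) with the structural description of $s\cdot\mathcal R$ given in Proposition~\ref{prop:UnionShi}. Since $X(\mathcal R,\alpha_s)=-$, we know $\alpha_s\in\Sigma(\mathcal R)$; and the hypothesis $s\cdot\mathcal R_1=\mathcal R$ places us exactly in case~(3) of Proposition~\ref{prop:UnionShi}, so that $X(\mathcal R_1,\alpha_s)=+$, i.e.\ $\delta-\alpha_s\in\Sigma(\mathcal R_1)$, while for every $\gamma\in\Phi_0^+\setminus\{\alpha_s\}$ we have $X(\mathcal R_1,\gamma)=X(\mathcal R,s(\gamma))$ by part~(4). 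From the latter and Proposition~\ref{prop:Shi} one reads off directly that $\Sigma(\mathcal R_1)\setminus\{\delta-\alpha_s\} = s\big(\Sigma(\mathcal R)\setminus\{\alpha_s\}\big)$: a root $\gamma\in\Phi_0^+$ lies in $\Sigma(\mathcal R_1)$ iff $X(\mathcal R_1,\gamma)=-$ iff $X(\mathcal R,s(\gamma))=-$ iff $s(\gamma)\in\Sigma(\mathcal R)$, and similarly $\delta-\gamma\in\Sigma(\mathcal R_1)$ iff $\delta-s(\gamma)\in\Sigma(\mathcal R)$; since $s$ permutes $\Phi_0^+\setminus\{\alpha_s\}$ this matches the claim on inversion sets of the two regions.

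Next I would transport this to descent-roots using the equivalence (1)$\Leftrightarrow$(2) of Proposition~\ref{cor:Key1}: $\theta\in\Sigma D(\mathcal R)$ iff there is a Shi region $\mathcal R'$ with $\Sigma(\mathcal R')=\Sigma(\mathcal R)\setminus\{\theta\}$. Take $\theta\in\Sigma D(\mathcal R)$ with $\theta\neq\alpha_s$ (we need to treat $\alpha_s$ separately; note $\alpha_s\notin\Sigma D(\mathcal R_1)$ after we relabel, and the set on the right-hand side, $s(\Sigma D(\mathcal R)\setminus\{\alpha_s\})$, deliberately excludes it). Writing $\theta'\in\{\gamma,\delta-\gamma\}$ for the root obtained by replacing the $\Phi_0^+$-component $s(\gamma)$ of $\theta$ by $\gamma$ (where $\theta$ involves $s(\gamma)$), I claim $\theta'\in\Sigma D(\mathcal R_1)$. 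Indeed, if $\Sigma(\mathcal R')=\Sigma(\mathcal R)\setminus\{\theta\}$, then the region $\mathcal R_1'$ defined analogously to $\mathcal R_1$ (apply the $s$-action / Proposition~\ref{prop:UnionShi} to $\mathcal R'$, or use Proposition~\ref{prop:ShiSmall}(2) together with the fact that $\alpha_s\in\Sigma(\mathcal R')$ still holds since $\theta\neq\alpha_s$) satisfies $\Sigma(\mathcal R_1')=\Sigma(\mathcal R_1)\setminus\{\theta'\}$, by the same ``$s$ permutes $\Phi_0^+\setminus\{\alpha_s\}$'' computation as above. Hence $\theta'\in\Sigma D(\mathcal R_1)$. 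The reverse inclusion is symmetric, using that $s\cdot\mathcal R_1=\mathcal R$ lets us run the argument backwards (apply $s$ to $\mathcal R_1$). This gives $s(\Sigma D(\mathcal R)\setminus\{\alpha_s\})\subseteq\Sigma D(\mathcal R_1)$ and the converse, hence equality once we check the $\alpha_s$ bookkeeping, i.e.\ that $\alpha_s$ itself contributes nothing: $\alpha_s\notin s(\Sigma D(\mathcal R)\setminus\{\alpha_s\})$ trivially, and we must observe $\delta-\alpha_s\notin\Sigma D(\mathcal R_1)$, which is consistent because in case~(3) the wall $H_{\alpha_s}$ (not $H_{\delta-\alpha_s}$) is the one being crossed when passing from $\mathcal R_1$ to $\mathcal R$.

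The main obstacle I anticipate is making the ``apply $s$ to an auxiliary region $\mathcal R'$'' step fully rigorous: one needs that whenever $\mathcal R'$ shares the property $\alpha_s\in\Sigma(\mathcal R')$, the region $s\cdot\mathcal R'$ is again a single Shi region with the controlled sign type, which is exactly what Proposition~\ref{prop:UnionShi}(3)--(4) provides, but applied to the minimal element of $\mathcal R'$ rather than of $\mathcal R$; one should check the hypothesis $k(w',\alpha_s)<-1$ transfers, or alternatively avoid this by phrasing everything through separation sets and Proposition~\ref{cor:Key1}(2), which only requires the \emph{existence} of a neighbouring region and never needs minimal elements. I would favor the separation-set route, since then the whole proof reduces to the bijection $\gamma\leftrightarrow s(\gamma)$ on $\Phi_0^+\setminus\{\alpha_s\}$ intertwining $\Sigma(\mathcal R)$-membership with $\Sigma(\mathcal R_1)$-membership, applied uniformly to $\mathcal R$, $\mathcal R_1$, and all their one-wall neighbours.
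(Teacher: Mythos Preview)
Your approach via sign types and Proposition~\ref{prop:UnionShi} can be made to work, but it is far more elaborate than what the situation calls for, and it has a genuine gap. The paper's proof is a four-line geometric argument that you are missing: since $s\cdot\mathcal R_1=\mathcal R$ \emph{exactly}, the isometry $s$ induces a bijection between the walls of $\mathcal R_1$ and those of $\mathcal R$; a wall $H$ separates $\mathcal R_1$ from $A_\circ$ iff $s\cdot H$ separates $\mathcal R$ from $s\cdot A_\circ$, and since $H_{\alpha_s}$ is the only hyperplane separating $A_\circ$ from $s\cdot A_\circ$, the descent-walls match up to that single exception. No sign types, no auxiliary neighbouring regions, no Proposition~\ref{prop:UnionShi}. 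The combinatorial machinery you deploy is exactly what the paper reserves for the \emph{other} case of Proposition~\ref{thm:SuffRegion}, where $s\cdot\mathcal R$ genuinely splits into two Shi regions (Lemmas~\ref{lem1} and~\ref{lem2}); here the geometry does all the work.

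The gap is your handling of $\delta-\alpha_s$. First, the hypothesis $s\cdot\mathcal R_1=\mathcal R$ does not place you ``exactly in case~(3)'': it can equally arise in case~(2) of Proposition~\ref{prop:UnionShi} with $\widetilde{\mathcal R_+^s}=\emptyset$, where $X(\mathcal R_1,\alpha_s)=0$. More seriously, in case~(3) you assert $\delta-\alpha_s\notin\Sigma D(\mathcal R_1)$ ``because $H_{\alpha_s}$ (not $H_{\delta-\alpha_s}$) is the one being crossed when passing from $\mathcal R_1$ to $\mathcal R$''. This is wrong: in case~(3) one has $\mathcal R_1\subseteq\{\langle x,\alpha_s\rangle>1\}$ and $\mathcal R\subseteq\{\langle x,\alpha_s\rangle<-1\}$, so \emph{both} $H_{\alpha_s}$ and $H_{\delta-\alpha_s}$ are crossed. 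The correct reason $H_{\delta-\alpha_s}$ is not a wall of $\mathcal R_1$ is again geometric: its image $s\cdot H_{\delta-\alpha_s}=\{\langle x,\alpha_s\rangle=-1\}$ would have to be a wall of the Shi region $\mathcal R$, yet it is not a hyperplane of $\shi(W,S)$. So the very point where your argument needs completing is the point at which the paper's direct geometric approach becomes unavoidable.
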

\begin{proof} Since $s\cdot \mathcal R_1=\mathcal R$,  the walls of $\mathcal R_1$ are the image of the walls of $\mathcal R$ under $s$. So the corollary follows from the following observations: (1) a wall $H$ separates $\mathcal R_1$ from $A_\circ$ if and only if $s\cdot H$ separates $s\cdot \mathcal R_1=\mathcal R$ from $s\cdot A_\circ$; (2) the only wall separating $s\cdot A_\circ$ from $A_\circ$ is $H_{\alpha_s}$ and so $A_\circ$ and $s\cdot A_\circ$ are on the same side of any other hyperplane in the Coxeter arrangement.
\end{proof}

Consider now the case for which there is another Shi region $\mathcal R_2$ distinct from $\mathcal R_1$ such that $s\cdot \mathcal R_2\subseteq \mathcal R$. By Proposition~\ref{prop:UnionShi}~(1) and (4) we have:
$$
X(\mathcal R_1,\alpha_s)=0\textrm{ and } X(\mathcal R_1,\gamma)=X(\mathcal R, s(\gamma))~\textrm{for all  }\gamma\in \Phi_0^+\setminus\{\alpha_s\}.
$$ 
In particular, $\alpha_s\in \Sigma D(\mathcal R)$ but $\alpha_s\notin \Sigma D(\mathcal R_1)$.  

For $\alpha\in \Phi_0^+\setminus \{\alpha_s\}$,  we consider the following sign types:
\begin{itemize}
\item $X^{s(\alpha)}=(X^{s(\alpha)}_\gamma)_{\gamma\in \Phi_0^+}$ is defined by $X^{s(\alpha)}_{s(\alpha)} = 0$ and $X^{s(\alpha)}_\gamma = X(\mathcal R,\gamma)$ for  $\gamma\in \Phi_0^+\setminus\{s(\alpha)\}$. 

\item $Y^\alpha=(Y^\alpha_\gamma)_{\gamma\in \Phi_0^+}$ is defined by $Y^\alpha_\alpha = 0$ and $Y^\alpha_\gamma = X(\mathcal R_1,\gamma)$ for  $\gamma\in \Phi_0^+\setminus\{\alpha\}$. 
\end{itemize}
Thanks to Proposition~\ref{cor:Key1}, we know that $X^\alpha$ is admissible if and only if either $\alpha$ or $\delta-\alpha$ is a descent-root of $\mathcal R$. Similarly, $Y^\alpha$ is admissible if and only if either $\alpha$ or $\delta-\alpha$ is a descent-root of $\mathcal R_1$.

\begin{lem}\label{lem1}  Assume that, for any $\alpha\in \Phi_0^+\setminus\{\alpha_s\}$, we have  $Y^\alpha$ is admissible if and only if $X^{s(\alpha)}$ is admissible. Then $\Sigma D(\mathcal R_1)=s(\Sigma D(\mathcal R)\setminus \{\alpha_s\})$.
\end{lem}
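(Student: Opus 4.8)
The plan is to reduce the set equality $\Sigma D(\mathcal R_1)=s(\Sigma D(\mathcal R)\setminus\{\alpha_s\})$ to a bookkeeping computation by feeding the hypothesis into the explicit ``in particular'' formula for descent-roots in Proposition~\ref{cor:Key1}. I would use repeatedly the elementary facts that $s\in S_0$ restricts to an involution of $\Phi_0^+\setminus\{\alpha_s\}$ and satisfies $s(\delta-\beta)=\delta-s(\beta)$ for all $\beta\in\Phi_0^+$ (by Eq.~(\ref{eq:Ref2})), together with the identities $X(\mathcal R_1,\alpha_s)=0$ and $X(\mathcal R_1,\gamma)=X(\mathcal R,s(\gamma))$ for $\gamma\in\Phi_0^+\setminus\{\alpha_s\}$, which were recorded just above using Proposition~\ref{prop:UnionShi}.

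First I would note that, directly from the definitions, $Y^\alpha=X^\alpha_{\mathcal R_1}$, so Proposition~\ref{cor:Key1} applied to $\mathcal R_1$ (whose minimal element is $sw\in L_\shi$) writes $\Sigma D(\mathcal R_1)$ as the disjoint union of $\{\alpha\in\Phi_0^+\mid X(\mathcal R_1,\alpha)=-,\ Y^\alpha\in\mathcal{S}_{\Phi}\}$ and $\{\delta-\alpha\mid\alpha\in\Phi_0^+,\ X(\mathcal R_1,\alpha)=+,\ Y^\alpha\in\mathcal{S}_{\Phi}\}$. Since $X(\mathcal R_1,\alpha_s)=0$, the index $\alpha_s$ contributes to neither set, so in both we may assume $\alpha\in\Phi_0^+\setminus\{\alpha_s\}$. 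For such $\alpha$ I would substitute $X(\mathcal R_1,\alpha)=X(\mathcal R,s(\alpha))$ and, by the hypothesis of the lemma, replace the condition ``$Y^\alpha\in\mathcal{S}_{\Phi}$'' by ``$X^{s(\alpha)}\in\mathcal{S}_{\Phi}$''. Reindexing by $\beta=s(\alpha)$, which runs over $\Phi_0^+\setminus\{\alpha_s\}$ exactly as $\alpha$ does, and recalling that $X^{s(\alpha)}$ is the sign type denoted $X^{\beta}_{\mathcal R}$ in Proposition~\ref{cor:Key1}, the two sets become $\{s(\beta)\mid\beta\in\Phi_0^+\setminus\{\alpha_s\},\ X(\mathcal R,\beta)=-,\ X^{\beta}_{\mathcal R}\in\mathcal{S}_{\Phi}\}$ and $\{\delta-s(\beta)\mid\beta\in\Phi_0^+\setminus\{\alpha_s\},\ X(\mathcal R,\beta)=+,\ X^{\beta}_{\mathcal R}\in\mathcal{S}_{\Phi}\}$, and in the second set the restriction $\beta\neq\alpha_s$ is vacuous because $X(\mathcal R,\alpha_s)=-\neq+$.

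Finally, comparing with the ``in particular'' formula of Proposition~\ref{cor:Key1} for $\mathcal R$ itself: the first set above is $s\big((\Sigma D(\mathcal R)\cap\Phi_0^+)\setminus\{\alpha_s\}\big)$ (here the removal of $\alpha_s$ is genuine, since $\alpha_s\in\Sigma D(\mathcal R)$), while the second is $s\big(\Sigma D(\mathcal R)\cap(\delta-\Phi_0^+)\big)$, using $s(\delta-\beta)=\delta-s(\beta)$ and the injectivity of $s$. Since $\Sigma\subseteq\Phi_0^+\sqcup(\delta-\Phi_0^+)$ and $\alpha_s\in\Phi_0^+$, the union of these two is $s(\Sigma D(\mathcal R)\setminus\{\alpha_s\})$, as claimed. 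The only real point of care is the correct handling of the index $\alpha_s$ — that it lies in $\Sigma D(\mathcal R)$ but not in $\Sigma D(\mathcal R_1)$, and that no root of the form $\delta-\beta$ can equal $\alpha_s$, so that ``$\setminus\{\alpha_s\}$'' deletes only from the $\Phi_0^+$-part; beyond this the statement is a formal consequence of Propositions~\ref{cor:Key1} and~\ref{prop:UnionShi} and the standing hypothesis.
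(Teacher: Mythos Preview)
Your proof is correct and follows essentially the same approach as the paper: both arguments rest on Proposition~\ref{cor:Key1}, the identity $X(\mathcal R_1,\gamma)=X(\mathcal R,s(\gamma))$ for $\gamma\neq\alpha_s$ from Proposition~\ref{prop:UnionShi}, and the standing hypothesis. The only difference is organizational---the paper argues by double inclusion, chasing an element $\alpha$ (respectively $\delta-\alpha$) from $\Sigma D(\mathcal R_1)$ to $s(\Sigma D(\mathcal R)\setminus\{\alpha_s\})$ and back, while you rewrite the explicit description of $\Sigma D(\mathcal R_1)$ from Proposition~\ref{cor:Key1} directly via the substitution $\beta=s(\alpha)$; the content is the same.
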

\begin{proof} Let $\alpha\in \Phi_0^+\setminus\{\alpha_s\}$. Notice that $s(\alpha) \in \Phi_0^+\setminus\{\alpha_s\}$. 

We first show that $\Sigma D(\mathcal R_1)\subseteq s(\Sigma D(\mathcal R)\setminus \{\alpha_s\})$. Assume first that $\alpha\in \Sigma D(\mathcal R_1)$, i.e., $X(\mathcal R_1,\alpha)= -$, by Proposition~\ref{cor:Key1}. Then $Y^{\alpha}$ is admissible, so is $X^{s(\alpha)}$ by assumption. Therefore either $s(\alpha)$ or $\delta-s(\alpha)$ is in $\Sigma D(\mathcal R)\setminus\{\alpha_s\}$. Since $X(\mathcal R,s(\alpha))=X(\mathcal R_1,\alpha)= -$, we have $s(\alpha)\in \Sigma D(\mathcal R)\setminus\{\alpha_s\}$, by Proposition~\ref{cor:Key1} again. Therefore $\alpha \in s(\Sigma D(\mathcal R)\setminus \{\alpha_s\})$. If $\delta-\alpha\in \Sigma D(\mathcal R_1)$, then by similar arguments we show that $\delta-\alpha \in s(\Sigma D(\mathcal R)\setminus \{\alpha_s\})$. Hence $\Sigma D(\mathcal R_1)\subseteq s(\Sigma D(\mathcal R)\setminus \{\alpha_s\})$.

Assume now that $\alpha\in s(\Sigma D(\mathcal R)\setminus \{\alpha_s\})$. Therefore  $s(\alpha) \in \Sigma D(\mathcal R)\setminus \{\alpha_s\}$ and $-=X(\mathcal R,s(\alpha))=X(\mathcal R_1,\alpha)$. Hence $X^{s(\alpha)}$ is admissible, so is $Y^\alpha$ by assumption. Then $\alpha\in \Sigma D(\mathcal R_1)$. The case of $\delta-\alpha\in s(\Sigma D(\mathcal R)\setminus \{\alpha_s\})$ is similar. Finally, we get  $\Sigma D(\mathcal R_1)=s(\Sigma D(\mathcal R)\setminus \{\alpha_s\})$.
\end{proof}

Thanks to Lemma~\ref{lem1}, we only need to show now that, for any $\alpha\in \Phi_0^+\setminus\{\alpha_s\}$, we have  $Y^\alpha$ is admissible if and only if $X^{s(\alpha)}$ is admissible.

\begin{lem}\label{lem2} Let $\alpha\in \Phi_0^+\setminus\{\alpha_s\}$. The following statements are equivalent.
\begin{enumerate}[(i)]
\item $Y^\alpha$ is admissible if and only if $X^{s(\alpha)}$ is admissible;

\item for any rank~$2$ irreducible root subsystem $\Psi\subseteq \Phi_0$ such that $\alpha,\alpha_s\in\Psi$, we have  $(Y^\alpha_\gamma)_{\gamma\in \Psi^+}$ is admissible if and only if $(X^{s(\alpha)}_\gamma)_{\gamma\in \Psi^+}$ is admissible.
\end{enumerate}
\end{lem}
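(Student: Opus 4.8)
The plan is to run Shi's rank~$2$ reduction (Theorem~\ref{thm:Shi}) for each of the two sign types $Y^\alpha$ and $X^{s(\alpha)}$ separately, and then to transport one family of rank~$2$ restrictions onto the other by means of the reflection $s$. (When $\Phi_0$ itself has rank~$2$ the statement of the lemma is immediate, so I would assume $\Phi_0$ has rank at least $3$.)

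First I would record the two reductions. In the notation preceding Proposition~\ref{cor:Key1}, $Y^\alpha$ equals $X^\alpha_{\mathcal R_1}$ and $X^{s(\alpha)}$ equals $X^{s(\alpha)}_{\mathcal R}$; since $X(\mathcal R_1)$ and $X(\mathcal R)$ are admissible, Remark~\ref{rem:Descent} gives that $Y^\alpha$ is admissible if and only if $(Y^\alpha_\gamma)_{\gamma\in\Psi^+}$ is admissible for every irreducible rank~$2$ subsystem $\Psi\subseteq\Phi_0$ with $\alpha\in\Psi$, and that $X^{s(\alpha)}$ is admissible if and only if $(X^{s(\alpha)}_\gamma)_{\gamma\in\Psi^+}$ is admissible for every irreducible rank~$2$ subsystem $\Psi$ with $s(\alpha)\in\Psi$.

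Next I would compare these two index families through the involution $\Psi\mapsto s(\Psi)$, which matches irreducible rank~$2$ subsystems containing $\alpha$ with those containing $s(\alpha)$, splitting according to whether $\alpha_s\in\Psi$. If $\alpha_s\notin\Psi$, then $s$ restricts to an isomorphism of based root systems from $\Psi^+$ onto $(s\Psi)^+$: it is an isometry, hence length-preserving, and $\alpha_s\notin\Psi$ forces $s$ to keep the positive roots of $\Psi$ positive, so $s(\Psi^+)=(s\Psi)^+$; thus it transports Shi's list of admissible rank~$2$ sign types for $\Psi$ bijectively onto that for $s\Psi$. Using $X(\mathcal R_1,\gamma)=X(\mathcal R,s(\gamma))$ for $\gamma\neq\alpha_s$ (Proposition~\ref{prop:UnionShi}~(4)) together with $Y^\alpha_\alpha=0=X^{s(\alpha)}_{s(\alpha)}$, one checks $Y^\alpha_{s(\eta)}=X^{s(\alpha)}_{\eta}$ for all $\eta\in\Psi^+$, so this isomorphism identifies $(X^{s(\alpha)}_\eta)_{\eta\in\Psi^+}$ with $(Y^\alpha_\gamma)_{\gamma\in(s\Psi)^+}$, and the two restrictions are simultaneously admissible. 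If $\alpha_s\in\Psi$, then $s(\Psi)=\Psi$ and $\Psi$ contains $\alpha$, $\alpha_s$ and $s(\alpha)$ at once; these $\Psi$ are exactly the ones appearing in statement~(ii), and since $\alpha_s\in\Psi$ implies $s(\Psi)=\Psi$, one has $\alpha\in\Psi$ if and only if $s(\alpha)\in\Psi$, so the family is the same whether read from the side of $Y^\alpha$ or of $X^{s(\alpha)}$. Combining the two cases, the admissibility of $Y^\alpha$ is equivalent to that of $X^{s(\alpha)}$ if and only if, for every irreducible rank~$2$ subsystem $\Psi$ with $\alpha,\alpha_s\in\Psi$, the restrictions $(Y^\alpha_\gamma)_{\gamma\in\Psi^+}$ and $(X^{s(\alpha)}_\gamma)_{\gamma\in\Psi^+}$ are simultaneously admissible, which is precisely the equivalence of (i) and (ii).

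The step needing the most care is the claim that $s$, restricted to a rank~$2$ subsystem $\Psi$ with $\alpha_s\notin\Psi$, is an isomorphism of based root systems onto $s\Psi$ and hence respects Shi's admissible rank~$2$ sign types (Figures~\ref{fig:SA2},~\ref{fig:SB2},~\ref{fig:SG2}): one must be sure that $\alpha_s\notin\Psi$ really does force $s(\Psi^+)=(s\Psi)^+$ and that root lengths are respected. The remaining points—that a rank~$2$ subsystem containing $\alpha$ and $\alpha_s$ also contains $s(\alpha)$ and has $\alpha_s$ as a simple root, and the coordinate identities flowing from Proposition~\ref{prop:UnionShi}~(4)—are routine.
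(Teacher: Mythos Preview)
Your approach is essentially the paper's: reduce via Theorem~\ref{thm:Shi} to rank-$2$ subsystems and then transport the restrictions through the reflection $s$; your invocation of Remark~\ref{rem:Descent} at the outset, restricting attention to rank-$2$ subsystems containing $\alpha$ (resp.\ $s(\alpha)$), collapses the paper's four-case split into two and is a mild streamlining of the same argument.

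Two small remarks. First, in your transport paragraph the roles of $\Psi$ and $s\Psi$ get swapped midway; the identity you want (and which your computation actually gives) is $Y^\alpha_\gamma = X^{s(\alpha)}_{s(\gamma)}$ for $\gamma\in\Psi^+$ when $\alpha\in\Psi$ and $\alpha_s\notin\Psi$. Second, neither your argument nor the paper's actually establishes the direction (i)~$\Rightarrow$~(ii): writing $P$ for the conjunction of admissibility conditions over rank-$2$ subsystems with $\alpha_s\notin\Psi$ and $Q,Q'$ for the conjunctions over the remaining $\Psi$, what you obtain is $[(P\wedge Q)\Leftrightarrow(P\wedge Q')]$, and from this one cannot in general recover the per-$\Psi$ biconditionals $Q_\Psi\Leftrightarrow Q'_\Psi$. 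This does not matter downstream, since only (ii)~$\Rightarrow$~(i) is used (to feed the rank-$2$ verification of Lemma~\ref{ex:Rank2-bis} into Lemma~\ref{lem1}).
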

\begin{proof} (i) implies (ii) follows directly from Theorem~\ref{thm:Shi}. Assume (ii) to be true for all $\alpha\in \Phi_0^+$.
We describe first four cases that are valid for any rank 2  irreducible rank 2 subsystem $\Psi$ and any $\alpha\in \Phi_0^+\setminus\{\alpha_s\}$. Let $\Psi$ be a rank~$2$ irreducible root subsystem $\Psi\subseteq \Phi_0$ and $\alpha\in \Phi_0^+\setminus\{\alpha_s\}$.  

\medskip

\noindent {\bf Case 1.} If $\alpha,\alpha_s\in\Psi^+$, then $(Y^\alpha_\gamma)_{\gamma\in \Psi^+}$ is admissible if and only if $(X^{s(\alpha)}_\gamma)_{\gamma\in \Psi^+}$ is admissible by assumption.

\medskip

\noindent {\bf Case 2.} If $\alpha_s\in \Psi$ and $\alpha\notin\Psi$.  Then $s(\Psi)=\Psi$ since $\Psi$ a root system containing~$\alpha_s$. Therefore $s(\alpha)\notin \Psi$ since $s(\alpha) =\alpha-2\langle \alpha,\alpha_s^\vee\rangle\alpha_s$. By definition we have for all $\gamma \in \Psi\subseteq \Phi_0^+\setminus\{\alpha,s(\alpha)\}$:
$$
Y^\alpha_\gamma = X(\mathcal R_1,\gamma)\textrm{ and }X^{s(\alpha)}_\gamma = X(\mathcal R,\gamma).
$$
Therefore both $(Y^\alpha_\gamma)_{\gamma\in \Psi^+}$ and $(X^{s(\alpha)}_\gamma)_{\gamma\in \Psi^+}$ are admissible in this case.

\medskip

\noindent {\bf Case 3.} If $\alpha\in \Psi$ and $\alpha_s\notin\Psi$. Then $s(\Psi^+)\subseteq \Phi_0^+\setminus\{\alpha_s\}$ is the positive root system associated to root system $s(\Psi)$ and $s(\alpha)\in s(\Psi^+)$. By definition of $Y^\alpha$, we have $Y^\alpha_\alpha=0$ and for all $\gamma\in \Psi^+\setminus \{\alpha\}$:
$$
Y^\alpha_\gamma=X(\mathcal R_1,\gamma) = X(\mathcal R,s(\gamma)).
$$
Notice that $\gamma \in \Psi^+\setminus \{\alpha\} $ if and only if $s(\gamma)\in s(\Psi^+)\setminus \{s(\alpha)\}$. 
Now, by definition of $X^{s(\alpha)}$, we have $X^{s(\alpha)}_{s(\alpha)}=0$ and for all $\nu=s(\gamma)\in s(\Psi^+)\setminus \{s(\alpha)\}$, or in orther words for all $\gamma \in \Psi^+\setminus \{\alpha\} $, we have: 
$$
X^{s(\alpha)}_\nu=X(\mathcal R,\nu)= X(\mathcal R,s(\gamma))=Y^\alpha_\gamma. 
$$
Therefore the restriction of $Y^\alpha$ to $\Psi$ is admissible if and only if the restriction of $X^{s(\alpha)}$ to $s(\Psi)$ is admissible.

\medskip

\noindent {\bf Case 4.} If $\alpha\notin \Psi$ and $\alpha_s\notin\Psi$. Then $s(\alpha)\notin s(\Psi)$. By the same line of reasoning than in Case 3 we  have for all $\gamma\in \Psi^+\setminus$:
$$
Y^\alpha_\gamma=X(\mathcal R_1,\gamma) = X(\mathcal R,s(\gamma)).
$$
Moreover, for all $\nu=s(\gamma)\in s(\Psi^+)$, or in orther words for all $\gamma \in \Psi^+$, we have: 
$$
X^{s(\alpha)}_\nu=X(\mathcal R,\nu)= X(\mathcal R,s(\gamma))=Y^\alpha_\gamma. 
$$
Therefore the restriction of $Y^\alpha$ to $\Psi$ is admissible if and only if the restriction of $X^{s(\alpha)}$ to $s(\Psi)$ is admissible.

\medskip

\noindent {\bf Conclusion.}  By Theorem~\ref{thm:Shi}, we need to prove that for any rank~$2$ irreducible root subsystem $\Psi\subseteq \Phi_0$,  we have    $(Y^\alpha_\gamma)_{\gamma\in \Psi^+}$ is admissible if and only if $(X^{s(\alpha)}_\gamma)_{\gamma\in \Psi^+}$ is admissible.  Let $\alpha\in \Phi_0^+\setminus\{\alpha_s\}$. 

Assume first that $Y^\alpha$ is admissible. By Theorem~\ref{thm:Shi}, we have to show that the restriction of $X^{s(\alpha)}$ to any rank $2$ irreducible root subsystem $\Psi$ is admissible.  Let $\Psi$ be a rank $2$ irreducible root subsystem. If $\alpha_s\in \Psi$ then by Case 1   the restriction of $X^{s(\alpha)}$ to $\Psi$ is  admissible since $Y^\alpha$ is, and by Case 2,  the restriction of $X^{s(\alpha)}$ to $\Psi$ is  always admissible.  Assume now that $\alpha_s\notin \Psi$.  

If $s(\alpha)\in \Psi$, then we apply Case 3 with $s(\Psi)$ and $\alpha$. Therefore  the restriction of $Y^\alpha$ to $s(\Psi)$ is admissible if and only if the restriction of $X^{s(\alpha)}$ to $s(s(\Psi))=\Psi$ is admissible, which settles that case since $Y^\alpha$ admissible.  If $s(\alpha)\notin \Psi$, then we conclude by Case 4 with $s(\Psi)$ and $\alpha$. So $X^{s(\alpha)}$ is admissible. 

The same line of reasoning using Cases 1 to 4 shows that if  $X^{s(\alpha)}$ then $Y^\alpha$ is admissible. 
\end{proof}

Thanks to the above lemmas, Proposition~\ref{thm:SuffRegion} is a consequence of the following Lemma. 

\begin{lem}\label{ex:Rank2-bis} Let $\Phi_0$ be an irreducible  finite crystallographic root system of rank~$2$, that is, $\Phi_0$ is of type $A_2$, $B_2$ or $G_2$.  Set $\Delta_0=\{\alpha_1,\alpha_2\}$ and $S_0=\{s_1,s_2\}$. Let $i\in\{1,2\}$  and $\mathcal R$ be a Shi region such that $X(\mathcal R,\alpha_i)=-$. 
Let  $\mathcal R_1$, $\mathcal R_2$ be two distinct Shi regions such that $s(\mathcal R_1), s(\mathcal R_2)\subseteq \mathcal R$.  Assume that $X(\mathcal R_1,\alpha_i)=0$. Let $\alpha\in \Phi_0^+\setminus\{\alpha_i\}$, then $Y^{\alpha}$ is admissible if and only if $X^{s_i(\alpha)}$ is admissible.  
\end{lem}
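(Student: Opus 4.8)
The plan is to treat Lemma~\ref{ex:Rank2-bis} as the rank~$2$ base case underpinning the induction in Proposition~\ref{thm:SuffRegion}, and to settle it by a finite inspection of Shi's explicit classification of rank~$2$ admissible sign types (Theorem~\ref{thm:Shi}~(1), i.e. Figures~\ref{fig:SA2}, \ref{fig:SB2}, \ref{fig:SG2}, recorded in Tables~\ref{tab1} and~\ref{tab2}). The first step is a bookkeeping reduction. Write $s=s_i$. Since $s$ permutes $\Phi_0^+\setminus\{\alpha_i\}$ and $\alpha\in\Phi_0^+\setminus\{\alpha_i\}$, also $s(\alpha)\in\Phi_0^+\setminus\{\alpha_i\}$. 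Using $X(\mathcal R_1,\gamma)=X(\mathcal R,s(\gamma))$ for $\gamma\in\Phi_0^+\setminus\{\alpha_i\}$ (Proposition~\ref{prop:UnionShi}~(4)) together with the hypotheses $X(\mathcal R_1,\alpha_i)=0$ and $X(\mathcal R,\alpha_i)=-$, one checks directly that
\[
Y^\alpha_\gamma=X^{s(\alpha)}_{s(\gamma)}\qquad\text{for all }\gamma\in\Phi_0^+\setminus\{\alpha_i\},
\]
while $Y^\alpha_{\alpha_i}=0$ and $X^{s(\alpha)}_{\alpha_i}=-$. Thus $Y^\alpha$ and $X^{s(\alpha)}$ are carried into one another by the relabelling $\gamma\leftrightarrow s(\gamma)$ off the position $\alpha_i$, and differ only at $\alpha_i$, where one carries a $0$ and the other a $-$. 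So the lemma reduces to the assertion that, for each sign type arising in this way, switching the $\alpha_i$-entry between $-$ and $0$ preserves admissibility.

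The second step is to pin down which regions $\mathcal R$ actually occur. By Proposition~\ref{prop:UnionShi}, the hypotheses of the lemma — $X(\mathcal R,\alpha_i)=-$, the existence of two distinct Shi regions $\mathcal R_1,\mathcal R_2$ with $s\cdot\mathcal R_1,\,s\cdot\mathcal R_2\subseteq\mathcal R$, and $X(\mathcal R_1,\alpha_i)=0$ — hold exactly when the minimal element $w$ of $\mathcal R$ satisfies $k(w,\alpha_i)=-1$ and $\widetilde{\mathcal R_+^{s}}\neq\emptyset$; this singles out a short explicit family of regions in each of the three types, read off from Figures~\ref{fig:SA2}, \ref{fig:SB2}, \ref{fig:SG2} (see Tables~\ref{tab1} and~\ref{tab2}). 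For each such $\mathcal R$, Proposition~\ref{prop:UnionShi}~(4) determines $X(\mathcal R_1)$, hence $Y^\alpha$, from $X(\mathcal R)$; and for each of the (two, three, or five) roots $\alpha\in\Phi_0^+\setminus\{\alpha_i\}$ we compute $Y^\alpha$ and $X^{s(\alpha)}$ and compare them against the admissible-sign-type list. As $\Phi_0$ has rank~$2$, it is its only irreducible rank~$2$ root subsystem, so by Theorem~\ref{thm:Shi}~(1) each comparison is a direct lookup. Equivalently, using Proposition~\ref{cor:Key1}, the claim reads $\{\alpha,\delta-\alpha\}\cap\Sigma D(\mathcal R_1)\neq\emptyset$ if and only if $\{s(\alpha),\delta-s(\alpha)\}\cap\Sigma D(\mathcal R)\neq\emptyset$, which exhibits the lemma precisely as the rank~$2$ instance of Proposition~\ref{thm:SuffRegion} that powers the general case.

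The main obstacle is the bulk of the verification rather than any conceptual subtlety: the one genuinely delicate point is to enumerate correctly the regions $\mathcal R$ for which the decomposition of Proposition~\ref{prop:UnionShi} has both $\mathcal R_1$ and $\mathcal R_2$ nonempty, and type $G_2$ carries the most weight since there $\Phi_0^+\setminus\{\alpha_i\}$ has five elements and there are correspondingly more pairs $(\mathcal R,\alpha)$ to inspect. Once the relevant regions are tabulated, each individual case is mechanical — read off $X(\mathcal R)$, apply the identity above to obtain $Y^\alpha$, and confirm against Shi's rank~$2$ tables that $Y^\alpha$ and $X^{s(\alpha)}$ are simultaneously admissible or simultaneously inadmissible.
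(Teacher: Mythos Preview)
Your proposal is correct and follows essentially the same route as the paper: identify the finitely many regions $\mathcal R$ for which $s_i\cdot\mathcal R$ splits into two Shi regions (via Proposition~\ref{prop:UnionShi}), and for each such $\mathcal R$ and each $\alpha\in\Phi_0^+\setminus\{\alpha_i\}$ verify by direct lookup in the rank~$2$ tables (Figures~\ref{fig:SA2}, \ref{fig:SB2}, \ref{fig:SG2}) that $Y^\alpha$ and $X^{s_i(\alpha)}$ are simultaneously admissible, just as the paper records in Tables~\ref{tab1} and~\ref{tab2}. One small caveat: your first paragraph's claimed ``reduction'' is really only an organizing observation---the relabelling $\gamma\leftrightarrow s_i(\gamma)$ on $\Phi_0^+\setminus\{\alpha_i\}$ is not a priori admissibility-preserving, so the lemma does not literally reduce to a $0/{-}$ switch at $\alpha_i$ alone; but since you do not actually rely on that and proceed with the full case check, the argument stands.
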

\begin{proof} We have $X(\mathcal R_1,\alpha_i)=0$, $X(\mathcal R_2,\alpha_i)=+$ and $X(\mathcal R_j,\gamma)=X(\mathcal R, s_i(\gamma))$ for all $j=1,2$ and $\gamma\in \Phi_0^+\setminus\{\alpha_i\}$ by Proposition~\ref{prop:UnionShi}~(1). 
  We prove this lemma in type $A_2$ and $B_2$ and leave the case $G_2$ to the reader. The proof works as follow. Let $\alpha\in  \Phi_0^+\setminus\{\alpha_i\}$. We know, thanks to Proposition~\ref{cor:Key1}, how to identify the descent-roots of a Shi region within its sign type; see Examples~\ref{ex:SA2.2} and \ref{ex:SB2.2}. More precisely, in Figure~\ref{fig:SA2} and Figure~\ref{fig:SB2} we have:
  \begin{itemize}
  \item  a sign  $X(\mathcal R_1,\alpha)$  is colored in red if and only if  $Y^\alpha$ is admissible; 
  \item equivalently, a sign  $X(\mathcal R,s_i(\alpha))$  is colored in red if and only if  $X^{s_i(\alpha)}$ is admissible.
  \end{itemize}
  So in order to prove our claim and since $X(\mathcal R_j,\gamma)=X(\mathcal R, s_i(\gamma))$ for all $j=1,2$ and $\gamma\in \Phi_0^+\setminus\{\alpha_i\}$, it is enough in each case to:
\begin{enumerate}
\item list the Shi regions $\mathcal R$  such that $s_i\cdot \mathcal R_1, s_i\cdot \mathcal R_2\subseteq \mathcal R$ for two distinct Shi regions $\mathcal R_1$, $\mathcal R_2$;
\item check that, for all $\beta\in \Phi_0^+\setminus\{\alpha_i\}$, the sign $X(\mathcal R_1,\beta)$ is colored in red if and only if $X(\mathcal R,s_i(\beta))$ is also colored in red.
\end{enumerate}
The result of this case-by-case analysis, which confirms our statement,  is  provided in Table~\ref{tab1} and Table~\ref{tab2} below. Recall that $s_i(\Phi_0^+\setminus\{\alpha_i\})=\Phi_0^+\setminus\{\alpha_i\}$. These tables  are obtained with the help of Figure~\ref{fig:SA2} and Figure~\ref{fig:SB2}, with the  same notations. 
\end{proof}

\begin{center} 
\begin{tabular}{|c|c|cc|cc|}
\hline
 $s_i$  & $X(\mathcal R)$  & $X(\mathcal R_1)$ & $X(\mathcal R_2)$ & $\beta\in \Phi_0^+\setminus\{\alpha_i\}$ with  & $s_i(\beta)\in \Phi_0^+\setminus\{\alpha_i\}$ with\\
  & &  & &  $X(\mathcal R_1,\beta)$  red &$X(\mathcal R,s_i(\beta))$ red \\
  \hline \hline
  $s_1$ & $\begin{array}{c}\textcolor{red}{+}\\ \textcolor{red}{-} \quad +\end{array}$   &$\begin{array}{c}+\\ 0 \quad \textcolor{red}{+}\end{array}$ &$\begin{array}{c}+\\ \textcolor{red}{+} \quad \textcolor{red}{+}\end{array}$& $\alpha_2$& $\alpha_1+\alpha_2$\\
  \hline
  & $\begin{array}{c}0\\ - \quad \textcolor{red}{+}\end{array}$   & $\begin{array}{c}\textcolor{red}{+}\\ 0 \quad 0\end{array}$ & $\begin{array}{c}+\\ \textcolor{red}{+} \quad 0\end{array}$ & $\alpha_1+\alpha_2$& $\alpha_2$\\
  \hline
   & $\begin{array}{c}\textcolor{red}{-}\\ - \quad 0\end{array}$   & $\begin{array}{c}0\\ 0 \quad \textcolor{red}{-}\end{array}$ & $\begin{array}{c}0\\ \textcolor{red}{+} \quad -\end{array}$ & $\alpha_2$& $\alpha_1+\alpha_2$\\
     \hline
   & $\begin{array}{c}-\\ \textcolor{red}{-}\quad \textcolor{red}{-}\end{array}$   & $\begin{array}{c}\textcolor{red}{-}\\ 0 \quad -\end{array}$ & $\begin{array}{c} \textcolor{red}{-}\\ \textcolor{red}{+} \quad -\end{array}$ & $\alpha_1+\alpha_2$& $\alpha_2$\\
  \hline
  \hline
   $s_2$ & $\begin{array}{c}\textcolor{red}{+}\\ + \quad \textcolor{red}{-}\end{array}$   &$\begin{array}{c}+\\  \textcolor{red}{+}\quad 0 \end{array}$ &$\begin{array}{c}+\\ \textcolor{red}{+} \quad \textcolor{red}{+}\end{array}$& $\alpha_1$& $\alpha_1+\alpha_2$\\
  \hline
  & $\begin{array}{c}0\\  \textcolor{red}{+}\quad -\end{array}$  & $\begin{array}{c}\textcolor{red}{+}\\ 0 \quad 0\end{array}$ & $\begin{array}{c}+\\ 0 \quad \textcolor{red}{+} \end{array}$ & $\alpha_1+\alpha_2$& $\alpha_1$ \\
         \hline
   & $\begin{array}{c}\textcolor{red}{-}\\ 0\quad -\end{array}$   & $\begin{array}{c}0\\ \textcolor{red}{-}\quad 0\end{array}$ & $\begin{array}{c} 0\\- \quad  \textcolor{red}{+}\end{array}$ & $\alpha_1$& $\alpha_1+\alpha_2$\\
       \hline
   & $\begin{array}{c}-\\ \textcolor{red}{-}\quad \textcolor{red}{-}\end{array}$   & $\begin{array}{c}\textcolor{red}{-}\\ - \quad 0\end{array}$ & $\begin{array}{c} \textcolor{red}{-}\\- \quad  \textcolor{red}{+}\end{array}$ & $\alpha_1+\alpha_2$& $\alpha_1$\\
  \hline
\end{tabular}
  \captionof{table}{Type $A_2$}\label{tab1}
\end{center}
\medskip
\begin{center}
\begin{tabular}{|c|c|cc|cc|}
\hline
 $s_i$  & $X(\mathcal R)$  & $X(\mathcal R_1)$ & $X(\mathcal R_2)$ & $\beta\in \Phi_0^+\setminus\{\alpha_i\}$ with  & $s_i(\beta)\in \Phi_0^+\setminus\{\alpha_i\}$ with\\
  & &  & &  $X(\mathcal R_1,\beta)$  red &$X(\mathcal R,s_i(\beta))$ red \\
  \hline \hline
  $s_1$ & $\begin{array}{c} - \\ 0 \quad +\\ \textcolor{red}{+} \end{array}$ & $\begin{array}{c}0\\ + \quad 0\\ \textcolor{red}{+}\end{array}$ & $\begin{array}{c}\textcolor{red}{+}\\ + \quad 0\\ +\end{array}$ & $\alpha_1+\alpha_2$ & $\alpha_1+\alpha_2$  \\
  \hline
  & $\begin{array}{c} \textcolor{red}{-} \\ \textcolor{red}{+} \quad +\\ + \end{array}$   & $\begin{array}{c}0\\ + \quad \textcolor{red}{+}\\ +\end{array}$ & $\begin{array}{c}\textcolor{red}{+}\\ +\quad \textcolor{red}{+} \\ +\end{array}$ & $\alpha_2$ & $2\alpha_1+\alpha_2$\\
  \hline
    & $\begin{array}{c} - \\ \textcolor{red}{-} \quad \textcolor{red}{+}\\ 0 \end{array}$   & $\begin{array}{c}0\\ \textcolor{red}{+} \quad \textcolor{red}{-}\\ 0\end{array}$ & $\begin{array}{c}\textcolor{red}{+}\\ +\quad -\\ 0\end{array}$ & $\alpha_2$, $2\alpha_1+\alpha_2$ & $2\alpha_1+\alpha_2$, $\alpha_2$\\
     \hline
    & $\begin{array}{c} - \\ - \quad 0\\ \textcolor{red}{-} \end{array}$   & $\begin{array}{c}0\\ 0 \quad -\\ \textcolor{red}{-}\end{array}$ & $\begin{array}{c}\textcolor{red}{+}\\ 0\quad -\\ -\end{array}$ &  $\alpha_1+\alpha_2$ &  $\alpha_1+\alpha_2$\\
    \hline
        & $\begin{array}{c} \textcolor{red}{-} \\ - \quad \textcolor{red}{-}\\ - \end{array}$   & $\begin{array}{c}0\\ \textcolor{red}{-} \quad -\\ -\end{array}$ & $\begin{array}{c}\textcolor{red}{+}\\ \textcolor{red}{-}\quad -\\ -\end{array}$  &  $2\alpha_1+\alpha_2$&  $\alpha_2$\\
  \hline
  \hline
   $s_2$ & $\begin{array}{c} + \\ + \quad \textcolor{red}{-}\\ \textcolor{red}{+}\end{array}$  & $\begin{array}{c}\textcolor{red}{+}\\ + \quad 0\\ +\end{array}$ & $\begin{array}{c}\textcolor{red}{+}\\ +\quad \textcolor{red}{+}\\ +\end{array}$ & $\alpha_1$ & $\alpha_1+\alpha_2$ \\
  \hline
  & $\begin{array}{c} \textcolor{red}{+} \\ +\quad -\\ 0 \end{array}$   & $\begin{array}{c}0\\ + \quad 0\\ \textcolor{red}{+}\end{array}$ & $\begin{array}{c}0\\ +\quad \textcolor{red}{+} \\ +\end{array}$ & $\alpha_1+\alpha_2$ & $\alpha_1$\\
    \hline
  & $\begin{array}{c}0\\ 0\quad -\\  \textcolor{red}{-}  \end{array}$   & $\begin{array}{c}\textcolor{red}{-} \\ 0 \quad 0\\ 0\end{array}$ & $\begin{array}{c}-\\ 0\quad \textcolor{red}{+} \\ 0\end{array}$ & $\alpha_1$& $\alpha_1+\alpha_2$ \\
  \hline
   & $\begin{array}{c}0\\  \textcolor{red}{-} \quad -\\ - \end{array}$   & $\begin{array}{c}-\\  \textcolor{red}{-}  \quad 0\\ 0\end{array}$ & $\begin{array}{c}-\\  \textcolor{red}{-} \quad \textcolor{red}{+} \\ 0\end{array}$ & $2\alpha_1+\alpha_2$& $2\alpha_1+\alpha_2$ \\ 
     \hline
        & $\begin{array}{c} \textcolor{red}{-} \\ - \quad \textcolor{red}{-}\\ - \end{array}$   & $\begin{array}{c}-\\ - \quad 0\\ \textcolor{red}{-}\end{array}$ & $\begin{array}{c}-\\ -\quad \textcolor{red}{+}\\ \textcolor{red}{-}\end{array}$  &  $\alpha_1+\alpha_2$&  $\alpha_1$\\
  \hline
\end{tabular}
  \captionof{table}{Type $B_2$}\label{tab2}
\end{center}
\smallskip

 \subsection{Conclusion of the proof of Theorem~\ref{thm:main}} \label{ss:ProofMain}  In \S\ref{ss:ShiSmall}, we explain that it is enough to show that $L_\shi\subseteq L$ in order to prove Theorem~\ref{thm:main}. We already know that this statement is true in the dominant region:  $L^0_\shi= L^0$ by Corollary \ref{cor:LLshiDom}.

We first state the following corollary of the descent-wall theorem:  if a hyperplane in $\shi(W,S)$ is parallel to  a hyperplane $H_\alpha$ with $\alpha\in \Delta_0$ and is also in the basis of the inversion set $N^1(w)$ for $w\in L_\shi$, then this hyperplane must be a descent-wall of the alcove of $w$.  

\begin{cor}[of the descent-wall theorem]\label{lem:1} Let $w\in L_\shi$ and $s\in S_0$ such that $\delta-\alpha_s\in N^1(w)$. We have $\delta-\alpha_s \in ND_R(w)$. 
\end{cor}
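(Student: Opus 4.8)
The plan is to deduce the statement from the descent-wall theorem (Theorem~\ref{thm:ShiLDes}) together with the rank-$2$ reduction for descent-walls in Lemma~\ref{lem:Key1}. Write $\mathcal R$ for the Shi region of $w$. First I would record that $X(\mathcal R,\alpha_s)=+$: since $\delta-\alpha_s\in N^1(w)\subseteq N(w)$ and $\delta-\alpha_s\in\Sigma$, we get $\delta-\alpha_s\in N(w)\cap\Sigma=\Sigma(w)=\Sigma(\mathcal R)$ by Proposition~\ref{prop:ShiSmall}, and then Proposition~\ref{prop:Shi} gives $X(\mathcal R,\alpha_s)=+$. In particular the hypothesis of Lemma~\ref{lem:Key1} is met, so it suffices to verify condition $(\star)$ of that lemma for $\alpha_s$ and $\mathcal R$.

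The heart of the argument is the following observation: if $\beta\in\Phi_0^+$ is such that $\alpha_s+\beta\in\Phi_0^+$ and $X(\mathcal R,\alpha_s+\beta)=+$, then $X(\mathcal R,\beta)\neq-$. Suppose, for a contradiction, that $X(\mathcal R,\beta)=-$. Then $\beta$ is a small root with $X(\mathcal R,\beta)=-$, so $\beta\in\Sigma(\mathcal R)=\Sigma(w)\subseteq N(w)$ by Propositions~\ref{prop:Shi} and~\ref{prop:ShiSmall}; and $\delta-(\alpha_s+\beta)$ is a small root (as $\alpha_s+\beta\in\Phi_0^+$) with $X(\mathcal R,\alpha_s+\beta)=+$, hence $\delta-(\alpha_s+\beta)\in\Sigma(\mathcal R)=\Sigma(w)\subseteq N(w)$ as well. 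But then the root identity
$$
\delta-\alpha_s=\beta+\bigl(\delta-(\alpha_s+\beta)\bigr)
$$
exhibits $\delta-\alpha_s$ as a sum of two elements of $N(w)\subseteq\cone(N(w))$. Since $\beta\in\Phi_0^+\subseteq V_0$ is not a nonnegative multiple of $\delta-\alpha_s$ (their $\delta$-components are $0$ and $1$), the ray $\mathbb R(\delta-\alpha_s)$ cannot be an extreme ray of $\cone(N(w))$, contradicting $\delta-\alpha_s\in N^1(w)$. This proves the observation, which is precisely condition $(\star)$.

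Finally, since $\alpha_s$ is a simple root of every rank-$2$ irreducible root subsystem $\Psi\subseteq\Phi_0$ containing it, and such $\Psi$ are closed under addition of roots, the observation above shows that $(\star)$ holds for every such $\Psi$. Lemma~\ref{lem:Key1} then gives that $H_{\delta-\alpha_s}$ is a descent-wall of $\mathcal R$, i.e. $\delta-\alpha_s\in\Sigma D(\mathcal R)$, and the descent-wall theorem $\Sigma D(\mathcal R)=ND_R(w)$ yields $\delta-\alpha_s\in ND_R(w)$. I expect the only genuinely delicate point to be the bookkeeping that translates the sign-type data $X(\mathcal R,\cdot)$ into membership in $N(w)$ via small roots, together with the recognition that the elementary decomposition $\delta-\alpha_s=\beta+(\delta-(\alpha_s+\beta))$ is incompatible with $\delta-\alpha_s$ spanning an extreme ray of $\cone(N(w))$; the rest is a direct chain of citations.
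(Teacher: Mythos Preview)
Your proposal is correct and follows essentially the same route as the paper's own proof: both invoke the descent-wall theorem $ND_R(w)=\Sigma D(\mathcal R)$ and Lemma~\ref{lem:Key1}, and both derive a contradiction from the decomposition $\delta-\alpha_s=\beta+(\delta-(\alpha_s+\beta))$ with $\beta,\delta-(\alpha_s+\beta)\in N(w)$, which is incompatible with $\delta-\alpha_s$ spanning an extreme ray of $\cone(N(w))$. The only cosmetic difference is that the paper argues by contradiction (assuming $\delta-\alpha_s\notin\Sigma D(\mathcal R)$ and extracting a violating $\beta$ from the contrapositive of Lemma~\ref{lem:Key1}), whereas you verify condition $(\star)$ directly; you are also a bit more explicit about why $X(\mathcal R,\alpha_s)=+$ and why the two summands are not proportional.
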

\begin{proof} Let $\mathcal R$ be the Shi region associated to $w$.  By Theorem~\ref{thm:ShiLDes} we know that $ND_R(w)= \Sigma D(\mathcal R)$.  Assume  that $\delta-\alpha_s \notin  \Sigma D(\mathcal R)$. Then, by Lemma~\ref{lem:Key1}, there is an irreducible root  subsystem $\Psi$ of rank $2$ in $\Phi_0$ such that $\alpha_s\in \Psi$ and there is $\beta \in\Psi$ with $\alpha_s+\beta\in \Psi$, $X(\mathcal R,\alpha_s+\beta) = +$ and $X(\mathcal R,\beta)=-$. So $\delta-(\alpha_s+\beta),\beta \in N(w)$ by Eq.~(\ref{eq:Xw}). Therefore
$
\delta-\alpha_s = \delta-(\alpha_s+\beta)+\beta,
$
which contradicts the fact that  $\delta-\alpha_s\in N^1(w)$ spans  a ray of $\cone(N(w))$. So $\delta-\alpha_s \notin  \Sigma D(\mathcal R)=ND_R(w)$.
\end{proof}

\begin{proof}[Proof of Theorem~\ref{thm:main}] Let $w \in L_\shi$. We decompose $w=uv$ with $u\in W_0$ and $v \in {}^0W$. We show by induction on $\ell(u)$ that $w\in L$. 

Assume first that $\ell(u)=0$, then $u=e$ and $w=v\in L_\shi^0=L^0$ by  Proposition~\ref{cor:LLshiDom}. Now assume that $\ell(u)\geq1$. Then there is $s\in D_L(u)\subseteq D_L(w)\cap S_0$. So $w'=u'v=sw\in L_\shi$ by Proposition~\ref{prop:ShiS}. By induction we have $sw\in L$, since $\ell(u')<\ell(u)$. 

We know that $\alpha_s\notin N(sw)$ so $\alpha_s\notin N^1(sw)$. Assume $\delta-\alpha_s\in N^1(sw)$, then, by Corollary~\ref{lem:1}, $\delta-\alpha_s\in ND_R(sw)$. Since $ND_R(sw)\subseteq s(ND_R(w))$ by Proposition~\ref{prop:Desc}, we have $\delta+\alpha_s=s(\delta-\alpha_s) \in ND_R(w)$. Since $w\in L_\shi$ we obtain by Proposition~\ref{prop:LLLshi}  that $ND_R(w)\subseteq \Sigma$,  which  contradicts $\Sigma=\{\alpha,\delta-\alpha\mid \alpha\in \Phi_0^+\}$. Therefore 
$\delta-\alpha_s\notin N^1(sw)$ and 
$
N^1(sw)\subseteq \Sigma\setminus\{\alpha_s,\delta-\alpha_s\}.
$
By Proposition~\ref{prop:S0}, we obtain therefore that $s(N^1(sw))\subseteq \Sigma\setminus\{\alpha_s,\delta-\alpha_s\}$. We conclude by \cite[Theorem 4.10]{DyHo16} that: 
$$
N^1(w)\subseteq \{\alpha_s\} \cup s(N^1(sw))\subseteq  \{\alpha_s\} \cup(\Sigma\setminus\{\alpha_s,\delta-\alpha_s\})\subseteq \Sigma.
$$
In other words,  $w\in L$.
\end{proof}

\subsection*{Acknowledgment} The authors warmly thank Antoine Abram, Balthazar Charles and  Nathan Williams for numerous interesting discussions on the subject.  The authors are also grateful to Nathan Williams for having pointed us the results in \cite{CePa00,CePa02}. The second author thanks Fran\c cois Bergeron for the references on diagonal invariants.  

Finally we are indebted to the anonymous referees:  their thorough reading of the first version of this article and the comments and suggestions made helped us to considerably improve the content of this article.

\bibliographystyle{plain}

\begin{figure}
\includegraphics[angle=270,scale=0.45]{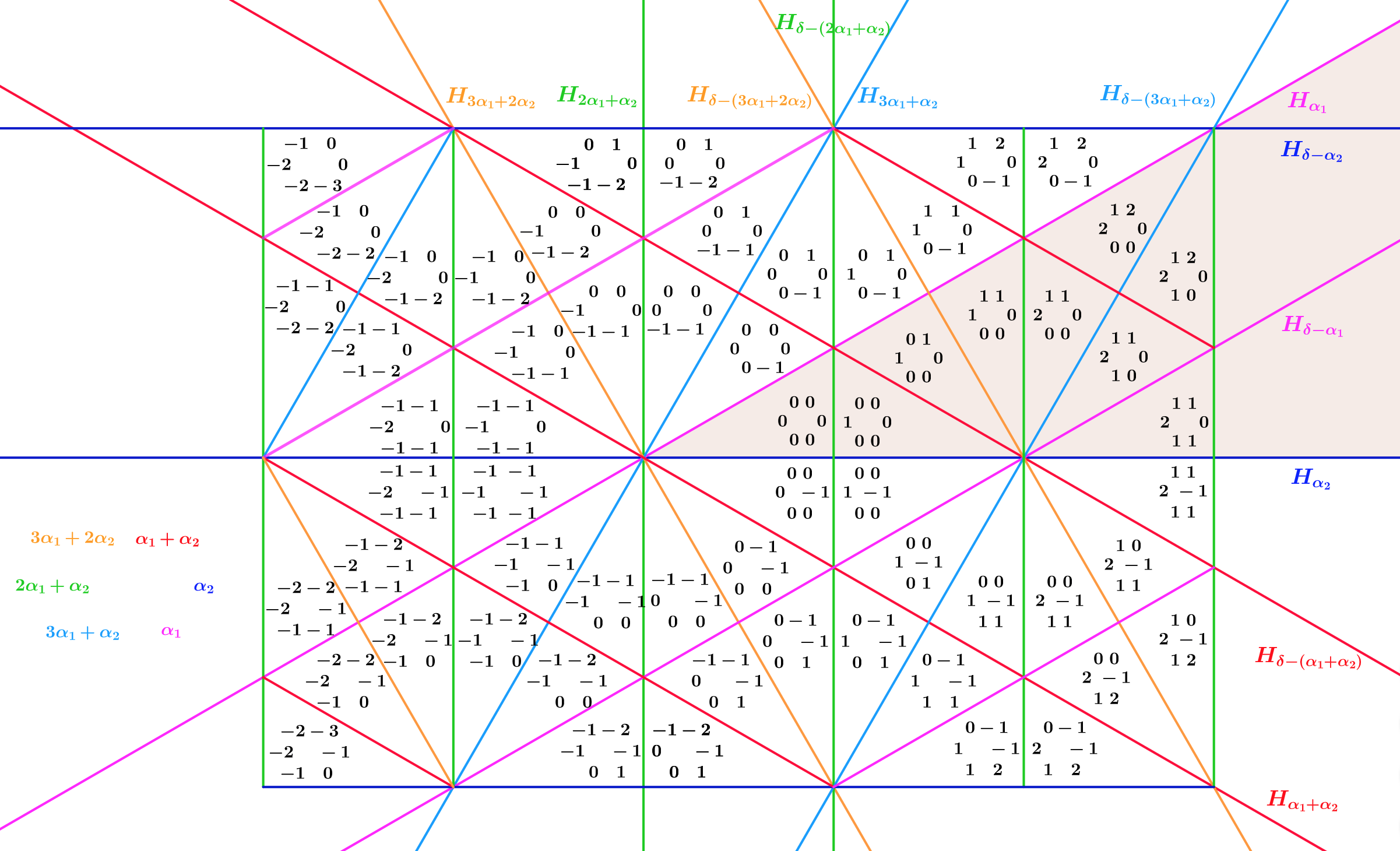}
\caption{{The Coxeter arrangement of type $\tilde G_2$. 
Each alcove is labelled with its Shi parameterization. 
The labels $k(w,\alpha)$ for $\alpha\in \Phi_0^+$ are indicated in each alcove with the parameterization of the finite root system given at the top of the figure. The long roots are 
$
\alpha_2,\ 3\alpha_1+2\alpha_2=s_1(\alpha_2) \textrm{ and } 3\alpha_1+\alpha_2=s_2s_1(\alpha_2).
$ The shaded region is the dominant region of $\mathcal A(W,S)$, which is also the fundamental chamber for the finite Weyl group $W_0$.}}
\label{fig:G2}
\end{figure}

\begin{figure}
\includegraphics[angle=270,scale=1]{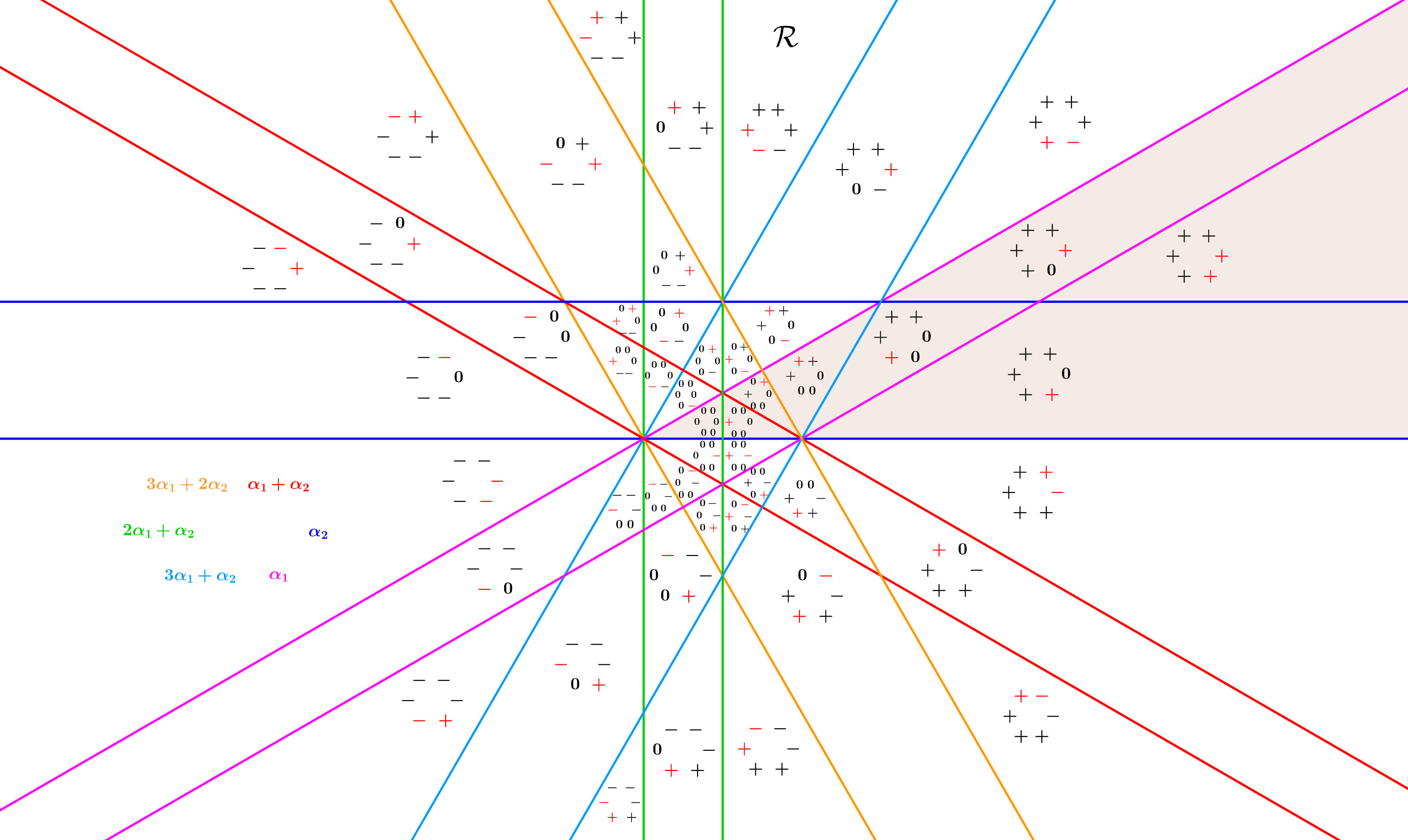}
\caption{The Shi arrangement of type $\tilde G_2$. Each Shi region is labelled with its admissible sign type. 
The labels $X(\mathcal R,\alpha)$ for a Shi region $\mathcal R$ are indicated in each alcove with the parameterization given at the lefthand side of the figure. The shaded region is the dominant region  $C_\circ$ of $\mathcal A(W,S)$. The  signs  colored in red indicate the  descent-roots of the corresponding Shi region.}
\label{fig:SG2}
\end{figure}

\end{document}